\pgfplotsset{compat=newest}
\newcommand{\adj}{\bigcup_{\mathcal{F}}}
\newcommand{\adjg}{\bigcup_{\mathcal{G}}}
\tikzset{
  curarrow/.style={
  rounded corners=8pt,
  execute at begin to={every node/.style={fill=red}},
    to path={-- ([xshift=50pt]\tikztostart.center)
    |- (#1) node[fill=white] {$\scriptstyle \delta^*$}
    -| ([xshift=-50pt]\tikztotarget.center)
    -- (\tikztotarget)}
    }
}
\newtheorem{theorem}{Theorem}[section]
\newtheorem{definition}[theorem]{Definition}  
\newtheorem{lemma}[theorem]{Lemma}   
\newtheorem{corollary}[theorem]{Corollary}
\newtheorem{remark}[theorem]{Remark}
\title{Vector Bundles over non-Hausdorff Manifolds}
\author{David O'Connell}
\date{\small{\textit{Okinawa Institute of Science and Technology \\
1919-1 Tancha, Onna-son, Okinawa 904-0495, Japan}}} 
\begin{document}

\maketitle

\begin{abstract}
In this paper we generalise the theory of real vector bundles to a certain class of non-Hausdorff manifolds. In particular, it is shown that every vector bundle fibred over these non-Hausdorff manifolds can be constructed as a colimit of standard vector bundles. We then use this description to introduce various formulas that express non-Hausdorff structures in terms of data defined on certain Hausdorff submanifolds. Finally, we use Čech cohomology to classify the real non-Hausdorff line bundles. 
\end{abstract}

\section*{Introduction}

The standard theory of differential geometry assumes that any pair of points in a manifold can be separated by disjoint open sets. This, known as the Hausdorff property, is typically imposed for technical convenience. Indeed, it can be shown that any Hausdorff, locally-Euclidean, second-countable topological space necessarily admits partitions of unity subordinate to any open cover. In turn, these partitions of unity can then be used to construct some familiar features of a manifold. \\

Conversely, non-Hausdorff manifolds will always have open covers that cannot admit partitions of unity \cite{oconnell2023non}. Despite this inconvenience, non-Hausdorff manifolds seem to arise within several areas of mathematics and theoretical physics. Within mathematics, one can find non-Hausdorff manifolds in the leaf spaces of foliations \cite{haefliger1957varietes, gauld2014non}, and in the spectra of certain $C^*$ algebras \cite{deeley2022fell}. Within theoretical physics, some general discussion of non-Hausdorff spacetimes can be found in \cite{penrose1979singularities, visser1995lorentzian, heller2011geometry}, and non-Hausdorff manifolds can be found in the maximal extension of Taub-NUT spacetimes \cite{hajicek1970extensions}, in certain twistor spaces \cite{woodhouse1988geroch}, and to various degrees in the study of branching spacetimes \cite{muller2013generalized, luc2020interpreting, OConnellthesis, hajicek1971causality, earman2008pruning, belnap2021branching}. \\

Despite occurring in both mathematics and physics, a general theory of non-Hausdorff geometry is lacking. In \cite{oconnell2023non}, initial steps were made with a study of the topological properties of non-Hausdorff manifolds. The key idea underpinning their study were the observations of \cite{haefliger1957varietes, OConnellthesis, hajicek1971causality, luc2020interpreting}, which suggest that non-Hausdorff manifolds can always be constructed by gluing together ordinary Hausdorff manifolds along open subspaces. This gluing operation is commonly known as an \textit{adjunction space}, though it may also be seen as the topological colimit of a particular diagram. The goal of this paper is to extend the formalism of \cite{oconnell2023non} to include both smooth manifolds and the vector bundles fibred over them. \\ 

This paper is organised as follows. In Section 1, we start by recalling some details of \cite{oconnell2023non}, and by making some important restrictions on the types of non-Hausdorff manifolds that we will consider. Once this is done, we will show that our non-Hausdorff manifolds naturally inherit smooth structures from the Hausdorff submanifolds that comprise them. We will then describe the ring of smooth real-valued functions, and use a modified version of partitions of unity to extend functions from Hausdorff submanifolds into the ambient non-Hausdorff space. We will show that, due to the contravariance of the $C^\infty$ functor, the space of functions of a non-Hausdorff manifold can be seen as the \textit{fibred product} of the functions defined on each of the Hausdorff submanifolds. \\

In Section 2 we will begin to consider the vector bundles that one may fiber over a non-Hausdorff manifold. In a direct analogy to the theorems of \cite{luc2020interpreting} and \cite{oconnell2023non}, we will show that every vector bundle over a non-Hausdorff manifold can be realised as a colimit of bundles fibred over the Hausdorff submanifolds. We will then use this result to describe the space of sections of a non-Hausdorff vector bundle. A similar fibred-product description will emerge, this time due to the contraviance of the $\Gamma$ functor. This description will then be used to construct Riemmannian metrics on our non-Hausdorff manifolds. \\

In Section 3 we will explore the prospect of classifying the line bundles over a non-Hausdorff manifold. We will do this by appealing to the well-known result that real line bundles over Hausdorff manifolds can be counted by the first Čech cohomology group with $\mathbb{Z}_2$ coefficients. As such, we will first provide descriptions for the Čech cohomology groups of our non-Hausdorff manifolds as Mayer-Vietoris sequences built from the cohomologies of certain Hausdorff submanifolds. We will see that, despite the Čech functor $\check{H}$ being contravariant, in general there is no fibred product description for the cohomology groups of our non-Hausdorff manifolds. This result suggests that a general formula for the exact number of line bundles over a non-Hausdorff manifold is probably not possible. However, in the case that all of the spaces under consideration are connected, we will derive an alternating-sum formula for the number of line bundles that a non-Hausdorff manifold admits. \\

Throughout this paper we will assume familiarity with differential geometry up to the level of \cite{lee2013smooth} or \cite{taubes2011differential}. In Section 3, unless otherwise stated, all results regarding the Čech cohomology are taken from \cite{bott1982differential}. We will assume that all manifolds, Hausdorff or otherwise, are locally-Euclidean, second-countable topological spaces, and as a convention we will use boldface notation to denote non-Hausdorff manifolds and their functions. 

\section{Smooth non-Hausdorff Manifolds}

In this section we will introduce a formalism for smooth non-Hausdorff manifolds. We will start with the relevant topological features, and then we will use these to endow our manifolds with smooth structures. We will then discuss smooth real-valued functions on non-Hausdorff manifolds.   

\subsection{The Topology of non-Hausdorff Manifolds}
We will now briefly review the topological properties of non-Hausdorff manifolds. All facts in this section are stated without proof, since they can already be found in \cite{oconnell2023non}, albeit in a slightly more general form. \\

We begin by generalising the adjunction spaces found in many standard texts such as \cite{hatcher2002algebraic} or \cite{brown2006topology}. Our data $\mathcal{F}$ will consist of three key components: a set $\textsf{M}=\{ M_i \}_{i \in I}$ of $d$-dimensional Hausdorff topological manifolds, a set $\textsf{A} = \{M_{ij}\}_{i,j \in I}$ of bi-indexed submanifolds satisfying $M_{ij} \subseteq M_i$, and a set $\textsf{f} = \{ f_{ij} \}_{i,j \in I}$ of continuous maps of the form $f_{ij}:M_{ij} \rightarrow M_j$. In order to ensure that this data induces a well-defined topological space, we will need to impose some consistency conditions. This is captured in the following definition. 
\begin{definition}\label{DEF: Adjunctive System}
The triple $\mathcal{F} = (\textsf{M}, \textsf{A}, \textsf{f})$ is called an adjunction system if it satisfies the following conditions for all $i,j \in I$.
\begin{enumerate}[itemsep=0.7mm]\setlength{\itemindent}{1em}
    \item[\textbf{A1)}] $M_{ii} = M_i$ and $f_{ii} = id_{M_i}$
    \item[\textbf{A2)}] $M_{ji} = f_{ij}(M_{ij})$, and $f_{ij}^{-1} = f_{ji}$
    \item[\textbf{A3)}]  $f_{ik}(x) = f_{jk} \circ f_{ij}(x)$ for each $x\in M_{ij}\cap M_{ik}$.
\end{enumerate}
\end{definition}
\noindent Given an adjunctive system $\mathcal{F}$, we define the \textit{adjunction space subordinate to} $\mathcal{F}$ to be the quotient of the disjoint union: 
$$ \bigcup_{\mathcal{F}}M_i := \faktor{\left(\bigsqcup_{i\in I} M_i\right)}{\sim}$$
where $\sim$ is the equivalence relation that identifies elements $(x,i)$ and $(y,j)$ of the disjoint union whenever $f_{ij}(x) = y$. Observe that the conditions of Definition \ref{DEF: Adjunctive System} are precisely the conditions needed to ensure that $\sim$ is a well-defined equivalence relation. 
Points in the adjunction space are equivalence classes of elements of each of the spaces $M_i$. We will denote these classes by $[x,i]$, that is, 
$$  [x,i] := \Big\{ (y,j) \in \bigsqcup_{i\in I} M_i \ \big| \ y=f_{ij}(x)   \Big\}.  $$
By construction there exists a collection of continuous maps $\phi_i: M_i \rightarrow \bigcup_{\mathcal{F}} M_i$ that send each $x$ in $M_i$ to its equivalence class $[x,i]$ in the adjunction space. \\

In the case that an adjunction system $\mathcal{F}$ has an indexing set of size $2$ we will refer to $\adj M_i$ as a \textit{binary} adjunction space. It is well-known that these spaces can be equivalently seen as the pushout of the diagram
$$      M_2 \xleftarrow{ \ \ \ f_{12}\ \ \ } M_{12} \xrightarrow { \ \ \ \iota_{12}\ \ \ } M_1 $$
in the category of topological spaces \cite{brown2006topology}. Similarly, it can be shown that general adjunction spaces are colimits of the diagram formed from the data in $\mathcal{F}$, with the maps $\phi_i$ satisfying the analogous universal property.

\begin{lemma}\label{LEM: universal property of adjunction space}
    Let $\mathcal{F}$ be an adjunction system and let $\adj M_i$ be the adjunction space subordinate to $\mathcal{F}$. Suppose that $\varphi_i: M_i \rightarrow X$ is a collection of continuous maps such that $\varphi_i = \varphi_j\circ f_{ij}$ for every $i,j$ in $I$. Then there is a unique continuous map $\alpha: \adj M_i \rightarrow X$ and $\varphi_i = \alpha \circ \phi_i$ for all $i$ in $I$.
\end{lemma}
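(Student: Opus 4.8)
The plan is to establish this as the universal property of a colimit, which for quotient-of-disjoint-union constructions reduces to a concrete verification. Since $\adj M_i$ is by definition the quotient $(\bigsqcup_i M_i)/\!\sim$, I would first exploit the universal property of the disjoint union and of the quotient topology, which are standard. The compatibility condition $\varphi_i = \varphi_j \circ f_{ij}$ is exactly what is needed to make the maps descend through the equivalence relation $\sim$, so the proof is essentially a matter of carefully defining $\alpha$ on equivalence classes and checking it is well-defined, continuous, and unique.

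Concretely, I would proceed in four steps. First, assemble the $\varphi_i$ into a single continuous map $\varphi: \bigsqcup_{i\in I} M_i \to X$ out of the disjoint union, using that each $\varphi_i$ is continuous. Second, define $\alpha([x,i]) := \varphi_i(x)$ and verify that this is \emph{well-defined} on equivalence classes: if $[x,i] = [y,j]$ then by definition $y = f_{ij}(x)$, and the hypothesis $\varphi_i = \varphi_j \circ f_{ij}$ gives $\varphi_i(x) = \varphi_j(f_{ij}(x)) = \varphi_j(y)$, so $\alpha$ does not depend on the chosen representative. Here one should also check consistency against conditions \textbf{A1}--\textbf{A3}, though this is automatic once $\sim$ is known to be an equivalence relation, as noted after Definition \ref{DEF: Adjunctive System}. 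Third, establish continuity of $\alpha$: since the quotient map $q: \bigsqcup_i M_i \to \adj M_i$ is a topological quotient and $\alpha \circ q = \varphi$ is continuous, the universal property of the quotient topology yields that $\alpha$ is continuous. Fourth, the relation $\varphi_i = \alpha \circ \phi_i$ is immediate from the definition, since $\phi_i(x) = [x,i]$ and $\alpha([x,i]) = \varphi_i(x)$.

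For uniqueness, suppose $\beta: \adj M_i \to X$ is another continuous map with $\varphi_i = \beta \circ \phi_i$ for all $i$. Every point of the adjunction space has the form $[x,i] = \phi_i(x)$ for some $i$ and some $x \in M_i$, because $q$ is surjective and each element of $\bigsqcup_i M_i$ lies in the image of some $\phi_i$. Then $\beta([x,i]) = \beta(\phi_i(x)) = \varphi_i(x) = \alpha([x,i])$, so $\beta = \alpha$, giving uniqueness.

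I do not expect any single step to be a serious obstacle, since this is the standard verification that a quotient-of-coproduct realises a colimit; the only point requiring genuine care is the well-definedness of $\alpha$, where one must invoke the compatibility hypothesis correctly and confirm that the identification performed by $\sim$ is exactly matched by the agreement of the $\varphi_i$. A minor subtlety worth flagging is that the equivalence relation identifies $(x,i)$ with $(y,j)$ precisely when $y = f_{ij}(x)$, so one should make sure the well-definedness argument covers chains of identifications; this is handled by the fact that $\sim$ is already established to be a genuine equivalence relation via \textbf{A1}--\textbf{A3}, so transitivity of $\sim$ combined with the compatibility condition suffices.
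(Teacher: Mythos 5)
Your proof is correct and complete: the well-definedness check via the compatibility condition $\varphi_i = \varphi_j \circ f_{ij}$, continuity via the universal property of the quotient topology, and uniqueness via surjectivity of the canonical maps are exactly the standard verification that the adjunction space is the colimit of the diagram $\mathcal{F}$. The paper itself states this lemma without proof (deferring to the cited reference \cite{oconnell2023non}), so there is no in-paper argument to compare against, but your reasoning is the expected one and contains no gaps.
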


With this in mind, we will regularly borrow language from category theory and refer to an adjunctive system and its subordinate adjunction space as a \textit{diagram} and its \textit{colimit}, respectively. \\

The canonically-induced maps $\phi_i$ can be particularly well-behaved, provided that we make some extra assumptions on the data in $\mathcal{F}$. The following result makes this precise. 

\begin{lemma}\label{LEM: fij and Aij open implies phi maps open and M gen manifold}
Let $\mathcal{F}$ be an adjunction system in which the gluing regions $M_{ij}$ are all open submanifolds and the maps $f_{ij}$ are open topological embeddings. Then
\begin{enumerate}
    \item the maps $\phi_i$ are all open topological embeddings, and
    \item the adjunction space subordinate to $\mathcal{F}$ is locally-Euclidean and second-countable.
\end{enumerate}
\end{lemma}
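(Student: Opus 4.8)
The plan is to prove the two claims in sequence, using the first to help establish the second. I would begin by analyzing the structure of the quotient map and the canonical maps $\phi_i$ directly from the definition of the equivalence relation.

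\medskip

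First, to show each $\phi_i$ is an open embedding, I would argue as follows. \emph{Injectivity of $\phi_i$:} Suppose $\phi_i(x) = \phi_i(x')$ for $x,x' \in M_i$. This means $[x,i]=[x',i]$, so $(x,i)\sim(x',i)$, which by the definition of $\sim$ forces $x' = f_{ii}(x) = x$ by condition \textbf{A1}. Hence $\phi_i$ is injective. \emph{Openness of $\phi_i$:} Let $q:\bigsqcup_k M_k \to \adj M_i$ be the quotient map, and let $U \subseteq M_i$ be open. By definition of the quotient topology, $\phi_i(U)$ is open if and only if $q^{-1}(\phi_i(U))$ is open in the disjoint union, i.e.\ its intersection with each $M_j$ is open in $M_j$. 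That intersection is precisely the set of points in $M_j$ that are glued to some point of $U$, which I would identify as $f_{ij}(U \cap M_{ij})$. Since $M_{ij}$ is open and $f_{ij}$ is an open embedding, $f_{ij}(U \cap M_{ij})$ is open in $M_j$. Therefore $\phi_i(U)$ is open, so $\phi_i$ is an open map; being a continuous open injection onto its image, it is a topological embedding.

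\medskip

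Second, for the locally-Euclidean and second-countable claims, I would use the first part together with the fact that the images $\phi_i(M_i)$ cover $\adj M_i$. \emph{Locally-Euclidean:} Any point of $\adj M_i$ lies in some $\phi_i(M_i)$, which is open and homeomorphic to $M_i$ via the embedding; since each $M_i$ is a $d$-dimensional Hausdorff manifold and hence locally-Euclidean, the adjunction space is locally-Euclidean of the same dimension. \emph{Second-countable:} Each $M_i$ is second-countable, so has a countable basis $\mathcal{B}_i$; the images $\{\phi_i(B) : B \in \mathcal{B}_i\}$ are open by part (1), and I would check that $\bigcup_i \{\phi_i(B) : B \in \mathcal{B}_i\}$ forms a basis for the topology on $\adj M_i$. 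Provided the indexing set $I$ is countable—which I would note is part of the standing assumptions for these spaces to be manifolds—this is a countable union of countable collections, hence countable.

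\medskip

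The main obstacle I anticipate is the openness argument in part (1): correctly identifying $q^{-1}(\phi_i(U)) \cap M_j$ with $f_{ij}(U \cap M_{ij})$ requires careful bookkeeping with the equivalence relation, and verifying that the gluings encoded by conditions \textbf{A2} and \textbf{A3} do not produce extra identifications that would enlarge this preimage. In particular, I would need to confirm that a point $y \in M_j$ is glued to some $x \in U$ exactly when $y = f_{ij}(x)$ for $x \in U \cap M_{ij}$, using \textbf{A2} to guarantee $f_{ij}$ is a bijection onto $M_{ji}$ with inverse $f_{ji}$, so that no indirect chains of identifications (mediated by a third manifold $M_k$ via \textbf{A3}) enlarge the preimage beyond this direct image. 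Once this identification is secured, the remaining topological claims follow routinely.
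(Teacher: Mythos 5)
Your proof is correct. The paper itself states this lemma without proof, deferring to \cite{oconnell2023non}, so there is no in-text argument to compare against; your argument is the standard one and fills that gap properly. In particular, you correctly identify the two points that need care: the computation $q^{-1}(\phi_i(U)) \cap M_j = f_{ij}(U \cap M_{ij})$ is exactly right because conditions \textbf{A1}--\textbf{A3} make the relation $\sim$ already reflexive, symmetric and transitive as stated (so no longer chains of identifications arise), and second-countability does require the indexing set $I$ to be countable, a hypothesis the paper leaves implicit here but which you appropriately flag.
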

Due to the above, we will often refer to the $\phi_i$ as the \textit{canonical embeddings}. We will also use $\textbf{M}, \textbf{N}, ...$ to denote adjunction spaces subordinate to any system satisfying the assumptions of Lemma \ref{LEM: fij and Aij open implies phi maps open and M gen manifold}. Given that each canonical embedding $\phi_i$ acts as a homeomorphism, we will often simplify notation and identify each $M_i$ with its image $\phi_i(M_i)$. \\

According to Lemma \ref{LEM: fij and Aij open implies phi maps open and M gen manifold} the colimit $\textbf{M}$ resulting from a diagram $\mathcal{F}$ of Hausdorff manifolds may be a locally-Euclidean second-countable topological space. However, there is no guarantee that any such $\textbf{M}$ will be non-Hausdorff. As a matter of fact, we will need to assume that the open submanifolds $M_{ij}$ are proper, open submanifolds whose boundaries are pairwise homeomorphic. The following result summarises the consequences of this assumption.

\begin{theorem}\label{THM: Summary of topological properties}
Let $\mathcal{F}$ be an adjunctive system that satisfies the criteria of Lemma \ref{LEM: fij and Aij open implies phi maps open and M gen manifold}, and let $\textbf{M}$ denote the adjunction space subordinate to $\mathcal{F}$. Suppose furthermore that each gluing map $f_{ij}: M_{ij} \rightarrow M_{ji}$ can be extended to a homeomorphism $\overline{f_{ij}}: Cl^{M_{i}}(M_{ij})\rightarrow Cl^{M_{j}}(M_{ji})$ such that each $\overline{f_{ij}}$ satisfies the conditions of Definition \ref{DEF: Adjunctive System}. Then 
\begin{enumerate}
    \item The Hausdorff-violating points in $\textbf{M}$ occur precisely at the $\textbf{M}$-relative boundaries of the subspaces $M_i$.  
    \item Each $M_i$ is a maximal Hausdorff open submanifold of $\textbf{M}$.
    \item If the indexing set $I$ is finite, then $\textbf{M}$ is paracompact.
    \item If each $M_i$ is compact and the indexing set $I$ is finite, then $\textbf{M}$ is compact.
\end{enumerate}
\end{theorem}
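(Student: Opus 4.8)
The plan is to handle the four claims in the order $(1)$, $(2)$, $(4)$, $(3)$, since $(2)$ will follow quickly from $(1)$, claim $(4)$ is essentially immediate, and $(3)$ is the genuinely technical part that I would save for last. Throughout I would identify each $M_i$ with the open set $\phi_i(M_i) \subseteq \textbf{M}$ provided by Lemma \ref{LEM: fij and Aij open implies phi maps open and M gen manifold}, and I would exploit that $\textbf{M}$ is locally-Euclidean, hence first-countable, so that every closure and separation argument can be run with sequences. I would also use repeatedly that $\phi_i(M_i) \cap \phi_k(M_k) = \phi_k(M_{ki})$.

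For claim $(1)$ I would prove the two inclusions of the identity $\{\text{Hausdorff-violating points}\} = \bigcup_i \partial^{\textbf{M}} M_i$ separately. For ``$\subseteq$'': if $p$ cannot be separated from some $q \neq p$, then $p$ and $q$ cannot lie in a common chart $\phi_k(M_k)$, as that chart is an open Hausdorff subspace; writing $q \in \phi_j(M_j)$, every neighbourhood of $p$ meets the open set $\phi_j(M_j)$, whence $p \in Cl^{\textbf{M}}(\phi_j(M_j)) \setminus \phi_j(M_j) = \partial^{\textbf{M}} M_j$. For ``$\supseteq$'' I would take $p \in \partial^{\textbf{M}} M_i$, write $p = \phi_k(z)$ for some $k \neq i$, and transport $z$ across the boundary; this is exactly where the closure-extension hypothesis is indispensable. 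One checks $z \in Cl^{M_k}(M_{ki}) \setminus M_{ki} = \partial^{M_k} M_{ki}$, so $w := \overline{f_{ki}}(z) \in \partial^{M_i} M_{ik}$ is well-defined, and I would set $q := \phi_i(w)$. Choosing $w_n \in M_{ik}$ with $w_n \to w$, the single sequence $\phi_i(w_n) = \phi_k(f_{ik}(w_n))$ converges simultaneously to $\phi_i(w) = q$ and to $\phi_k(z) = p$ by continuity of $\phi_i$, $\phi_k$ and $\overline{f_{ik}} = \overline{f_{ki}}^{-1}$, exhibiting $\{p,q\}$ as an inseparable pair; the fact that $w \notin M_{ik}$ gives $p \neq q$.

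Claim $(2)$ I would then deduce from $(1)$: each $\phi_i(M_i)$ is open and Hausdorff by Lemma \ref{LEM: fij and Aij open implies phi maps open and M gen manifold}, so only maximality is at issue. If a Hausdorff open submanifold $W \supseteq \phi_i(M_i)$ were strictly larger, then $\phi_i(M_i)$ would fail to be closed in $W$, so $W$ would contain a point $p \in \partial^{\textbf{M}} M_i$; the partner $q = \phi_i(w)$ built in $(1)$ lies in $\phi_i(M_i) \subseteq W$, so $W$ contains the inseparable pair $\{p,q\}$, contradicting Hausdorffness. The one point I would take care with is ruling out that $\phi_i(M_i)$ is clopen in $W$, which I would dispatch using connectedness of $\textbf{M}$ (equivalently, reading maximality within a connected component). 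Claim $(4)$ is then immediate: $\phi_i(M_i)$ is compact as the continuous image of the compact $M_i$, and $\textbf{M}$ is a finite union of these, hence compact.

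Claim $(3)$ is the main obstacle. Starting from an arbitrary open cover $\mathcal{U}$, restriction to each chart yields an open cover of the paracompact space $\phi_i(M_i) \cong M_i$, hence a refinement $\mathcal{V}_i$ that is open and locally finite \emph{in} $\phi_i(M_i)$; since $\phi_i(M_i)$ is open in $\textbf{M}$, the family $\mathcal{V} := \bigcup_i \mathcal{V}_i$ is an open refinement of $\mathcal{U}$ covering $\textbf{M}$. The entire difficulty is that $\mathcal{V}_j$ is only controlled at interior points of $\phi_j(M_j)$, while a point $x \in \partial^{\textbf{M}} M_j$ could a priori be an accumulation point of infinitely many members of $\mathcal{V}_j$. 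The decisive maneuver is to use the closure-extending homeomorphism once more: near $x = \phi_k(z)$ the overlap $\phi_k(M_{kj}) = \phi_j(M_{jk})$ is carried by $\overline{f_{kj}}$ onto a punctured neighbourhood of the \emph{interior} point $\phi_j(\overline{f_{kj}}(z)) \in \phi_j(M_j)$, where $\mathcal{V}_j$ is locally finite; pulling a good neighbourhood back through $\overline{f_{kj}}$ produces a neighbourhood of $x$ meeting only finitely many elements of $\mathcal{V}_j$. Because $I$ is finite, $x$ lies on the boundary of only finitely many charts, so intersecting these finitely many neighbourhoods — together with the control from $x$'s own chart and neighbourhoods avoiding those $\phi_j(M_j)$ whose closure misses $x$ — gives a single neighbourhood of $x$ meeting only finitely many members of $\mathcal{V}$. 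I expect the careful matching of overlap regions across each $\overline{f_{kj}}$, and the finite combination over charts, to be where the real work lies; the finiteness of $I$ is exactly the hypothesis that makes this combination possible and is what would break in the infinite case.
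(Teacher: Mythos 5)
First, a point of comparison: the paper does not actually prove Theorem \ref{THM: Summary of topological properties} — it is stated without proof and deferred to \cite{oconnell2023non} — so there is no in-paper argument to measure yours against, and I will assess the proposal on its merits. Your treatments of (1), (3) and (4) are sound. In (1), transporting the boundary point $z\in Cl^{M_k}(M_{ki})\setminus M_{ki}$ across $\overline{f_{ki}}$ to produce the partner $q=\phi_i(\overline{f_{ki}}(z))$, and witnessing inseparability by the single sequence $\phi_i(w_n)=\phi_k(f_{ik}(w_n))$, is exactly the right use of the closure-extension hypothesis, and first-countability legitimises the sequential argument. In (3), the key manoeuvre — pulling a neighbourhood of $\overline{f_{kj}}(z)\in Cl^{M_j}(M_{jk})$ back through the homeomorphism $\overline{f_{kj}}$ and fattening it to an open $O\ni z$ in $M_k$ with $\phi_k(O)\cap\phi_j(M_j)$ contained in a set meeting only finitely many members of $\mathcal{V}_j$ — does work, and finiteness of $I$ lets you intersect the resulting neighbourhoods over $j$. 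Claim (4) is immediate, as you say.

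The genuine gap is in (2). You correctly isolate the dangerous case, namely that $\phi_i(M_i)$ could be clopen in a strictly larger Hausdorff open $W$, but your proposed dispatch — connectedness of $\textbf{M}$ — does not close it, because connectedness of $\textbf{M}$ says nothing about connectedness of $W$. Concretely, take $M_1=M_2=\mathbb{R}^2$ glued along the open unit disc $D$: then $\textbf{M}$ is connected, yet $W:=\phi_1(M_1)\cup\phi_2(M_2\setminus \overline{D})$ is an open Hausdorff set strictly containing $\phi_1(M_1)$ (its two pieces are disjoint open sets, so every cross pair is separated by them), and $\phi_1(M_1)$ is clopen in $W$. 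So either claim (2) must be read with a connectedness restriction on the competing submanifold $W$ — in which case your boundary argument does finish the job, since a connected $W\supsetneq\phi_i(M_i)$ with $M_i$ connected forces $W\cap\partial^{\textbf{M}}M_i\neq\emptyset$ and hence an inseparable pair inside $W$ by part (1) — or some further standing hypothesis from \cite{oconnell2023non} is in force. As written, your proof of (2) does not go through; you should pin down the precise formulation of maximality in the cited source rather than patch the clopen case with connectedness of the ambient space.
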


Throughout the remainder of this paper we will take $\textbf{M}$ to be a fixed but arbitrary non-Hausdorff manifold that is built as an adjunction of finitely-many Hausdorff manifolds $M_i$ according to both Lemma \ref{LEM: fij and Aij open implies phi maps open and M gen manifold} and Theorem \ref{THM: Summary of topological properties}. Consequently, $\textbf{M}$ is a paracompact, locally-Euclidean second-countable space in which the manifolds $M_i$ sit inside $\textbf{M}$ as maximal Hausdorff open submanifolds.

\subsubsection{Adjunctive Subspaces and Inductive Colimits}
Genreally speaking, given an adjunctive system $\mathcal{F}$, we may form other diagrams by selectively deleting data pertaining to particular indices in the set $I$. This procedure will define what is known as an \textit{adjunctive subsystem}. Given some subset $J \subset I$, and the adjunctive subsystem $\mathcal{F}'$ formed by only considering the data in $J$, we may use Lemma \ref{LEM: universal property of adjunction space} to construct a map between the adjunction spaces subordinate to $\mathcal{F}$ and $\mathcal{F}'$, given by:  
$$\kappa: \bigcup_{\mathcal{F}'} M_i \rightarrow \adj M_i \ \text{where} \ \kappa(\llbracket x,i \rrbracket) = [x,i].$$
Note that here we are using the double-bracket notation to distinguish the two types of equivalence classes. It can be shown that this map $\kappa$ is an open topological embedding whenever $\mathcal{F}$ satisfies the criteria of Lemma \ref{LEM: fij and Aij open implies phi maps open and M gen manifold} \cite[§1.2]{oconnell2023non}.\\

So far we have constructed our non-Hausdorff manifold $\textbf{M}$ by gluing the subspaces $M_i$ together simultaneously. However, it will also be useful to express $\textbf{M}$ as a finite sequence of binary adjunction spaces. We will refer to this sequential construction as an \textit{inductive colimit}. We will now justify the equivalence between these two points of view. The case for a colimit of three manifolds is shown below. 

\begin{lemma}\label{LEM: adj of 3 manifolds is two-step}
Let $\textbf{M}$ be a non-Hausdorff manifold built from three manifolds $M_i$. Then $\textbf{M}$ is homeomorphic to an inductive colimit.
\end{lemma}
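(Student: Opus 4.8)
The plan is to make precise the two ways of building $\textbf{M}$ from three manifolds $M_1, M_2, M_3$ and then to exhibit a homeomorphism between them. First I would set up the sequential construction: form the binary adjunction space $\textbf{N} := M_1 \cup_{f_{12}} M_2$ out of the data indexed by $\{1,2\}$, and then glue $M_3$ onto $\textbf{N}$ along an appropriate gluing region to obtain an inductive colimit $\textbf{N} \cup M_3$. The subtlety here is that $M_3$ meets $M_1$ along $M_{31}$ and meets $M_2$ along $M_{32}$, so the region of $M_3$ that must be glued to $\textbf{N}$ is $M_{31} \cup M_{32}$, and the gluing map sends $x \in M_{31}$ to $[x,1] \in \textbf{N}$ and $x \in M_{32}$ to $[x,2] \in \textbf{N}$. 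Before forming this adjunction I would verify that this defines a legitimate adjunction system in the sense of Definition \ref{DEF: Adjunctive System}: the compatibility condition \textbf{A3} applied to $\mathcal{F}$ is exactly what guarantees that the two prescriptions agree on $M_{31} \cap M_{32}$, so the gluing map into $\textbf{N}$ is well-defined and continuous, and the openness hypotheses of Lemma \ref{LEM: fij and Aij open implies phi maps open and M gen manifold} are inherited.

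The core of the argument is a universal-property comparison. I would use Lemma \ref{LEM: universal property of adjunction space} twice. On one side, $\textbf{M} = \adj M_i$ is the colimit of the full three-object diagram. On the other side, I would show that the inductive colimit $\textbf{N} \cup M_3$ satisfies the same universal property: given any space $X$ with compatible maps $\varphi_1, \varphi_2, \varphi_3$ out of $M_1, M_2, M_3$, the pair $\varphi_1, \varphi_2$ factors uniquely through $\textbf{N}$ by the universal property of the binary adjunction, yielding a map $\psi: \textbf{N} \to X$, and then the compatibility of $\psi$ with $\varphi_3$ along the gluing region lets the pair $\psi, \varphi_3$ factor uniquely through $\textbf{N} \cup M_3$. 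Conversely the canonical embeddings of $M_1, M_2, M_3$ into $\textbf{N} \cup M_3$ form a compatible cocone over the full diagram. Since both spaces are colimits of the same diagram, the induced comparison maps are mutually inverse continuous bijections, hence $\textbf{M}$ and $\textbf{N} \cup M_3$ are homeomorphic.

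The main obstacle I anticipate is the bookkeeping around the composite gluing region $M_{31} \cup M_{32}$ and the verification that the sequential adjunction data genuinely satisfies \textbf{A1}--\textbf{A3}, so that $\textbf{N} \cup M_3$ is itself a bona fide adjunction space to which the earlier lemmas apply. In particular, one must check that the gluing map $M_{31} \cup M_{32} \to \textbf{N}$ remains an open topological embedding; this follows because $\phi_1, \phi_2$ are open embeddings by Lemma \ref{LEM: fij and Aij open implies phi maps open and M gen manifold} and the transition map $f_{12}$ matches the overlap correctly via \textbf{A3}, but the argument requires care on the triple overlap $M_{31} \cap M_{32}$. Once the sequential system is shown to be admissible, the universal-property identification is formal. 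I would present the three-manifold case in full and remark that the general finite case follows by an evident induction on the size of the indexing set, gluing one additional manifold at each stage.
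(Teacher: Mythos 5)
Your proposal is correct and follows essentially the same route as the paper: form $\textbf{N} = M_1 \cup_{f_{12}} M_2$, glue $M_3$ along $M_{13}\cup M_{23}$ via the map determined by $f_{31}$ and $f_{32}$ (well-defined on the triple overlap by \textbf{A3}), and then invoke the universal properties of both $\textbf{M}$ and $\textbf{N}\cup_f M_3$ to obtain mutually inverse comparison maps. The only cosmetic point is that the gluing map should send $x \in M_{31}$ to $\llbracket f_{31}(x),1\rrbracket$ rather than $[x,1]$, but your prose makes clear that this is what is intended.
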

\begin{proof}
For readability we will only provide a sketch of the proof here -- the full details can be found in the appendix. We may take the gluing region $A:= M_{13}\cup M_{23}$ of $M_3$, and define the map $f_{13}\cup f_{23}:A \rightarrow M_1 \cup_{f_{12}} M_2$ by sending each element of $A$ to $M_{i3}$ along the gluing map $f_{i3}$, and then into the equivalence class $\llbracket f_{i3} ,i \rrbracket$ in $M_1 \cup_{f_{12}} M_2$. Note that by the gluing condition \textbf{(A3)} of Definition \ref{DEF: Adjunctive System} there is no ambiguity here. We may thus glue $M_3$ to $M_1 \cup_{f_{12}} M_2$ along the map $f_{13}\cup f_{23}$. The universal properties of both $(M_1 \cup_{f_{12}} M_2)\cup_{f_{13} \cup f_{23}} M_3$ and $\textbf{M}$ can then be invoked in order to create the desired homeomorphism.
\end{proof}

As one might expect, we may generalise the previous result to all finite adjunctive systems.

\begin{theorem}\label{THM: inductive colimit}
    Let $\textbf{M}$ be a non-Hausdorff manifold built from $n$-many manifolds $M_i$. Then $\textbf{M}$ is homeomorphic to an inductive colimit. 
\end{theorem}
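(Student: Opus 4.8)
The plan is to prove Theorem \ref{THM: inductive colimit} by induction on the number $n$ of Hausdorff manifolds comprising $\textbf{M}$, using Lemma \ref{LEM: adj of 3 manifolds is two-step} as the template for the inductive step. The base cases $n=1$ and $n=2$ are immediate: a single manifold is trivially its own (empty) inductive colimit, and a binary adjunction space is by definition already an inductive colimit of one step. The substantive content is the passage from $n-1$ to $n$, which will mirror the three-manifold argument of the preceding lemma almost verbatim, with the two-manifold space $M_1 \cup_{f_{12}} M_2$ replaced by the inductive colimit of the first $n-1$ pieces.

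For the inductive step I would proceed as follows. Assume the result holds for any non-Hausdorff manifold assembled from fewer than $n$ manifolds, and let $\textbf{M} = \adj M_i$ be built from $M_1, \ldots, M_n$. First I would restrict the adjunctive system $\mathcal{F}$ to the index subset $J = \{1, \ldots, n-1\}$, obtaining an adjunctive subsystem $\mathcal{F}'$ whose colimit $\textbf{N} := \bigcup_{\mathcal{F}'} M_i$ is, by the inductive hypothesis, homeomorphic to an inductive colimit. By the remarks on adjunctive subsystems in the excerpt, the canonical map $\kappa: \textbf{N} \to \textbf{M}$ is an open topological embedding, so we may regard $\textbf{N}$ as an open submanifold of $\textbf{M}$. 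Next, in analogy with the lemma, I would define the gluing region $A := \bigcup_{i=1}^{n-1} M_{in}$ inside $M_n$ and assemble the map $g := \bigcup_{i=1}^{n-1} f_{in} : A \to \textbf{N}$ that sends a point of $M_{in}$ to the class $\llbracket f_{in}(x), i \rrbracket$ in $\textbf{N}$; condition \textbf{(A3)} of Definition \ref{DEF: Adjunctive System} guarantees that this assignment is unambiguous on the overlaps $M_{in} \cap M_{jn}$, exactly as in the three-manifold case. I would then form the binary adjunction space $\textbf{N} \cup_{g} M_n$, which is an inductive colimit of total length $(n-1) + 1$.

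The heart of the proof is establishing the homeomorphism $\textbf{N} \cup_g M_n \cong \textbf{M}$, which I would obtain by playing the two universal properties against each other via Lemma \ref{LEM: universal property of adjunction space}. In one direction, the canonical embeddings of $\textbf{N}$ and $M_n$ into $\textbf{M}$ assemble into a map out of the pushout $\textbf{N} \cup_g M_n$, since they agree along $g$ by construction; in the other, the embeddings of each $M_i$ ($i \leq n-1$) into $\textbf{N}$ followed by $\kappa$, together with the embedding of $M_n$, form a cocone over the full diagram $\mathcal{F}$ and so induce a map out of $\textbf{M}$. Checking that these two induced maps are mutually inverse is a matter of chasing the defining formulas on representatives $[x,i]$, and continuity in both directions follows from the universal property, so each map is a homeomorphism.

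The main obstacle I anticipate is purely bookkeeping rather than conceptual: one must verify that the glued pair $(\textbf{N}, M_n, g)$ genuinely satisfies the hypotheses needed to form a legitimate binary adjunction space, namely that $A$ is open in $M_n$, that $g$ is an open topological embedding onto its image in $\textbf{N}$, and that the boundary-extension and properness conditions of Theorem \ref{THM: Summary of topological properties} are inherited so that the result is again one of our admissible non-Hausdorff manifolds. Openness of $A$ and of $g$ follows from the openness of each $M_{in}$ and each $f_{in}$ together with the fact that $\kappa$ and the canonical embeddings $\phi_i$ are open embeddings; the injectivity of $g$ requires invoking \textbf{(A3)} to rule out distinct points of $A$ landing in the same class. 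I would relegate the full verification of these conditions, along with the detailed cocone diagram chase, to the appendix in the same spirit as the sketch given for Lemma \ref{LEM: adj of 3 manifolds is two-step}, since the essential idea is already transparent from the three-manifold case and the induction merely iterates it.
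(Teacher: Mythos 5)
Your proposal matches the paper's proof essentially verbatim: both proceed by induction on $|I|$, pass to the adjunctive subsystem obtained by deleting the last index, glue $M_n$ to the resulting colimit $\textbf{N}$ along the union $A$ of the pairwise gluing regions via the collective map (well-defined by \textbf{(A3)}), and obtain the homeomorphism by playing the two universal properties against each other. The extra bookkeeping you flag (openness of $A$ and of the collective gluing map, inheritance of the admissibility conditions) is a reasonable addition that the paper leaves implicit.
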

\begin{proof}
We will proceed by induction on the size of the indexing set $I$. The case of $I=3$ is already proved as the previous result. So, suppose that the hypothesis holds for all non-Hausdorff manifolds with indexing set $I$ of size $n$. Let $\textbf{M}$ be a non-Hausdorff manifold built from a diagram $\mathcal{F}$, with indexing set $I$ of size $n+1$. Consider the subsystem $\mathcal{F}'$ formed from $\mathcal{F}$ by deleting all data pertaining to the space $M_{n+1}$. Then $\mathcal{F}'$ is a well-defined adjunctive system that yields a non-Hausdorff manifold $\textbf{N}$. 

In analogy to Lemma \ref{LEM: adj of 3 manifolds is two-step}, consider the gluing region of $M_{n+1}$ defined by $A:= \bigcup_{i\leq n} M_{(n+1)i}$ together with the map $f: A \rightarrow \textbf{N}$ which sends each element $x$ in $A$ to its equivalence class $\llbracket f_{i (n+1)}(x),i\rrbracket$ in $\textbf{N}$. This yields a binary adjunction space $\textbf{N} \cup_f M_{n+1}$. Again in analogy to Lemma \ref{LEM: adj of 3 manifolds is two-step}, we may invoke the universal properties of both $\textbf{N}\cup_f M_{n+1}$ and $\textbf{M}$ to create a homeomorphism between the two spaces. The result then follows by applying the induction hypothesis to $\textbf{N}$.
\end{proof} 

\subsection{Smooth Structures}
In Lemma \ref{LEM: fij and Aij open implies phi maps open and M gen manifold}, the topological structure of each manifold $M_i$ was preserved by requiring that the gluing maps $f_{ij}$ act as open embeddings. The consequence was that the canonical embeddings $\phi_i: M_i \rightarrow \textbf{M}$ were also open, which ensured that the local Euclidean structure of the manifolds $M_i$ can be transferred to $\textbf{M}$. Formally, this can be achieved by using a collection of atlases $\mathcal{A}_i$ of the manifolds $M_i$ to define an atlas $\mathcal{A}$ on $\textbf{M}$:  
$$ \mathcal{A} := \bigcup_{i \in I} \{ (\phi_i(U_\alpha), \varphi_\alpha \circ \phi_i^{-1}) \ | \ (U_\alpha, \varphi_\alpha) \in \mathcal{A}_i\}.    $$
We will now argue that this technique defines a smooth atlas of $\textbf{M}$, provided that the gluing maps $f_{ij}$ are all smooth.

\begin{lemma}\label{LEM: non-Haus smooth manifold}
Let $\textbf{M}$ be a non-Hausdorff manifold built according to Lemma \ref{LEM: fij and Aij open implies phi maps open and M gen manifold}. If, additionally, the $M_i$ are smooth manifolds and the $f_{ij}$ are all smooth maps, then $\textbf{M}$ admits a smooth atlas.
\end{lemma}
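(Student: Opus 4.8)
The plan is to verify directly that the proposed collection $\mathcal{A}$ satisfies the three defining properties of a smooth atlas: that its chart domains form an open cover of $\textbf{M}$, that each chart is a homeomorphism onto an open subset of $\mathbb{R}^d$, and — the crux of the matter — that all transition maps are smooth. The first two properties follow almost immediately from Lemma \ref{LEM: fij and Aij open implies phi maps open and M gen manifold}. Since each $\phi_i$ is an open topological embedding, every set $\phi_i(U_\alpha)$ is open in $\textbf{M}$, and each map $\varphi_\alpha \circ \phi_i^{-1}$ restricts to a homeomorphism onto the open set $\varphi_\alpha(U_\alpha) \subseteq \mathbb{R}^d$. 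Because the images $\phi_i(M_i)$ exhaust $\textbf{M}$ and each atlas $\mathcal{A}_i$ covers $M_i$, the domains $\phi_i(U_\alpha)$ cover $\textbf{M}$.

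The substance of the proof is the transition-map computation, which I would split into two cases. When two charts come from the same index $i$, the canonical embedding cancels and the transition map reduces to $\psi_\beta \circ \varphi_\alpha^{-1}$, which is smooth because $\mathcal{A}_i$ is a smooth atlas of $M_i$. The interesting case is when the charts come from distinct indices $i \neq j$, say $(U_\alpha, \varphi_\alpha) \in \mathcal{A}_i$ and $(V_\beta, \psi_\beta) \in \mathcal{A}_j$. Here I would first describe the overlap: a point $[x,i]$ lies in $\phi_j(V_\beta)$ precisely when $x \in M_{ij}$ and $f_{ij}(x) \in V_\beta$, so the overlap corresponds under $\varphi_\alpha$ to the set $\varphi_\alpha(U_\alpha \cap M_{ij} \cap f_{ij}^{-1}(V_\beta))$, which is open since $U_\alpha$ and $M_{ij}$ are open and $f_{ij}$ is continuous.

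The key observation — and the step I expect to carry the argument — is that on this overlap the composite $\phi_j^{-1} \circ \phi_i$ agrees with the gluing map $f_{ij}$. This is a direct consequence of the equivalence relation defining $\textbf{M}$: for $x \in M_{ij}$ we have $[x,i] = [f_{ij}(x), j] = \phi_j(f_{ij}(x))$, so applying $\phi_j^{-1}$ returns $f_{ij}(x)$. With this identity in hand, the transition map becomes $\psi_\beta \circ f_{ij} \circ \varphi_\alpha^{-1}$, which is exactly the coordinate representation of $f_{ij}$ and is therefore smooth by hypothesis. The inverse transition map is handled symmetrically using $f_{ij}^{-1} = f_{ji}$ from condition \textbf{(A2)} of Definition \ref{DEF: Adjunctive System}, though this becomes automatic once every transition map is shown to be smooth. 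Having verified smoothness in both cases, I would conclude that $\mathcal{A}$ is a smooth atlas. The only genuine obstacle is the bookkeeping: correctly exhibiting the overlap as an open subset and confirming that the identity $\phi_j^{-1} \circ \phi_i = f_{ij}$ holds there. Once this is pinned down, the assumed smoothness of the $f_{ij}$ does all the work.
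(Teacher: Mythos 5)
Your proposal is correct and follows essentially the same route as the paper: both reduce the transition map between charts from $\mathcal{A}_i$ and $\mathcal{A}_j$ to $\varphi_\beta \circ f_{ij} \circ \varphi_\alpha^{-1}$ via the identity $\phi_j^{-1}\circ\phi_i = f_{ij}$ on the overlap, and conclude by the assumed smoothness of the gluing maps. Your version simply spells out the bookkeeping (open cover, openness of the overlap, the same-index case) that the paper leaves implicit.
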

\begin{proof}
Consider the atlas $\mathcal{A}$ of $\textbf{M}$ as described above. We show that the transition maps of this atlas will be smooth in the Euclidean sense. Suppose that $[x,i]$ is a element of $\textbf{M}$, and consider two charts of $\textbf{M}$ at this point. Since $[x,i]$ is an equivalence class, in general these two charts may come from different atlases $\mathcal{A}_i$ and $\mathcal{A}_j$. As such, the charts will be of the form $(\phi_i(U_\alpha), \varphi_\alpha \circ \phi_i^{-1})$ and $(\phi_j(U_\beta), \varphi_\beta \circ \phi_j^{-1})$, where $(U_\alpha,\varphi_\alpha)$ is a chart of $M_i$ at the point $x$, and $(U_\beta,\varphi_\beta)$ is a chart of $M_j$ at the point $f_{ij}(x)$. The transition maps in $\textbf{M}$ will then be: $$  (\varphi_\beta \circ \phi_j^{-1})\circ(\varphi_\alpha \circ \phi_i^{-1})^{-1} = \varphi_\beta \circ (\phi_j^{-1}\circ \phi_i) \circ \varphi_\alpha^{-1} = \varphi_\beta \circ f_{ij} \circ \varphi_\alpha^{-1},   $$
which are smooth since each $f_{ij}$ is. 
\end{proof}

Since each smooth atlas has a unique maximal extension, we may consider the smooth structure induced from the atlas $\mathcal{A}$ described above. This allows us to effectively see our non-Hausdorff manifold $\textbf{M}$ as smooth. We will now introduce some useful criteria for identifying smooth maps.

\begin{lemma}\label{LEM: smooth maps}
Let $\textbf{M}$ and $\textbf{N}$ be smooth (possibly non-Hausdorff) manifolds. A map $f: \textbf{M} \rightarrow \textbf{N}$ is smooth if and only if the restrictions $f|_{M_i}$ are smooth for all $i$ in $I$.
\end{lemma}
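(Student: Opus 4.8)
The plan is to prove both directions of the biconditional, with the forward direction being essentially immediate and the reverse direction carrying the substance of the argument. The key structural fact I would lean on is that each $M_i$ sits inside $\textbf{M}$ as an open submanifold via the canonical embedding $\phi_i$ (Lemma \ref{LEM: fij and Aij open implies phi maps open and M gen manifold}), and that the collection $\{\phi_i(M_i)\}_{i \in I}$ forms an open cover of $\textbf{M}$. Smoothness of a map between manifolds is a local condition that can be checked on any open cover, so the whole proof reduces to translating between the intrinsic smooth structure on $\textbf{M}$ and the smooth structures on the pieces $M_i$.

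For the forward direction, suppose $f: \textbf{M} \to \textbf{N}$ is smooth. Each $\phi_i: M_i \to \textbf{M}$ is a smooth open embedding — indeed, by the very construction of the atlas $\mathcal{A}$ in Lemma \ref{LEM: non-Haus smooth manifold}, the charts of $\textbf{M}$ pulled back along $\phi_i$ recover the charts of $M_i$, so $\phi_i$ is a diffeomorphism onto its open image. The restriction $f|_{M_i}$ is then just the composite $f \circ \phi_i$, and since the composite of smooth maps is smooth, $f|_{M_i}$ is smooth. Here I am tacitly identifying $M_i$ with $\phi_i(M_i)$ as the paper's convention permits, but I would be careful to phrase the restriction as precomposition with $\phi_i$ so that the domain is genuinely $M_i$ with its own smooth structure.

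For the reverse direction, suppose each $f|_{M_i}$ is smooth. To verify that $f$ is smooth at an arbitrary point $[x,i] \in \textbf{M}$, I would pick a chart around $[x,i]$ and a chart around its image $f([x,i])$ and check that the coordinate representation is smooth. Since the $\phi_i(M_i)$ cover $\textbf{M}$, the point $[x,i]$ lies in some $\phi_i(M_i)$, and I may choose a chart of $\textbf{M}$ at $[x,i]$ of the form $(\phi_i(U_\alpha), \varphi_\alpha \circ \phi_i^{-1})$ coming from a chart of $M_i$. Reading $f$ in these coordinates amounts to reading $f|_{M_i}$ in the corresponding coordinates of $M_i$, which is smooth by hypothesis. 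Because smoothness at a point depends only on behaviour in an arbitrarily small neighbourhood, and $\phi_i(M_i)$ is an open neighbourhood of $[x,i]$ on which $f$ agrees with (the pushforward of) the smooth map $f|_{M_i}$, smoothness at $[x,i]$ follows.

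The main subtlety — and the step I expect to require the most care — is the potential ambiguity when a point $[x,i]$ lies in several of the overlapping pieces, say $[x,i] \in \phi_i(M_i) \cap \phi_j(M_j)$. There one could in principle read $f$ via either $M_i$ or $M_j$, and one must confirm the two readings are consistent. This consistency is exactly guaranteed by the compatibility condition $\phi_j^{-1} \circ \phi_i = f_{ij}$ on overlaps together with the smoothness of the gluing maps $f_{ij}$ established in Lemma \ref{LEM: non-Haus smooth manifold}; the same computation that showed the transition maps are smooth shows that the two coordinate representations of $f$ differ by precomposition with the smooth transition $f_{ij}$, hence are simultaneously smooth. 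Since smoothness need only be checked in one chart at each point, however, this overlap issue never actually blocks the argument — it merely confirms the well-definedness of the local verification — so the proof goes through cleanly once the open-cover reduction is in place.
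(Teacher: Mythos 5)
Your argument is correct and follows essentially the same route as the paper: the paper simply observes that $\{M_i\}$ is an open cover of $\textbf{M}$ and cites the standard local criterion for smoothness (Prop.~2.6 of \cite{lee2013smooth}), which is exactly the statement you re-derive by hand, including the (strictly optional) consistency check on overlaps via the smooth transition maps $f_{ij}$.
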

\begin{proof}
According to Lemma \ref{LEM: fij and Aij open implies phi maps open and M gen manifold} the collection $M_i$ forms an open cover of $\textbf{M}$. The result then follows as an application of Prop. 2.6 of \cite{lee2013smooth}.
\end{proof}
According to the above result, we may now view the canonical embeddings $\phi_i: M_i \rightarrow \textbf{M}$ as \textit{smooth} open embeddings. We may also argue for a universal property as in Lemma \ref{LEM: universal property of adjunction space} and thus interpret $\textbf{M}$ as the colimit of the diagram $\mathcal{F}$ in the category of smooth locally-Euclidean spaces. The following remark makes precise the primary object of study in this paper. 

\begin{remark}\label{REM: assumptions in this paper}
In addition to the conditions of Lemma \ref{LEM: fij and Aij open implies phi maps open and M gen manifold} and Theorem \ref{THM: Summary of topological properties}, hereafter we will also assume that our non-Hausdorff manifold $\textbf{M}$ is smooth in the sense of Theorem \ref{LEM: non-Haus smooth manifold}.  Moreover, in order to study smooth objects defined on $\textbf{M}$, we will also need to assume that the gluing regions $M_{ij}$ have \textit{diffeomorphic boundaries}, which in this context means that the closures $Cl^{M+i}(M_{ij})$ are all smooth closed submanifolds, and the extended maps $\overline{f_{ij}}$ of Theorem \ref{THM: Summary of topological properties} are all smooth.
\end{remark}

\subsection{Smooth Functions}

Since smoothness is a local property, we may define smooth functions on $\textbf{M}$ as in the Hausdorff case. Moreover, we may still appeal to the structure of the real line to view the space $C^\infty(\textbf{M})$ as a unital associative algebra. In this section we will study $C^\infty(\textbf{M})$ in some detail. To begin with, we will discuss some techniques for constructing functions on $\textbf{M}$. We will then use these techniques to establish a relationship between $C^\infty(\textbf{M})$ and the algebras $C^\infty(M_i)$. 

\subsubsection{The Construction of Functions on $\textbf{M}$}

In the Hausdorff setting, there are two useful techniques for constructing smooth functions on a manifold. Roughly speaking, these are:
\begin{enumerate}
    \item to glue together smooth functions defined on open subsets, and 
    \item to extend functions defined on a closed subset.
\end{enumerate}
Will will refer to these two techniques are often known as the ``Gluing Lemma" and the ``Extension Lemma", respectively. For a more thorough discussion of these two constructions, the reader is encouraged to see Corollary 2.8 and Lemma 2.26 of \cite{lee2013smooth}. \\

We will now create versions of these two results for our non-Hausdorff manifold $\textbf{M}$. To begin with, we show that the smooth functions on $\textbf{M}$ can be built by gluing together a collection of smooth functions that are defined on the Hausdorff submanifolds $M_i$.
\begin{lemma}\label{LEM: Adjunction of a smooth function}
If $r_i: M_i \rightarrow \mathbb{R}$ is a collection of smooth functions such that $r_i = r_j\circ f_{ij}$ for all $i,j$ in $I$, then the map $\textbf{r}: \textbf{M}\rightarrow \mathbb{R}$ defined by $\textbf{r}([x,i]) = r_i(x)$ is a smooth function on $\textbf{M}$.
\end{lemma}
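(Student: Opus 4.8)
The plan is to reduce the smoothness of $\textbf{r}$ to the local criterion established in Lemma \ref{LEM: smooth maps}, after first confirming that the formula $\textbf{r}([x,i]) = r_i(x)$ genuinely descends to a well-defined function on the quotient $\textbf{M}$. The compatibility hypothesis $r_i = r_j \circ f_{ij}$ is the one place where real content enters, and it is used exactly to secure this descent.

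First I would verify well-definedness. A point of $\textbf{M}$ is an equivalence class, so I must confirm that the value $r_i(x)$ is independent of the chosen representative $(x,i)$. Suppose $[x,i] = [y,j]$; by the definition of the equivalence relation $\sim$ this means $y = f_{ij}(x)$. The hypothesis $r_i = r_j \circ f_{ij}$ then gives $r_j(y) = r_j(f_{ij}(x)) = r_i(x)$, so the two representatives assign the same value, and $\textbf{r}$ is therefore a well-defined map $\textbf{M} \rightarrow \mathbb{R}$.

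Next I would establish smoothness. Since $\mathbb{R}$ is a smooth (Hausdorff) manifold, Lemma \ref{LEM: smooth maps} applies with $\textbf{N} = \mathbb{R}$: it suffices to show that each restriction $\textbf{r}|_{M_i}$ is smooth, where I use the identification of $M_i$ with the open submanifold $\phi_i(M_i) \subseteq \textbf{M}$ supplied by Lemma \ref{LEM: fij and Aij open implies phi maps open and M gen manifold}. Under this identification the restriction is computed by precomposing with the diffeomorphism $\phi_i$, and by construction $\textbf{r} \circ \phi_i(x) = \textbf{r}([x,i]) = r_i(x)$, so $\textbf{r}|_{M_i}$ coincides with $r_i$. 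Each $r_i$ is smooth by hypothesis, hence every restriction is smooth, and Lemma \ref{LEM: smooth maps} yields that $\textbf{r}$ is smooth on all of $\textbf{M}$.

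Most of this argument is bookkeeping rather than genuine difficulty, and the only step demanding care is the well-definedness check. It is worth observing that the compatibility condition is precisely the hypothesis of the universal property in Lemma \ref{LEM: universal property of adjunction space}, so an alternative route would be to obtain $\textbf{r}$ as the unique continuous factorisation of the maps $r_i$ through $\textbf{M}$ and then upgrade to smoothness using the smooth universal property discussed after Lemma \ref{LEM: smooth maps}. Invoking Lemma \ref{LEM: smooth maps} directly, however, is the most economical path and avoids re-deriving the factorisation.
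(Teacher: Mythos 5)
Your proof is correct and follows essentially the same route as the paper: verify well-definedness using the compatibility condition $r_i = r_j \circ f_{ij}$, then observe that $\textbf{r}|_{M_i} = r_i$ and invoke Lemma \ref{LEM: smooth maps}. The additional remark about the universal-property alternative is accurate but not needed.
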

\begin{proof}
Observe first that $\textbf{r}$ is well-defined -- if we have $[x,i]=[y,j]$, then $x\in M_{ij}$ with $f_{ij}(x)=y$. So, we have that $$\textbf{r}[y,j] = r_j(y) = r_j(f_{ij}(x)) =r_i(x) = \textbf{r}[x,i]. $$
Moreover, the restriction of $\textbf{r}$ to each $M_i$ equals $r_i$, which is smooth by assumption. The result then follows from an application of Lemma \ref{LEM: smooth maps}. 
\end{proof}

The above result is a straightforward  analogue of the Gluing Lemma. In contrast, an analogue of the Extension Lemma is more involved. The underlying complication is that the extension of a functions defined on a closed subset necessarily requires partitions of unity subordinate to any open cover. As proved in \cite{oconnell2023non}, in the non-Hausdorff setting we have the following obstruction to the existence of partitions of unity.
\begin{lemma}\label{LEM: no partition of unity for Hausdorff cover}
Any open cover of $\textbf{M}$ by Hausdorff sets does not admit a partition of unity subordinate to it.
\end{lemma}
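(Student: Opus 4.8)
The plan is to derive a contradiction by exploiting the defining feature of $\textbf{M}$: the presence of topologically non-separable points. Since $\textbf{M}$ is non-Hausdorff, there exist distinct $p \neq q$ that cannot be separated by disjoint open sets; by Theorem \ref{THM: Summary of topological properties} such a pair lies on the $\textbf{M}$-relative boundaries of the submanifolds $M_i$. The first step I would establish is a rigidity lemma: \emph{every} continuous function $g: \textbf{M} \to \mathbb{R}$ satisfies $g(p) = g(q)$. Because $\textbf{M}$ is locally-Euclidean and hence first-countable, non-separability lets me pick nested neighbourhood bases $\{V_n\}$ of $p$ and $\{W_n\}$ of $q$ together with a sequence $x_n \in V_n \cap W_n$, so that $x_n \to p$ and $x_n \to q$ simultaneously. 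Continuity of $g$ then forces $g(x_n) \to g(p)$ and $g(x_n) \to g(q)$ in $\mathbb{R}$, and since $\mathbb{R}$ is Hausdorff its limits are unique, giving $g(p) = g(q)$.

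The second step records the consequence of the cover being Hausdorff: no single member of the cover can contain both $p$ and $q$. Indeed, if an open $U \subseteq \textbf{M}$ were Hausdorff as a subspace and contained both points, then disjoint $U$-open sets separating $p$ and $q$ would also be open in $\textbf{M}$ (as $U$ is open), contradicting non-separability.

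With these two observations the argument concludes quickly. Suppose toward a contradiction that $\{\psi_\alpha\}$ is a partition of unity subordinate to an open cover $\{U_\alpha\}$ by Hausdorff sets. Since $\sum_\alpha \psi_\alpha(p) = 1$, some index $\alpha_0$ satisfies $\psi_{\alpha_0}(p) > 0$. Applying the rigidity lemma to the smooth, hence continuous, function $\psi_{\alpha_0}$ yields $\psi_{\alpha_0}(q) = \psi_{\alpha_0}(p) > 0$. Thus both $p$ and $q$ lie in $\{\psi_{\alpha_0} \neq 0\} \subseteq \operatorname{supp}(\psi_{\alpha_0}) \subseteq U_{\alpha_0}$, contradicting the second step.

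The only delicate point — and the one I would treat most carefully — is the simultaneous convergence $x_n \to p$ and $x_n \to q$ underpinning the rigidity lemma; everything else is formal. Here I would lean on the explicit boundary description of Theorem \ref{THM: Summary of topological properties}, which guarantees that the non-separable pair arises from identifying approaching sequences in a common gluing region, so that the shared approximating sequence is genuinely available.
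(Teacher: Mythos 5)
Your proof is correct: the rigidity step (non-separated points in a first-countable space admit a common approximating sequence, so no continuous real-valued function can distinguish them) combined with the observation that a Hausdorff open subset cannot contain a non-separated pair yields the contradiction cleanly, and this is precisely the standard argument that the paper defers to \cite{oconnell2023non} for. No gaps; the only cosmetic remark is that your final paragraph's appeal to Theorem \ref{THM: Summary of topological properties} is unnecessary, since first-countability alone already furnishes the shared sequence.
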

Since each $M_i$ is an open, Hausdorff submanifold of $\textbf{M}$, we cannot directly use any partitions of unity subordinate to the cover $\{M_i\}$. However, our restrictions on the topology of $\textbf{M}$ are stringent enough so as to allow certain techniques involving partitions of unity. Indeed, the requirement that the gluing regions $M_{ij}$ have diffeomorphic boundaries may allow us to smoothly transfer objects between the submanifolds $M_i$, and the requirement that $\textbf{M}$ be a finite colimit may allow this transfer to be performed inductively. We now illustrate this approach with a construction of non-zero functions on $\textbf{M}$.

\begin{theorem}\label{THM: functions non-empty}
Any function $r_i$ on $M_i$ can be extended to a function on $\textbf{M}$. 
\end{theorem}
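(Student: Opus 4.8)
The plan is to reduce to the binary case and then induct along the inductive-colimit tower of Theorem \ref{THM: inductive colimit}, extending the function across one gluing region at a time and feeding each partial extension into the gluing principle of Lemma \ref{LEM: Adjunction of a smooth function}. After relabelling the pieces so that the distinguished submanifold carrying the datum is $M_1$ (so $r_i = r_1$), I would use Theorem \ref{THM: inductive colimit} to present $\textbf{M}$ as a chain $\textbf{N}_1 = M_1 \subseteq \textbf{N}_2 \subseteq \cdots \subseteq \textbf{N}_n = \textbf{M}$, where each stage $\textbf{N}_{k+1} = \textbf{N}_k \cup_f M_{k+1}$ is a binary adjunction glued along the open region $A := \bigcup_{j\le k} M_{(k+1)j} \subseteq M_{k+1}$. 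The inductive hypothesis provides a smooth $\textbf{r}_k \in C^\infty(\textbf{N}_k)$ restricting to $r_1$, and the content of the inductive step is to extend $\textbf{r}_k$ to $\textbf{N}_{k+1}$; the base case $\textbf{N}_1 = M_1$ is immediate.

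For the inductive step, note that compatibility in the sense of Lemma \ref{LEM: Adjunction of a smooth function} forces the values of the new piece on $A$: on each $M_{(k+1)j}$ the function must equal $\textbf{r}_k \circ f$, which is smooth on the open set $A$ as a composite of smooth maps and single-valued on overlaps by the cocycle condition \textbf{A3}. It then remains to extend this forced function from the open set $A$ to all of the Hausdorff manifold $M_{k+1}$. I would accomplish this in two moves: first extend it smoothly onto the closed set $Cl^{M_{k+1}}(A)$, and then invoke the classical Extension Lemma (Lemma 2.26 of \cite{lee2013smooth}) to pass from this closed set to all of $M_{k+1}$.

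The first move is where the diffeomorphic-boundary hypothesis of Remark \ref{REM: assumptions in this paper} does the work. Since the index set is finite we have $Cl^{M_{k+1}}(A) = \bigcup_{j\le k} Cl^{M_{k+1}}(M_{(k+1)j})$, and on each closure the extended map $\overline{f_{(k+1)j}}$ of Theorem \ref{THM: Summary of topological properties} is a diffeomorphism onto the closed submanifold $Cl^{M_{j}}(M_{j(k+1)}) \subseteq M_j$. As $\textbf{r}_k$ restricts to a smooth function on this submanifold of the Hausdorff piece $M_j$, the composite $\textbf{r}_k \circ \overline{f_{(k+1)j}}$ is a smooth extension of the forced function across the relative boundary of $M_{(k+1)j}$ (the Hausdorff-violating locus of Theorem \ref{THM: Summary of topological properties}). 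This yields a genuinely smooth, i.e. locally extendable, function on $Cl^{M_{k+1}}(A)$, whereupon the Extension Lemma produces the desired $r_{k+1} \in C^\infty(M_{k+1})$ agreeing with $\textbf{r}_k \circ f$ on $A$. The restriction of the combined data to each maximal Hausdorff piece $M_1,\dots,M_{k+1}$ is then smooth, so by Lemma \ref{LEM: smooth maps} it defines a smooth $\textbf{r}_{k+1} \in C^\infty(\textbf{N}_{k+1})$, closing the induction.

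The main obstacle is the consistency of the boundary extension where several gluing regions $M_{(k+1)j}$ accumulate at a common boundary point of $M_{k+1}$: there the candidate extensions $\textbf{r}_k \circ \overline{f_{(k+1)j}}$ arising from different indices $j$ must not merely agree as values but must patch into a single function that is \emph{smooth}, rather than only continuous, across such multi-region corners. I expect this to follow from the assumption in Theorem \ref{THM: Summary of topological properties} and Remark \ref{REM: assumptions in this paper} that the extended maps $\overline{f_{ij}}$ are themselves smooth and satisfy \textbf{A3}, so that the jets of the candidate extensions coincide along overlapping closures to all orders. Verifying that the patched function is therefore smooth on all of $Cl^{M_{k+1}}(A)$, and not just on the interior of each individual closure, is the delicate point the argument must treat with care.
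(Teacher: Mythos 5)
Your proposal is correct and follows essentially the same route as the paper: induction along the inductive-colimit tower of Theorem \ref{THM: inductive colimit}, pulling the already-constructed function back along the extended boundary diffeomorphisms $\overline{f_{(k+1)j}}$ onto $Cl^{M_{k+1}}(A)$, extending into $M_{k+1}$ via the Extension Lemma, and gluing with Lemma \ref{LEM: Adjunction of a smooth function}. The ``delicate point'' you flag about patching the candidate extensions across overlapping closures is exactly the step the paper handles by assembling the single map $\overline{f}:Cl^{M_{n+1}}(A)\rightarrow Cl^{\textbf{N}}(f(A))$, well defined by the cocycle condition and locally equal to the $\overline{f_{(n+1)i}}$.
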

\begin{proof}
We proceed by induction on the size of $I$. Suppose first that $\textbf{M}$ is a binary adjunction space $M_1 \cup_{f_{12}} M_2$, and without loss generality suppose that $i=2$. Let $r_2$ be any smooth function on $M_2$. The restriction of $r_2$ to the closed submanifold $Cl^{M_2}(M_{12})$ is also a smooth function, and moreover the composition $$ r_2 \circ \overline{f_{12}}: Cl^{M_1}(M_{12}) \rightarrow \mathbb{R} $$ is a smooth function on the copy of $Cl^{M_1}(M_{12})$ that sits inside $M_1$. We can now use a partition of unity argument on $M_1$ to extend $r_2 \circ \overline{f}_{12}$ to a function $r_1$ defined on all of $M_1$. We have thus created a pair of functions $r_i$ in $C^\infty(M_i)$ that agree on $M_{12}$. This pair of functions satisfies the antecedent of Lemma \ref{LEM: Adjunction of a smooth function}, and thus define a function $\textbf{r}$ on $\textbf{M}$ which restricts to $r_2$ on $M_2$. \\

Suppose now the hypothesis holds for all non-Hausdorff manifolds constructed as the colimit of $n$-many Hausdorff manifolds $M_i$, according to Theorem \ref{THM: Summary of topological properties}. Let $\textbf{M}$ be a non-Hausdorff manifold defined as the colimit of $(n+1)$-many manifolds $M_i$. Without loss of generality, pick any smooth function $r_n$ defined of $M_n$. According to Theorem \ref{THM: inductive colimit}, we may view $\textbf{M}$ as the inductive colimit: $$ \textbf{N} \cup_f M_{n+1},$$ where we glue along the set $A:= \bigcup_{i\leq n}  M_{(n+1)i}$. By the induction hypothesis, there exists some non-zero function $\textbf{r}$ defined on the adjunction space $\textbf{N}$ that extends $r_n$. 

Using the fact that each $f_{ij}$ can be extended to a diffeomorphism of boundaries, we can extend the collective function $f$ to a closed function $\overline{f}: Cl^{M_{n+1}}(A)\rightarrow Cl^{\textbf{N}}(f(A))$. The map $\overline{f}$ is well defined since the extensions $\overline{f_{(n+1)i}}$ satisfy a cocycle condition as in Definition \ref{DEF: Adjunctive System}, and moreover $\overline{f}$ is a diffeomorphism since locally it equals $\overline{f_{(n+1)i}}$.

We can restrict $\textbf{r}$ to $Cl^{\textbf{N}}(A)$ and then the map $\textbf{r} \circ \overline{f}$ will be a smooth function defined on $Cl^{M_{n+1}}(A)$. Using a partition of unity on $M_{n+1}$, we may extend the function $\textbf{r} \circ \overline{f}$ to some function $r'$ defined on all of $M_{n+1}$. We may then use Lemma \ref{LEM: Adjunction of a smooth function} on $\textbf{r}$ and $r'$ to form a globally-defined function on all of $\textbf{M}$, which by construction will restrict to $r_n$ on $M_n$.  
\end{proof}

Usefully, smooth functions can be pulled back along smooth maps via precomposition. In the case of the canonical embeddings $\phi_i$, precomposition gives an algebra morphism $\phi_i^*: C^\infty(\textbf{M})\rightarrow C^\infty(M_i)$. As an immediate application of Theorem \ref{THM: functions non-empty} we make the following observation. 

\begin{corollary}\label{COR: phii* map is surjective}
For each $M_i$, the map $\phi^*_i: C^\infty(\textbf{M})\rightarrow C^\infty(M_i)$ is surjective.
\end{corollary}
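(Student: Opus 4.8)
The plan is to unwind the definition of surjectivity and reduce it directly to the extension result just proved. To show $\phi_i^*$ is surjective, I must take an arbitrary $r_i \in C^\infty(M_i)$ and produce some $\mathbf{r} \in C^\infty(\mathbf{M})$ with $\phi_i^*(\mathbf{r}) = r_i$. By definition $\phi_i^*(\mathbf{r}) = \mathbf{r}\circ \phi_i$, and since each $\phi_i$ is the canonical embedding sending $x \mapsto [x,i]$, the condition $\phi_i^*(\mathbf{r}) = r_i$ simply says $\mathbf{r}([x,i]) = r_i(x)$ for all $x \in M_i$; in other words, $\mathbf{r}$ restricts to $r_i$ on the submanifold $M_i$ (under the identification of $M_i$ with $\phi_i(M_i)$).

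First I would invoke Theorem \ref{THM: functions non-empty}, which states precisely that any function $r_i$ on $M_i$ can be extended to a function on $\mathbf{M}$. This produces a smooth $\mathbf{r} \in C^\infty(\mathbf{M})$ whose restriction to $M_i$ equals $r_i$. The only remaining task is the bookkeeping to confirm that "restricts to $r_i$ on $M_i$" is exactly the statement $\phi_i^*(\mathbf{r}) = r_i$: unwinding, $\big(\phi_i^*\mathbf{r}\big)(x) = \mathbf{r}(\phi_i(x)) = \mathbf{r}([x,i]) = r_i(x)$ for every $x \in M_i$, so $\phi_i^*(\mathbf{r}) = r_i$ as elements of $C^\infty(M_i)$. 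Since $r_i$ was arbitrary, $\phi_i^*$ is surjective.

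There is essentially no obstacle here: all the difficulty has already been absorbed into Theorem \ref{THM: functions non-empty}, whose inductive proof handled the genuinely hard part of using partitions of unity on each Hausdorff piece to propagate an extension across the colimit. This corollary is a one-line translation of that theorem into the language of the pullback map $\phi_i^*$. The only point worth stating carefully is that $\phi_i^*$ is a well-defined algebra morphism (so that ``surjective'' is meaningful), but this is already granted in the sentence preceding the corollary, where $\phi_i^*$ is identified as the algebra morphism given by precomposition along the smooth embedding $\phi_i$. Accordingly, I expect the write-up to be no more than two or three sentences.
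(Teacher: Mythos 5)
Your proposal is correct and matches the paper exactly: the corollary is stated there as an immediate consequence of Theorem \ref{THM: functions non-empty}, with no further argument given, and your unwinding of $\phi_i^*(\mathbf{r}) = \mathbf{r}\circ\phi_i = r_i$ is precisely the intended (trivial) bookkeeping.
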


\subsubsection{The Fibre Product Structure of $C^\infty(\textbf{M})$}
We saw in the previous section that we can always create smooth functions on $\textbf{M}$ by gluing together functions that are defined on the component spaces $M_i$, provided that they are compatible on the overlaps $M_{ij}$. The following result expresses this principle at the level of algebras. 

\begin{theorem}\label{THM: fibred product of smooth functions}
The algebra $C^{\infty}(\textbf{M})$ is isomorphic to the fibred product $$ \prod_{\mathcal{F}} C^\infty(M_i) :=\big\{ (r_1, ..., r_n) \in \bigoplus_{i\in I}  C^\infty(M_i) \big{|} \ r_i = r_j \circ f_{ij} \ on \ M_{ij} \ \text{for all } i,j\in I \big\}.      $$
\end{theorem}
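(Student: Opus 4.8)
The plan is to construct an explicit algebra isomorphism between $C^\infty(\textbf{M})$ and $\prod_{\mathcal{F}} C^\infty(M_i)$ using the pullback maps along the canonical embeddings. First I would define the candidate map $\Phi: C^\infty(\textbf{M}) \rightarrow \bigoplus_{i\in I} C^\infty(M_i)$ by $\Phi(\textbf{r}) = (\phi_1^*\textbf{r}, \dots, \phi_n^*\textbf{r})$, that is, sending a global function to the tuple of its restrictions $\textbf{r}|_{M_i} = \textbf{r}\circ\phi_i$. Since each $\phi_i^*$ is an algebra morphism, $\Phi$ is automatically an algebra morphism. The first genuine step is to check that the image of $\Phi$ lands in the fibred product: for any global $\textbf{r}$ the compatibility $\phi_i^*\textbf{r} = (\phi_j^*\textbf{r})\circ f_{ij}$ on $M_{ij}$ follows directly from the relation $\phi_i = \phi_j \circ f_{ij}$ valid on the gluing region (equivalently, from the defining property of the equivalence classes $[x,i]$), so $\Phi$ corestricts to a morphism $\Phi: C^\infty(\textbf{M}) \rightarrow \prod_{\mathcal{F}} C^\infty(M_i)$.

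Next I would establish that $\Phi$ is injective. This is where the fact that the $M_i$ form an open cover of $\textbf{M}$ (Lemma \ref{LEM: fij and Aij open implies phi maps open and M gen manifold}) does the work: if $\phi_i^*\textbf{r} = 0$ for every $i$, then $\textbf{r}$ vanishes on every $\phi_i(M_i)$, hence on all of $\textbf{M}$, so $\textbf{r} = 0$. For surjectivity I would take an arbitrary compatible tuple $(r_1,\dots,r_n)$ in the fibred product, so that $r_i = r_j\circ f_{ij}$ on each $M_{ij}$. This is precisely the antecedent of Lemma \ref{LEM: Adjunction of a smooth function}, which produces a well-defined smooth function $\textbf{r}$ on $\textbf{M}$ with $\textbf{r}([x,i]) = r_i(x)$. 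By construction $\phi_i^*\textbf{r} = \textbf{r}\circ\phi_i = r_i$, so $\Phi(\textbf{r}) = (r_1,\dots,r_n)$, giving surjectivity. Thus $\Phi$ is a bijective algebra morphism and hence an isomorphism.

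I expect the main conceptual point — rather than a technical obstacle — to be the clean identification of the fibred-product condition with the gluing hypothesis of Lemma \ref{LEM: Adjunction of a smooth function}; once that dictionary is set up, surjectivity is immediate from that lemma and well-definedness is immediate from the cocycle structure of the $f_{ij}$. The only place demanding a little care is verifying that the compatibility condition is genuinely symmetric and consistent across all indices, so that membership in $\prod_{\mathcal{F}} C^\infty(M_i)$ is equivalent to the hypothesis of Lemma \ref{LEM: Adjunction of a smooth function}; this uses conditions \textbf{A1}--\textbf{A3} of Definition \ref{DEF: Adjunctive System}, in particular $f_{ij}^{-1} = f_{ji}$, to see that $r_i = r_j\circ f_{ij}$ on $M_{ij}$ is the same relation whether read from the $i$ or the $j$ side. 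It is worth noting that the earlier Corollary \ref{COR: phii* map is surjective} is subsumed by this theorem but is not needed for its proof; the real engine is Lemma \ref{LEM: Adjunction of a smooth function} together with the open-cover property, and no partition-of-unity argument is required here since we are assembling rather than extending functions.
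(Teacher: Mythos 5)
Your proof is correct, and its overall skeleton (the map $\Phi^*(\textbf{r}) = (\phi_1^*\textbf{r},\dots,\phi_n^*\textbf{r})$, the check that it lands in the fibred product, injectivity from the fact that the $M_i$ cover $\textbf{M}$) coincides with the paper's. The one genuine difference is the surjectivity step. The paper cites Corollary \ref{COR: phii* map is surjective} (each individual $\phi_i^*$ is surjective, which rests on the partition-of-unity extension argument of Theorem \ref{THM: functions non-empty}), whereas you invoke Lemma \ref{LEM: Adjunction of a smooth function} to glue an arbitrary compatible tuple $(r_1,\dots,r_n)$ directly into a global function with $\phi_i^*\textbf{r}=r_i$. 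Your route is the more apt one here: surjectivity of each component map $\phi_i^*$ does not by itself imply that every \emph{compatible} tuple is hit by $\Phi^*$, so the gluing lemma is really what is doing the work, and your observation that no partition-of-unity machinery is needed for this theorem is accurate. The paper's citation is best read as shorthand for the same gluing construction; your version makes the logical dependence explicit and is, if anything, cleaner. Your remark about the symmetry of the compatibility condition under \textbf{A2} is a minor point the paper leaves implicit, and it is handled correctly.
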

\begin{proof}
Consider the map $\Phi^*$ that acts on each smooth function $\textbf{r}$ on $\textbf{M}$ by: $$  \Phi^*(\textbf{r}) := (\phi_1^*r,\cdots, \phi_n^*r).           $$
By the commutativity of the diagram $\mathcal{F}$, we have that $$ \phi_j^*\textbf{r} \circ f_{ij} (x) = \textbf{r}([f_{ij}(x),j]) = \textbf{r}([x,i]) = \phi_i^*\textbf{r}(x), $$ thus $\Phi^*$ takes image in the fibred product $\prod_{\mathcal{F}} C^\infty(M_i)$.
Moreover, the map $\Phi^*$ is also an algebra homomorphism, since all the $\phi_i^*$ are. The map $\Phi^*$ is injective since any pair of distinct functions $\textbf{r}$ and $\textbf{r}'$ on $\textbf{M}$ must differ on one of the $M_i$, and it is surjective by Corollary \ref{COR: phii* map is surjective}. Since every bijective algebra homomorphism is an isomorphism, this completes the proof.
\end{proof}

We saw in the form of Lemma \ref{LEM: universal property of adjunction space} that the adjunction space $\textbf{M}$ is the colimit $\mathcal{F}$ in the category of topological spaces. Subsequent remarks in Section 1.2 confirmed that this colimit also exists in the category of smooth locally-Euclidean manifolds. Since $C^\infty$ is a contravariant functor, in principle we may apply it to all of $\mathcal{F}$ to obtain a diagram in the category of unital associative algebras. The following result confirms that $C^\infty(\textbf{M})$ is the correct limit of this contravariant diagram. 
\begin{lemma}\label{LEM: universal property of Cinf functions}
Let $A$ be a unital associative algebra together with a collection of $I$-many algebra morphisms $\rho_i: A \rightarrow C^\infty(M_i)$. If the maps $\rho_i$ satisfy $\rho_i = \rho_j \circ f_{ij}$ for all $i,j$ in $I$, then there is a unique algebra morphism $\alpha: A \rightarrow \prod_{\mathcal{F}} C^\infty(M_i)$. 
\end{lemma}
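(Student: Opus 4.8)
The plan is to establish this as a standard universal property of a limit (in fact a pullback/equalizer) in the category of unital associative algebras, using the explicit description of $\prod_{\mathcal{F}} C^\infty(M_i)$ given in Theorem \ref{THM: fibred product of smooth functions}. The key observation is that the hypothesis ``$\rho_i = \rho_j \circ f_{ij}$ on $M_{ij}$'' is precisely the compatibility condition that defines the fibred product as a subalgebra of $\bigoplus_{i \in I} C^\infty(M_i)$. So the candidate map essentially writes itself: given $a \in A$, I would define $\alpha(a) := (\rho_1(a), \dots, \rho_n(a))$, the tuple assembled from all the given morphisms.

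First I would verify that $\alpha$ actually lands in the fibred product rather than merely in the direct sum. This is where the hypothesis $\rho_i = \rho_j \circ f_{ij}$ is used: for each $a \in A$ and each pair $i,j$, restricting to $M_{ij}$ gives $\rho_i(a) = \rho_j(a) \circ f_{ij}$, which is exactly the defining constraint of $\prod_{\mathcal{F}} C^\infty(M_i)$. Second, I would check that $\alpha$ is an algebra homomorphism: since the product and unit of the direct sum (and hence of the subalgebra) are defined componentwise, and each $\rho_i$ is an algebra morphism, $\alpha$ preserves products, sums, scalars, and the unit coordinatewise. Third, for uniqueness, I would note that the projection maps $\pi_i : \prod_{\mathcal{F}} C^\infty(M_i) \to C^\infty(M_i)$ satisfy $\pi_i \circ \alpha = \rho_i$ by construction, and any other morphism $\beta$ with $\pi_i \circ \beta = \rho_i$ for all $i$ must agree with $\alpha$ in every coordinate, hence equal it; since the coordinates determine an element of the subalgebra completely, $\beta = \alpha$.

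I should be careful about one subtlety in the statement as written: the conclusion asserts the existence and uniqueness of $\alpha$ but (unlike Lemma \ref{LEM: universal property of adjunction space}) does not explicitly spell out the commutativity condition $\pi_i \circ \alpha = \rho_i$ that makes $\alpha$ canonical. For the universal property to have content I would state and prove this compatibility explicitly, since without it uniqueness fails. This is really a bookkeeping matter rather than a genuine obstacle. The only mildly delicate point is confirming that the projections $\pi_i$ are the structure morphisms realizing $\prod_{\mathcal{F}} C^\infty(M_i)$ as the limit of the contravariant diagram obtained by applying $C^\infty$ to $\mathcal{F}$; but this follows directly from the componentwise definition of the fibred product.

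I expect no serious obstacle here, as the result is formal once the explicit model of the fibred product is in hand. The proof is essentially the verification that a categorical limit, when presented concretely as a subalgebra of a product cut out by equalizer conditions, satisfies its defining universal property. The one place to remain attentive is ensuring the indexing and the direction of composition in the compatibility conditions match those in Theorem \ref{THM: fibred product of smooth functions}, so that the hypothesis on $A$ translates cleanly into membership in the fibred product.
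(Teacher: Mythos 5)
Your proposal is correct and follows essentially the same route as the paper: define $\alpha(a) = (\rho_1(a),\dots,\rho_n(a))$, use the compatibility hypothesis to land in the fibred product, note that $\alpha$ is an algebra morphism componentwise, and derive uniqueness from the fact that the structure maps of the fibred product are projections. Your added remark that the commutativity condition $\pi_i \circ \alpha = \rho_i$ should be stated explicitly for uniqueness to have content is a fair observation, but it does not change the argument.
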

\begin{proof}
Consider the map $\alpha$ defined by $$ \alpha(a) = (\rho_1(a),...,\rho_n(a)). $$ This is clearly an element of the direct sum $\bigoplus_i C^\infty(M_i)$, and moreover the commutativity assumption of the $\rho_i$ ensures that $\alpha$ takes image in the fibred product. That $\alpha$ is an algebra morphism follows from the fact that each $\rho_i$ is. Finally, the uniqueness of $\alpha$ is guaranteed since the morphisms from $\prod_{\mathcal{F}} C^\infty(M_i)$ to $C^\infty(M_i)$ are all projections.
\end{proof}

\section{Vector Bundles}
In this section we will construct vector bundles over our non-Hausdorff manifold $\textbf{M}$. We will start by defining a an adjunction of Hausdorff bundles $E_i$ that are fibred over each of the submanifolds $M_i$. After this, we will argue that every vector bundle over $\textbf{M}$ can be constructed in this manner. We then will generalise Theorem \ref{THM: fibred product of smooth functions} by providing a description of sections of any vector bundle fibred over $\textbf{M}$, eventually finishing with a discussion of Riemannian metrics in the non-Hausdorff setting. All of the basic details of vector bundles can be found in standard texts such as \cite{lee2013smooth, taubes2011differential}.

\subsection{Colimits of Bundles}
Suppose that we are given a collection of rank-$k$ vector bundles $E_i\xrightarrow{\pi_i} M_i$. Since each $M_{ij}$ is an open submanifold of $M_i$, we can always form the restricted bundle $E_{ij}:= \iota_{ij}^*E_i$, which will be an open submanifold of $E_i$. In order to form an adjunction space from this data we need a collection of bundle morphisms $F_{ij}:E_{ij} \rightarrow E_j$ that cover the gluing maps $f_{ij}$. According to Definition \ref{DEF: Adjunctive System}, these maps also need to satisfy the cocycle condition $ F_{jk} \circ F_{ik} = F_{ij}$ on the triple intersections $E_{ij} \cap E_{ik}$. \\

With all of this data in hand, we may use Lemma \ref{LEM: non-Haus smooth manifold} on the collection of  bundles $E_i \xrightarrow{\pi_i} M_i$ to form a non-Hausdorff smooth manifold $\textbf{E}$. We denote by $\chi_i$ the canonical embeddings of each $E_i$ into $\textbf{E}$. In order to describe a bundle structure on $\textbf{E}$, we would first like to define a projection map $\boldsymbol{\pi}:\textbf{E} \rightarrow \textbf{M}$. This amounts to completing the commutative diagram
\begin{center}
    \begin{tikzcd}[row sep=1.9em, column sep=1.8em]
 & E_{ij} \arrow[ld, "F_{ij}"'] \arrow[dd, "\pi_{ij}"', near start] \arrow[rr, "I_{ij}"] &  & E_i \arrow[dd, "\pi_i"] \arrow[ld, "\chi_i"', dashed] \\
E_j \arrow[rr, "\chi_j", near end, crossing over, dashed] \arrow[dd, "\pi_j"'] &  & \textbf{E}  &  \\
 & M_{ij} \arrow[ld, "f_{ij}"', near start] \arrow[rr, "\iota_{ij}", near start] &  & M_i \arrow[ld, "\phi_i"] \\
M_j \arrow[rr, "\phi_j"'] &  & \textbf{M} \arrow[from=uu, "\boldsymbol{\pi}", dashed, near start, crossing over] & 
\end{tikzcd}
\end{center}
simultaneously for all $i,j$ in $I$. \\

We will denote points in $\textbf{E}$ by $[v,i]$, where $v \in E_i$. Strictly speaking this is an abuse of notation, since the equivalence classes of $\textbf{E}$ are different from the equivalence classes used to define points in $\textbf{M}$. However, in this notation the projection map $\boldsymbol{\pi}: \textbf{E} \rightarrow \textbf{M}$ can be easily defined as  $\boldsymbol{\pi}([v,i]) = [\pi_i(v), i]$. Observe that $\boldsymbol{\pi}$ is well defined since our requirement that the bundle morphisms $F_{ij}$ cover the gluing map $f_{ij}$ ensures that 
$$ \boldsymbol{\pi}([F_{ij}(v),j]) =  [\pi_j \circ F_{ij}(v), j] = [f_{ij} \circ \pi_i(v), j] = [\pi_i(v), i]    $$
 for all $v$ in $E_{ij}$. Moreover, the map $\boldsymbol{\pi}$ is manifestly smooth since its local expression around any point $[x,i]$ of $\textbf{M}$ will be the composition $\phi_i \circ \pi_i \circ \chi_i^{-1}$. \\
 
By construction the map $\boldsymbol{\pi}$ is surjective, and furthermore we may endow the preimages $\boldsymbol{\pi}^{-1}([x,i])$ with the structure of a rank-$k$ vector space induced from the fibre $\pi_i^{-1}(x)$ of $E_i$. In our notation, addition and scalar multiplication are given by 
$$ [v,i] + [w,i] = [v+w, i] \ \text{and} \   \lambda[v,i] = [\lambda v,i], $$
respectively. These operations are well-defined by our assumption that the $F_{ij}$ are bundle morphisms, and consequently the fibres of $\textbf{E}$ will indeed be $k$-dimensional vector spaces. \\

In direct analogy to the construction of smooth atlases in Section 1.2, we can describe local trivialisations of $\textbf{E}$ using the bundles $E_i$. Suppose that we have a point $[x,i]$ in $\textbf{M}$, and fix $U$ to be a local trivialisation of the bundle $E_i$ at the point $x$, with trivialising map $\Theta$. Since $\phi_i$ is an open map, we can consider the set $\phi_i(U)$ as an open neighbourhood of $[x,i]$ in $\textbf{M}$. This data can be arranged into the following diagram
\begin{center}
\begin{tikzcd}[row sep=4.2em]
	{U \times \mathbb{R}^k} & {\pi_i^{-1}(U)} & {\boldsymbol{\pi}^{-1}(\phi_i(U))} & {\phi_i(U) \times \mathbb{R}^k} \\
	& {U} & {\phi_i(U)}
	\arrow["{\chi_i}", from=1-2, to=1-3]
	\arrow["{\boldsymbol{\pi}}"', , from=1-3, to=2-3]
	\arrow["{\phi_i}"',  from=2-2, to=2-3]
	\arrow["{p_1}"', from=1-1, to=2-2]
	\arrow["\Theta"', from=1-2, to=1-1]
	\arrow["\Psi", dashed, from=1-3, to=1-4]
	\arrow["{p_1}",  from=1-4, to=2-3]
	\arrow["{\pi_i}",  from=1-2, to=2-2]
\end{tikzcd}
    \end{center}
where the $p_1$ are projections onto the first factor. The trivialising map $\Psi$ can then be defined as the composition $ \Psi := (\phi_i,\text{id}) \circ \Theta\circ\chi_i^{-1}$. Transition functions for local trivialisations around points in the gluing regions $M_{ij}$ will be: 
\begin{align*}
        \Psi_{\beta} \circ \Psi_\alpha^{-1} & = \left((\phi_j, \text{id}) \circ \Theta_\beta \circ \chi_j^{-1} \right) \circ \left((\phi_i, \text{id}) \circ \Theta_\alpha \circ \chi_i^{-1} \right)^{-1} \\
        & = \left((\phi_j, \text{id}) \circ \Theta_\beta\right) \circ\left(  \chi_j^{-1}  \circ  \chi_i \right)\circ\left(  \Theta_\alpha^{-1} \circ (\phi_i^{-1}, \text{id})\right) \\
        & = \left((\phi_j, \text{id}) \circ \Theta_\beta \circ F_{ij} \circ \Theta_\alpha^{-1} \circ (\phi_i^{-1}, \text{id})\right),
    \end{align*}
which mimic the local properties of the bundle morphisms $F_{ij}$. \\

According to our discussion thus far, we may consider $\textbf{E}$ as a rank-$k$ vector bundle fibred over the non-Hausdorff manifold $\textbf{M}$ in which the maps $\chi_i$ of $\textbf{E}$ are injective bundle morphisms that cover the canonical embeddings $\phi_i$. This is summarised in the following result. 
\begin{theorem}\label{THM: colimit of bundles} 
Let $\mathcal{G}:= (\textsf{E}, \textsf{B}, \textsf{F})$ be a triple of sets in which: 
\begin{enumerate}
    \item $\textsf{E} = \{ (E_i, \pi_i, M_i) \}_{i \in I}$ is a collection of rank-$k$ vector bundles,
    \item $\textsf{B} = \{ E_{ij} \}_{i,j \in I}$ consists of the restrictions of the bundles $E_i$ to the intersections $M_{ij}$, and
    \item $\textsf{F} = \{F_{ij} \}_{i,j \in I}$ is a collection of bundle isomorphisms $F_{ij}: E_{ij} \rightarrow E_{ji}$ that cover the gluing maps $f_{ij}$ and satisfy the condition $F_{ik} = F_{jk}\circ F_{ij}$ on the intersections $M_{ijk}$, for all $i,j,k$ in $I$.
\end{enumerate}
Then the resulting adjunction space $\textbf{E} := \bigcup_{\mathcal{G}} E_i$ has the structure of a non-Hausdorff rank-$k$ vector bundle over $\textbf{M}$ in which the canonical inclusions $\chi_i: E_i \rightarrow \textbf{E}$ are bundle morphisms covering the canonical embeddings $\phi_i:M_i \rightarrow \textbf{M}$.
\end{theorem}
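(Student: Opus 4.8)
The plan is to treat this as an assembly theorem: almost all of the structural ingredients have already been exhibited in the running discussion, so the proof reduces to checking that the hypotheses feed correctly into the earlier lemmas and that the resulting maps satisfy the vector-bundle axioms. I would organise it into three stages: first, verify that $\mathcal{G}$, read at the level of total spaces, is a legitimate adjunction system to which Lemma \ref{LEM: non-Haus smooth manifold} applies, so that $\textbf{E}$ is a smooth non-Hausdorff manifold; second, confirm that the projection $\boldsymbol{\pi}$ is a well-defined smooth surjection covered by the $\chi_i$; and third, assemble the fibrewise vector-space structure together with the local trivialisations into a genuine bundle structure.

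For the first stage I would check that $(\textsf{E}, \textsf{B}, \textsf{F})$ satisfies Definition \ref{DEF: Adjunctive System} together with the openness conditions of Lemma \ref{LEM: fij and Aij open implies phi maps open and M gen manifold}. Each $E_{ij} = \iota_{ij}^* E_i$ is an open submanifold of $E_i$ because $M_{ij}$ is open in $M_i$, and each $F_{ij}$ is a smooth open embedding because it is a bundle isomorphism covering the open embedding $f_{ij}$. Condition \textbf{A3} is precisely the assumed cocycle identity $F_{ik} = F_{jk} \circ F_{ij}$, while \textbf{A1} and \textbf{A2} come for free: setting $j = i$ in the cocycle forces $F_{ii} = \mathrm{id}$, and setting $k = i$ forces $F_{ji} \circ F_{ij} = \mathrm{id}$, that is $F_{ji} = F_{ij}^{-1}$. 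With these in hand, Lemma \ref{LEM: non-Haus smooth manifold} yields that $\textbf{E}$ is a smooth (non-Hausdorff) manifold and that each $\chi_i$ is a smooth open embedding.

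Next I would record the facts already established above: the map $\boldsymbol{\pi}([v,i]) := [\pi_i(v), i]$ is well-defined (the covering property $\pi_j \circ F_{ij} = f_{ij} \circ \pi_i$ makes the two representatives of a glued point agree), smooth (its local expression is $\phi_i \circ \pi_i \circ \chi_i^{-1}$), and surjective. The identity $\boldsymbol{\pi} \circ \chi_i = \phi_i \circ \pi_i$ says exactly that $\chi_i$ covers $\phi_i$. I would then equip each fibre $\boldsymbol{\pi}^{-1}([x,i])$ with the vector-space structure transported from $\pi_i^{-1}(x)$ via the stated formulae, checking well-definedness against the requirement that the $F_{ij}$ restrict to linear isomorphisms on fibres.

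The final and most delicate stage is local triviality. For a point $[x,i]$ I would take a trivialising neighbourhood $(U, \Theta)$ of $E_i$ at $x$ and form $\Psi := (\phi_i, \mathrm{id}) \circ \Theta \circ \chi_i^{-1}$ on $\boldsymbol{\pi}^{-1}(\phi_i(U))$, as in the displayed diagram; since $\phi_i$ and $\chi_i$ are open embeddings and $\Theta$ is a fibrewise-linear diffeomorphism, $\Psi$ is a fibre-preserving diffeomorphism onto $\phi_i(U) \times \mathbb{R}^k$ that is linear on fibres. The main obstacle is verifying that these charts are mutually compatible across the gluing regions, i.e. that each overlap map $\Psi_\beta \circ \Psi_\alpha^{-1}$ is smooth and fibrewise linear, including at the Hausdorff-violating points; this is precisely the computation already carried out, which reduces the transition to $(\phi_j, \mathrm{id}) \circ \Theta_\beta \circ F_{ij} \circ \Theta_\alpha^{-1} \circ (\phi_i^{-1}, \mathrm{id})$ and hence to the smooth linear transition data of the bundle isomorphism $F_{ij}$. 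Once this compatibility is secured, the collection $\{\Psi\}$ forms a vector-bundle atlas, so $\boldsymbol{\pi}: \textbf{E} \rightarrow \textbf{M}$ is a rank-$k$ vector bundle and each $\chi_i$ is a bundle morphism over $\phi_i$, which completes the proof.
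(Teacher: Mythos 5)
Your proposal is correct and follows essentially the same route as the paper, whose "proof" of this theorem is precisely the preceding discussion in Section 2.1: apply the smooth-adjunction lemma to the total spaces, check that $\boldsymbol{\pi}([v,i]) = [\pi_i(v),i]$ is well-defined and smooth, transport the fibrewise linear structure, and build trivialisations $\Psi = (\phi_i,\mathrm{id})\circ\Theta\circ\chi_i^{-1}$ with transitions governed by the $F_{ij}$. Your additional observation that conditions \textbf{A1} and \textbf{A2} follow from the cocycle identity by specialising indices is a small but correct refinement of what the paper leaves implicit.
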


We will now confirm that the bundle $\textbf{E}$ described satisfies a certain universal property.
\begin{theorem}\label{THM: universal property of colimit of bundles}
Let $\textbf{E}$ be a vector bundle over $\textbf{M}$ as in Theorem \ref{THM: colimit of bundles}, and let $\textbf{F} \xrightarrow{\boldsymbol{\rho}}\textbf{M}$ be a vector bundle. If there exist bundle morphisms $\xi_i: E_i \rightarrow\textbf{F}$ covering the canonical maps $\phi_i$ satisfying $\xi_i = \xi_j \circ F_{ij}$ for all $i,j$ in $I$. Then there exists a unique bundle morphism $\alpha: \textbf{E} \rightarrow \textbf{F}$ such that $\xi_i = \chi_i \circ \alpha$ for all $i$ in $I$. 
\end{theorem}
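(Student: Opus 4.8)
The plan is to treat this as a formal consequence of the colimit structure of $\textbf{E}$ established in Theorem \ref{THM: colimit of bundles}, and then to verify by hand that the resulting map respects the two extra pieces of structure a bundle morphism must preserve: the projection down to $\textbf{M}$ and the linear structure on fibres. (Note that the intended relation is $\xi_i = \alpha \circ \chi_i$, the only composition that typechecks, since $\chi_i: E_i \to \textbf{E}$ and $\alpha: \textbf{E} \to \textbf{F}$.)

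First I would produce $\alpha$ as a smooth map of total spaces. As discussed in Section 1.2, the adjunction-space construction is a colimit not only in topological spaces but in smooth locally-Euclidean spaces, so the smooth analogue of Lemma \ref{LEM: universal property of adjunction space} applies to the bundle diagram $\mathcal{G}$ exactly as it does to $\mathcal{F}$. The gluing maps of $\textbf{E}$ are the $F_{ij}$ and its canonical embeddings are the $\chi_i$, so the hypothesis $\xi_i = \xi_j \circ F_{ij}$ is precisely the compatibility condition required there. It therefore yields a unique smooth map $\alpha: \textbf{E} \to \textbf{F}$ with $\alpha \circ \chi_i = \xi_i$ for all $i$; concretely $\alpha([v,i]) = \xi_i(v)$, which is well defined exactly because the $\xi_i$ agree along the $F_{ij}$. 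Since the images $\chi_i(E_i)$ cover $\textbf{E}$, uniqueness of $\alpha$ is already built into this statement, so no separate uniqueness argument is needed at the end.

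Next I would check that $\alpha$ covers the identity on $\textbf{M}$, i.e. $\boldsymbol{\rho}\circ\alpha = \boldsymbol{\pi}$. Precomposing with $\chi_i$ gives $\boldsymbol{\rho}\circ\alpha\circ\chi_i = \boldsymbol{\rho}\circ\xi_i = \phi_i\circ\pi_i$, using that each $\xi_i$ covers $\phi_i$. On the other hand $\boldsymbol{\pi}\circ\chi_i = \phi_i\circ\pi_i$, since by Theorem \ref{THM: colimit of bundles} the $\chi_i$ cover the $\phi_i$. Thus $\boldsymbol{\rho}\circ\alpha$ and $\boldsymbol{\pi}$ agree after precomposition with every $\chi_i$, and as the $\chi_i(E_i)$ cover $\textbf{E}$ (equivalently, by the uniqueness clause of the universal property with target $\textbf{M}$) the two maps coincide.

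Finally I would verify fibrewise linearity, which is the only place genuine care is required, because the vector-space structure on a fibre $\boldsymbol{\pi}^{-1}([x,i])$ of $\textbf{E}$ is itself defined through $\chi_i$ via $[v,i]+[w,i]=[v+w,i]$ and $\lambda[v,i]=[\lambda v,i]$. Fixing $[x,i]$, its fibre is $\chi_i(\pi_i^{-1}(x))$, on which $\alpha$ acts by $\alpha([v,i]) = \xi_i(v)$. Since $\xi_i$ is a bundle morphism it is linear on $\pi_i^{-1}(x)$, so $\alpha([v,i]+[w,i]) = \xi_i(v+w) = \xi_i(v)+\xi_i(w) = \alpha([v,i])+\alpha([w,i])$ and $\alpha(\lambda[v,i]) = \xi_i(\lambda v) = \lambda\,\xi_i(v) = \lambda\,\alpha([v,i])$, so $\alpha$ restricts to a linear map on each fibre. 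Together with the previous paragraph this shows $\alpha$ is a bundle morphism. The main obstacle is therefore not any hard analysis but the bookkeeping needed to confirm that the purely formal colimit map interacts correctly with the covering and fibrewise structures, both of which are defined through the $\chi_i$.
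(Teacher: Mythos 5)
Your proposal is correct and follows essentially the same route as the paper's proof: invoke the universal property of the smooth colimit to obtain the unique map $\alpha([v,i]) = \xi_i(v)$, verify $\boldsymbol{\rho}\circ\alpha = \boldsymbol{\pi}$ by precomposing with the $\chi_i$, and check fibrewise linearity from the linearity of the $\xi_i$. Your observation that the stated relation should read $\xi_i = \alpha\circ\chi_i$ is also correct; the paper's proof uses this composition implicitly.
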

\begin{proof}
Since all bundles are smooth manifolds and all bundle morphisms are smooth maps, we may apply the universal property of smooth non-Hausdorff manifolds to conclude that there exists a unique smooth map $\alpha$ from $\textbf{E}$ to $\textbf{F}$ defined by $$ \alpha([v,i]) = \xi_i(v).      $$ 
Observe that since the maps $\chi_i: E_i\rightarrow \textbf{E}$ and the maps $\xi_i: E_i \rightarrow \textbf{F}$ both cover the canonical embeddings $\phi_i$, we have that 
$$  \boldsymbol{\rho} \circ \alpha( [v,i]) =  \boldsymbol{\rho} \circ \xi_i(v) = \phi_i \circ \pi_i(v) = \boldsymbol{\pi}\circ\chi_i(v) = \boldsymbol{\pi}([v,i]),    $$
and thus the map $\alpha$ covers the identity map on $\textbf{M}$. Moreover, $\alpha$ acts linearly on fibres of $\textbf{E}$ since $\alpha$ coincides with the map $\xi_i \circ \chi_i^{-1}$ on each $E_i$, from which we may conclude that $\alpha$ is a bundle morphism.
\end{proof}

According to the above result, we may interpret any vector bundle $\textbf{E}$ constructed according to Theorem \ref{THM: colimit of bundles} as a colimit in the category of smooth vector bundles over locally-Euclidean, second-countable spaces. 

\subsection{A Reconstruction Theorem}
It is well-known that all non-Hausdorff manifolds can be constructed using adjunction spaces. In essence, this result follows from the fact that maximal Hausdorff submanifolds of a given non-Hausdorff manifold form an open cover \cite{hajicek1971causality}. It is then possible to fix a minimal open cover by Hausdorff submanifolds, and then to glue them along the identity maps defined on the pairwise intersections. The details of this result can be found in \cite{luc2020interpreting, oconnell2023non} and \cite{OConnellthesis} in different forms. \\

We will now argue that all vector bundles over $\textbf{M}$ are colimits in the sense of Theorem \ref{THM: colimit of bundles}. The argument is similar to the manifold case: we can always restrict a bundle down to the component spaces $M_i$ to create a collection of Hausdorff bundles that can then be re-identified. Formally, this restriction is obtained by taking the pullbacks of the bundle $\textbf{E}$ along the canonical embeddings $\phi_i$. 


\begin{theorem}\label{THM: reconstruction theorem for colimit bundles}
Let $\textbf{E}$ be some vector bundle over $\textbf{M}$. Then $\textbf{E}$ is isomorphic to a colimit bundle of the form detailed in Theorem \ref{THM: colimit of bundles}. 
\end{theorem}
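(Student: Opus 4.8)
The plan is to reconstruct the gluing data directly from $\textbf{E}$ by restricting it to each Hausdorff piece, and then to recognise the original bundle as the colimit via the universal property of Theorem \ref{THM: universal property of colimit of bundles}. Concretely, for each $i$ I would set $E_i := \phi_i^*\textbf{E}$, the pullback of $\textbf{E}$ along the canonical embedding $\phi_i: M_i \to \textbf{M}$. Since $\phi_i$ is an open smooth embedding, $E_i$ is a genuine rank-$k$ vector bundle over the Hausdorff manifold $M_i$, canonically isomorphic to the restriction of $\textbf{E}$ over $\phi_i(M_i)$, and it carries the tautological bundle map $\widehat{\phi_i}: E_i \to \textbf{E}$ covering $\phi_i$. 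Restricting further gives $E_{ij} := \iota_{ij}^* E_i = E_i|_{M_{ij}}$, exactly as required by the second condition of Theorem \ref{THM: colimit of bundles}.

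Next I would build the transition isomorphisms $F_{ij}: E_{ij} \to E_{ji}$. The key observation is the identity $\phi_i = \phi_j \circ f_{ij}$ on $M_{ij}$, which holds because $[x,i] = [f_{ij}(x),j]$, so both sides send $x$ to the same point of $\textbf{M}$. Writing $W_{ij} := \phi_i(M_{ij}) = \phi_j(M_{ji})$ for the common image in $\textbf{M}$ (the equality following from condition \textbf{A2}), both $\phi_i|_{M_{ij}}$ and $\phi_j|_{M_{ji}}$ are diffeomorphisms onto $W_{ij}$. Pulling $\textbf{E}|_{W_{ij}}$ back along each then exhibits canonical isomorphisms $E_{ij} \cong \textbf{E}|_{W_{ij}} \cong E_{ji}$, and I would define $F_{ij}$ to be their composite. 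By construction $F_{ij}$ covers $f_{ij} = (\phi_j|_{M_{ji}})^{-1} \circ (\phi_i|_{M_{ij}})$ and intertwines the tautological maps, that is, $\widehat{\phi_j} \circ F_{ij} = \widehat{\phi_i}$ on $E_{ij}$.

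The step I expect to require the most care is verifying the cocycle condition $F_{ik} = F_{jk} \circ F_{ij}$ on the triple overlaps $M_{ijk}$, together with the remaining hypotheses of Theorem \ref{THM: colimit of bundles}. Since every $F_{ij}$ is assembled from the single bundle $\textbf{E}$ through the canonical identifications over the images $W_{ij}$, this should reduce to the cocycle condition \textbf{A3} for the gluing maps $f_{ij}$ and the evident coherence of iterated pullback identifications; the argument is essentially formal, but it must be checked carefully so that $\mathcal{G} := (\textsf{E}, \textsf{B}, \textsf{F})$ is a legitimate instance of the data in Theorem \ref{THM: colimit of bundles}. Granting this, the colimit bundle $\bigcup_{\mathcal{G}} E_i$ is defined and is a non-Hausdorff rank-$k$ bundle over $\textbf{M}$.

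Finally, I would produce the isomorphism. Taking $\xi_i := \widehat{\phi_i}: E_i \to \textbf{E}$, the compatibility $\xi_i = \xi_j \circ F_{ij}$ is exactly the intertwining relation noted above, so Theorem \ref{THM: universal property of colimit of bundles} yields a unique bundle morphism $\alpha: \bigcup_{\mathcal{G}} E_i \to \textbf{E}$ covering $\mathrm{id}_{\textbf{M}}$ with $\alpha \circ \chi_i = \widehat{\phi_i}$. It remains to see that $\alpha$ is an isomorphism. Over a point $[x,i]$ the morphism $\alpha$ restricts on the fibre to $\widehat{\phi_i}$, which is the canonical linear isomorphism $(\phi_i^*\textbf{E})_x \cong \textbf{E}_{\phi_i(x)}$, so $\alpha$ is fibrewise invertible. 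Since the $\phi_i(M_i)$ cover $\textbf{M}$ and a bundle morphism covering the identity that is fibrewise invertible is a bundle isomorphism, $\alpha$ is the desired isomorphism, completing the proof.
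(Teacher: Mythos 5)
Your proposal is correct and follows essentially the same route as the paper: pull back $\textbf{E}$ along the canonical embeddings to get $E_i := \phi_i^*\textbf{E}$, glue via the maps $(x,v)\mapsto (f_{ij}(x),v)$ (your identification through the common image $W_{ij}$ is exactly this map), and invoke the universal property of Theorem \ref{THM: universal property of colimit of bundles} with the tautological projections to obtain the isomorphism. Your treatment is in fact slightly more explicit than the paper's about the cocycle verification and about why fibrewise invertibility of $\alpha$ suffices, but the argument is the same.
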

\begin{proof} 
We would like to define a colimit bundle by gluing the pullback bundles $\phi_i^*\textbf{E}$ along bundle morphisms that act by identity on each fiber. Formally, this can be achieved by the data $\mathcal{G}=(\textsf{E},\textsf{B},\textsf{G})$, where:
\begin{itemize}
    \item $\textsf{E}$ consists of the pullback bundles $E_i := \phi_i^*\textbf{E}$,
    \item $\textsf{B}$ consists of the restricted bundles $E_{ij} := (\phi_i^*\textbf{E})|_{M_{ij}}\cong (\phi_i \circ \iota_{ij})^*\textbf{E}$, and
    \item $\textsf{G}$ consists of the maps $F_{ij}:(\phi_i\circ \iota_{ij})^*\textbf{E} \rightarrow  \phi_j^*\textbf{E}$ where $(x,v)\mapsto (f_{ij}(x),v)$.
\end{itemize}
This data satisfies the criteria of Theorem \ref{THM: colimit of bundles}, thus we may conclude that $\textbf{F}:=\adjg E_i$ is a vector bundle over $\textbf{M}$. By construction, the pullback bundles $\phi_i^*\textbf{E}$ cover the canonical embeddings $\phi_i$ via the maps $p_2$ which project onto the second factor of the Cartesian product. This means that pairwise we have the following diagram. 
\begin{center}
\begin{tikzcd}[row sep=1.9em, column sep=0.8em]
 & (\phi_i^*\textbf{E})|_{M_{ij}} \arrow[ld, "F_{ij}"'] \arrow[dd, "p_1"', near start] \arrow[rr, "p_2"] &  & \phi_i^*\textbf{E} \arrow[dd, "p_1"] \arrow[ld, "p_2"'] \\
\phi_j^*\textbf{E} \arrow[rr, "p_2", near end, crossing over] \arrow[dd, "p_1"'] &  & \textbf{E}  &  \\
 & M_{ij} \arrow[ld, "f_{ij}"', near start] \arrow[rr, "\iota_{ij}", near start] &  & M_i \arrow[ld, "\phi_i"] \\
M_j \arrow[rr, "\phi_j"'] &  & \textbf{M} \arrow[from=uu, "\boldsymbol{\pi}", near start, crossing over] & 
\end{tikzcd} 
\end{center}
This diagram commutes since the morphisms $F_{ij}$ act by the identity on their second factors. According to the universal property of the colimit bundle $\textbf{F}$ we may induce a (unique) bundle morphism $\alpha$ from $\textbf{F}$ to $\textbf{E}$. Pointwise, the map $\alpha$ acts by $[(x,v),i] \mapsto v$. This map is clearly bijective, from which it follows that $\alpha$ is a bundle isomorphism.
\end{proof}

\subsection{Sections}

In Theorem \ref{THM: fibred product of smooth functions} we saw that the ring of smooth functions of $\textbf{M}$ is naturally isomorphic to the fibred product $\prod_{\mathcal{F}} C^\infty(M_i)$. In categorical terms, this product can be seen as the limit of a diagram that is formed by applying the $C^\infty$ functor to all of the data in $\mathcal{F}$. We will now extend this result to sections of arbitrary bundles over $\textbf{M}$. Throughout this section we take $\textbf{E}$ to be an arbitrary but fixed vector bundle over $\textbf{M}$, and we will denote by $E_i$ the restricted (Hausdorff) bundles over the subspaces $M_i$. In analogy to Lemma \ref{LEM: Adjunction of a smooth function}, we will first show that every section of the vector bundle $\textbf{E}$ can be described by gluing sections of $E_i$ that are compatible on overlaps.

\begin{lemma}\label{LEM: sections can be glued}
For each $i$ in $I$, let $s_i$ be a section of $E_i$. If the equality $F_{ij} \circ s_{i} = s_j \circ f_{ij}$ holds for all $i,j$ in $I$ then the function $\textbf{s}: \textbf{M}\rightarrow \textbf{E}$ defined by $\textbf{s}([x,i]) = [s_i(x), i]$ is a smooth section of $\textbf{E}$.
\end{lemma}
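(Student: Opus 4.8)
The plan is to follow the template of Lemma \ref{LEM: Adjunction of a smooth function} and verify three things in turn: that $\textbf{s}$ is well-defined, that it is smooth as a map into $\textbf{E}$, and that it is genuinely a section, i.e. that $\boldsymbol{\pi}\circ\textbf{s} = \mathrm{id}_{\textbf{M}}$. Each of these mirrors a step that has already appeared for functions, with the section identity being the one genuinely new ingredient.

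First I would check well-definedness. If $[x,i] = [y,j]$ in $\textbf{M}$, then $x \in M_{ij}$ with $f_{ij}(x) = y$, and I must confirm that $[s_i(x),i]$ and $[s_j(y),j]$ denote the same point of $\textbf{E}$. Since the equivalence relation defining $\textbf{E}$ identifies $(v,i)$ with $(F_{ij}(v),j)$, this reduces to the equality $F_{ij}(s_i(x)) = s_j(f_{ij}(x))$, which is precisely the compatibility hypothesis $F_{ij}\circ s_i = s_j \circ f_{ij}$. This is the one place where the hypothesis is used.

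For smoothness I would appeal to Lemma \ref{LEM: smooth maps}, which reduces the question to smoothness of each restriction $\textbf{s}|_{M_i}$. Under the identification of $M_i$ with $\phi_i(M_i)$, one has $\textbf{s}([x,i]) = [s_i(x),i] = \chi_i(s_i(x))$, so that $\textbf{s}|_{M_i} = \chi_i \circ s_i$. As $s_i$ is a smooth section and $\chi_i$ is a smooth embedding, this composite is smooth, whence $\textbf{s}$ is smooth. Finally, the section property follows from a direct computation using the definition of $\boldsymbol{\pi}$ established earlier: $\boldsymbol{\pi}(\textbf{s}([x,i])) = \boldsymbol{\pi}([s_i(x),i]) = [\pi_i(s_i(x)),i] = [x,i]$, the last equality holding because $\pi_i\circ s_i = \mathrm{id}_{M_i}$.

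I do not expect any serious obstacle here; the argument is essentially a decorated version of the function-gluing lemma. The only point demanding genuine care is the abuse of notation flagged in the text, namely that the brackets $[\,\cdot\,,i]$ on $\textbf{E}$ refer to the equivalence relation built from the bundle maps $F_{ij}$, whereas those on $\textbf{M}$ come from the $f_{ij}$. Keeping these two relations distinct is exactly what makes the well-definedness step and the compatibility hypothesis line up correctly, and it is also what ensures that $\boldsymbol{\pi}$ intertwines them so that the section identity holds.
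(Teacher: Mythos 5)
Your proposal is correct and follows essentially the same route as the paper: well-definedness via the compatibility hypothesis $F_{ij}\circ s_i = s_j\circ f_{ij}$, smoothness by restricting to the open cover $\{M_i\}$ and invoking Lemma \ref{LEM: smooth maps} (the paper writes the restriction as $\chi_i \circ s_i \circ \phi_i^{-1}$, which matches your version after the identification of $M_i$ with $\phi_i(M_i)$), and the section identity by the same direct computation with $\boldsymbol{\pi}$. No gaps.
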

\begin{proof}
Observe first that $\textbf{s}$ is well-defined, since: 
$$  \textbf{s}([f_{ij}(x),j]) = [s_j \circ f_{ij}(x), j] = [F_{ij} \circ s_{i}(x), j] = [s_i(x),i].     $$ 
The map $\textbf{s}$ is a right-inverse of the projection map $\boldsymbol{\pi}$ since $\boldsymbol{\pi}\circ \textbf{s}([x,i]) = \boldsymbol{\pi}([s_i(x),i]) = [x,i]$ for all $[x,i]$ in $\textbf{M}$. Finally, since the restriction of $\textbf{s}$ to each $M_i$ equals $\chi_i \circ s_i \circ \phi_i^{-1}$, the map $\textbf{s}$ is smooth by Lemma \ref{LEM: smooth maps}.
\end{proof}


Following on from the approach of Section 1.3.1, we may now prove an analogue to Theorem \ref{THM: functions non-empty}.
\begin{theorem}\label{THM: non-zero sections exist}
Any section $s_i$ of $E_i$ can be extended to a section $\textbf{s}$ of $\textbf{E}$. 
\end{theorem}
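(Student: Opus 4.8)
The plan is to mirror the proof of Theorem \ref{THM: functions non-empty}, replacing real-valued functions with sections and the Gluing Lemma (Lemma \ref{LEM: Adjunction of a smooth function}) with its sectional analogue (Lemma \ref{LEM: sections can be glued}). I would proceed by induction on the size of the indexing set $I$, exactly as in the function case. The key conceptual point is that the extension procedure used for functions relied on three ingredients: restricting to a closed gluing region, transferring along the extended diffeomorphism $\overline{f_{ij}}$ of boundaries, and extending off a closed set via a partition of unity on a single \emph{Hausdorff} manifold $M_i$. Each of these has a bundle-theoretic counterpart, so the structure carries over.

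For the base case, suppose $\textbf{M} = M_1 \cup_{f_{12}} M_2$ and, without loss of generality, that we are given $s_2 \in \Gamma(E_2)$. I would restrict $s_2$ to the closed submanifold $Cl^{M_2}(M_{12})$ and pull it back along the extended bundle isomorphism covering $\overline{f_{12}}$ to obtain a section of $E_1$ defined on the closed set $Cl^{M_1}(M_{12})$. Concretely, this is the section $x \mapsto F_{12}^{-1}\bigl(s_2(\overline{f_{12}}(x))\bigr)$, which by construction satisfies the compatibility relation $F_{12}\circ s_1 = s_2 \circ f_{12}$ on the open overlap $M_{12}$. The crucial step is then to extend this section, defined only on a closed subset of $M_1$, to a global section $s_1 \in \Gamma(E_1)$. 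Here I invoke the Extension Lemma for sections of vector bundles over \emph{Hausdorff} manifolds (the sectional analogue of Lemma 2.26 of \cite{lee2013smooth}), which is available precisely because $M_1$ is Hausdorff and hence admits partitions of unity. With $s_1$ and $s_2$ agreeing appropriately on $M_{12}$, Lemma \ref{LEM: sections can be glued} produces the desired global section $\textbf{s}$ on $\textbf{M}$ restricting to $s_2$ on $M_2$.

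For the inductive step, I would decompose $\textbf{M}$ as the inductive colimit $\textbf{N} \cup_f M_{n+1}$ via Theorem \ref{THM: inductive colimit}, where $\textbf{N}$ is built from the first $n$ manifolds and the gluing region is $A := \bigcup_{i\leq n} M_{(n+1)i}$. Given a section $s_n$ of $E_n$, the induction hypothesis furnishes a section $\textbf{s}$ of the restricted bundle over $\textbf{N}$ extending $s_n$. I then restrict $\textbf{s}$ to the closure $Cl^{\textbf{N}}(f(A))$, transfer it along the extended bundle isomorphism covering the collective diffeomorphism $\overline{f}$ of Theorem \ref{THM: functions non-empty}, and obtain a section over the closed set $Cl^{M_{n+1}}(A)$ inside $M_{n+1}$. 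A partition of unity on the Hausdorff manifold $M_{n+1}$ extends this to a global section $s'$ of $E_{n+1}$, and Lemma \ref{LEM: sections can be glued} glues $\textbf{s}$ and $s'$ into a global section of $\textbf{E}$ restricting to $s_n$.

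The main obstacle I anticipate is bookkeeping the bundle isomorphisms rather than any deep difficulty. In the function case the transferred object is scalar-valued and the composition $r_2 \circ \overline{f_{12}}$ needs no extra data; here the fibres must be identified via the maps $F_{ij}$, so I must verify that restricting the $F_{ij}$ to the closed gluing regions yields genuine bundle isomorphisms over $Cl(M_{ij})$ covering $\overline{f_{ij}}$. This uses the smoothness of $\overline{f_{ij}}$ from Remark \ref{REM: assumptions in this paper} together with the cocycle condition, which guarantees that the collective transfer map $\overline{F}$ over $Cl(A)$ is well-defined and agrees on overlaps exactly as the map $\overline{f}$ does in the function proof. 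Once this compatibility is checked, the partition-of-unity extension is purely local on a Hausdorff manifold and proceeds verbatim as in the classical case.
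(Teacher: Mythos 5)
Your proposal is correct and follows essentially the same route as the paper: induction on $|I|$, pulling the given section back across the extended boundary diffeomorphism $\overline{f_{12}}$ (identifying fibres via $F_{12}$), extending off the closed set $Cl^{M_1}(M_{12})$ by the Extension Lemma for sections on the Hausdorff piece, and gluing via Lemma \ref{LEM: sections can be glued}, with the inductive step run through the inductive colimit $\textbf{N}\cup_f M_{n+1}$. The compatibility issue you flag — that the fibre identifications $F_{ij}$ must extend over the closed gluing regions — is exactly the point the paper handles by interpreting the transferred section as a section of the closed subbundle $Cl^{\textbf{E}}(E_{12})$, so there is no substantive difference.
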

\begin{proof}
We will proceed as in Theorem \ref{THM: functions non-empty}, that is, by induction on the size of indexing set $I$. Suppose first that $\textbf{M}$ is a binary adjunction space $M_1 \cup_{f_{12}} M_2$, with $E$ isomorphic to $E_1 \cup_{F_{12}} E_2$. As in Theorem \ref{THM: functions non-empty}, we may take a non-zero section on $E_2$, restrict it to the closure $Cl^{M_2}(M_{12})$, and then use the diffeomorphism $\overline{f_{12}}$ to pull back $s_2$ to some smooth section $s_{12}$ defined on the submanifold $Cl^{M_1}(M_{12})$ of $M_1$. Using the fact that $F_{12}$ is a bundle isomorphism onto its image, we may interpret $s_{12}$ as a section of the closed subbundle $Cl^{\textbf{E}}(E_{12})$ defined over $Cl^{M_1}(M_{12})$.

In order to extend $s_{12}$ into the rest of $M_1$, we will need to apply a generalisation of the Extension Lemma for sections of vector bundles  (cf. Lemma 10.12 of \cite{lee2013smooth}). The idea behind this generalised Extension Lemma is essentially the same as in the case of smooth functions -- we may always endow the closure $Cl^{M_1}(M_{12})$ with an outward-pointing collar neighbourhood $U$, and then use the flow of the associated vector field to extend $s_{12}$ to all of $U$. Using a partition of unity subordinate to the open cover $\{U, M_1 \backslash Cl^{M_1}(M_{12}) \}$ of $M_1$, we may then create a section $s_1$ of $E_1$.

Observe that by construction, any section $s_1$ created according to the above procedure will restrict to $s_{12}$ on $M_{12}$. A global section $\textbf{s}$ of $\textbf{E}$ then exists by applying of Lemma \ref{LEM: sections can be glued} to $s_1$ and $s_2$. The inductive case follows the same structure as Theorem \ref{THM: functions non-empty}, this time using the Extension Lemma for vector bundles instead.
\end{proof}
Using the above, we can now create an argument similar to that of Theorem \ref{THM: fibred product of smooth functions}. 
\begin{theorem}\label{THM: fibred product of sections}
For any vector bundle $\textbf{E}$ over $\textbf{M}$, we have that $$  \Gamma(\textbf{E}) \cong \prod_{\mathcal{F}} \Gamma(E_i).   $$ 
\end{theorem}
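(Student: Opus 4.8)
The plan is to follow the template of Theorem \ref{THM: fibred product of smooth functions} almost verbatim, replacing the pullback of functions by the pullback of sections and replacing the scalar compatibility $r_i = r_j \circ f_{ij}$ by the bundle-twisted compatibility $F_{ij} \circ s_i = s_j \circ f_{ij}$. Here the fibred product is understood as the $C^\infty(\textbf{M})$-submodule
$$ \prod_{\mathcal{F}} \Gamma(E_i) := \big\{ (s_1, \dots, s_n) \in \bigoplus_{i\in I} \Gamma(E_i) \ \big| \ F_{ij}\circ s_i = s_j \circ f_{ij} \text{ on } M_{ij} \text{ for all } i,j \in I \big\}, $$
where the $C^\infty(\textbf{M})$-action is inherited from the algebra isomorphism $C^\infty(\textbf{M}) \cong \prod_{\mathcal{F}} C^\infty(M_i)$ of Theorem \ref{THM: fibred product of smooth functions}. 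First I would define the candidate map $\Phi^*: \Gamma(\textbf{E}) \to \bigoplus_i \Gamma(E_i)$ by $\Phi^*(\textbf{s}) = (s_1, \dots, s_n)$, where $s_i := \chi_i^{-1} \circ \textbf{s} \circ \phi_i$ is the pullback of $\textbf{s}$ along the canonical embedding, using the identification $\phi_i^* \textbf{E} \cong E_i$ of Theorem \ref{THM: colimit of bundles}. Since $\boldsymbol{\pi}\circ\textbf{s} = \mathrm{id}$ forces $\textbf{s}\circ\phi_i$ to take values in $\chi_i(E_i)$, each $s_i$ is a well-defined smooth section of $E_i$, and indeed $\textbf{s}|_{M_i} = \chi_i\circ s_i \circ \phi_i^{-1}$ as in Lemma \ref{LEM: sections can be glued}.

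Next I would verify that $\Phi^*$ lands in the fibred product. The key point is the gluing identity $\chi_i = \chi_j\circ F_{ij}$ on $E_{ij}$, which holds by the definition of the adjunction $\textbf{E}$. Combined with $\phi_j\circ f_{ij} = \phi_i$ on $M_{ij}$, a short computation gives $s_j \circ f_{ij} = \chi_j^{-1}\circ\textbf{s}\circ\phi_i = F_{ij}\circ\chi_i^{-1}\circ\textbf{s}\circ\phi_i = F_{ij}\circ s_i$, which is exactly the compatibility condition defining $\prod_{\mathcal{F}}\Gamma(E_i)$. I would then check that $\Phi^*$ is a homomorphism of $C^\infty(\textbf{M})$-modules: each restriction map is $\mathbb{R}$-linear, and for $\textbf{f}\in C^\infty(\textbf{M})$ the $i$-th component of $\Phi^*(\textbf{f}\,\textbf{s})$ equals $(\phi_i^*\textbf{f})\, s_i$, so $\Phi^*$ intertwines the scalar actions through the algebra isomorphism of Theorem \ref{THM: fibred product of smooth functions}.

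Finally I would establish bijectivity. Injectivity is immediate: if $\Phi^*(\textbf{s}) = 0$ then $\textbf{s}$ vanishes on each $M_i$, and since the $M_i$ cover $\textbf{M}$ we conclude $\textbf{s} = 0$. For surjectivity, any element $(s_1,\dots,s_n)$ of the fibred product satisfies precisely the hypothesis of Lemma \ref{LEM: sections can be glued}, which produces a global section $\textbf{s}$ of $\textbf{E}$ with $\textbf{s}([x,i]) = [s_i(x),i]$; by construction $\Phi^*(\textbf{s}) = (s_1,\dots,s_n)$. A bijective module homomorphism is an isomorphism, which completes the proof.

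I expect the only genuine subtlety — as opposed to the functions case — to lie in the bookkeeping of the bundle maps: confirming that $s_i = \chi_i^{-1}\circ\textbf{s}\circ\phi_i$ is a legitimate smooth \emph{section} rather than merely a smooth map, and that the identity $\chi_i = \chi_j\circ F_{ij}$ correctly converts the single global section $\textbf{s}$ into a tuple obeying the $F_{ij}$-twisted compatibility. It is worth noting that, unlike Theorem \ref{THM: fibred product of smooth functions}, surjectivity here is supplied entirely by the gluing Lemma \ref{LEM: sections can be glued}; the extension Theorem \ref{THM: non-zero sections exist} is not strictly needed for surjectivity onto the fibred product, although it is what guarantees that each individual restriction $\Gamma(\textbf{E})\to\Gamma(E_i)$ is itself surjective.
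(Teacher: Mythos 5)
Your proposal is correct and follows essentially the same route as the paper, which simply transplants the argument of Theorem \ref{THM: fibred product of smooth functions} using Lemma \ref{LEM: sections can be glued} and Theorem \ref{THM: non-zero sections exist}; your explicit spelling-out of the $F_{ij}$-twisted compatibility and the identity $\chi_i=\chi_j\circ F_{ij}$ is exactly the intended bookkeeping. Your closing observation is also accurate: surjectivity onto the fibred product is supplied by the gluing lemma alone, with the extension theorem only needed to show each individual restriction $\Gamma(\textbf{E})\to\Gamma(E_i)$ is surjective.
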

\begin{proof}
The argument is the same as that of Theorem \ref{THM: fibred product of smooth functions}, except that this time we use Lemma \ref{LEM: sections can be glued} and Theorem \ref{THM: non-zero sections exist}. 
\end{proof}

\subsection{Riemannian Metrics}
In order to discuss Čech cohomology in the next section, we will first need to confirm that metrics exist on arbitrary vector bundles fibred over $\textbf{M}$. The precise construction of such metrics will be similar to the approach of Lemmas \ref{THM: functions non-empty} and \ref{LEM: sections can be glued}. However, in order to use this technique we first need to confirm the following. 

\begin{lemma}\label{LEM: tensor bundle colimit}
Let $\textbf{E}$ be a vector bundle over $\textbf{M}$ with colimit representation $\bigcup_{\mathcal{G}} E_i$. Then the $(0,2)$ tensor bundle $T^{(0,2)}\textbf{E}$ is isomorphic to the colimit bundle $$ \bigcup_{\mathcal{G}} T^{(0,2)}E_i .     $$
\end{lemma}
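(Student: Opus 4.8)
The plan is to apply the fiberwise tensor functor $T^{(0,2)}$ to the entire colimit diagram $\mathcal{G}$ and then check that the result is again an adjunction system of the form required by Theorem \ref{THM: colimit of bundles}, whose colimit can subsequently be identified with $T^{(0,2)}\textbf{E}$. The essential point is that $T^{(0,2)}$ is functorial with respect to fiberwise linear isomorphisms, so the gluing data of $\textbf{E}$ transports automatically to gluing data for the tensor bundles, with the diagram axioms following from functoriality rather than from fresh computation.

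First I would construct the candidate diagram $\mathcal{G}' = (\textsf{E}', \textsf{B}', \textsf{F}')$. Here $\textsf{E}'$ consists of the tensor bundles $T^{(0,2)}E_i \to M_i$, and $\textsf{B}'$ of their restrictions to the gluing regions $M_{ij}$; since forming $(0,2)$-tensors commutes with restriction to an open submanifold, one has $(T^{(0,2)}E_i)|_{M_{ij}} \cong T^{(0,2)}(E_{ij})$, so these restrictions are exactly the tensor bundles of the $E_{ij}$. For the gluing maps, each bundle isomorphism $F_{ij}\colon E_{ij} \to E_{ji}$ induces a fiberwise pushforward of bilinear forms, namely a bundle isomorphism $T^{(0,2)}F_{ij}\colon T^{(0,2)}E_{ij} \to T^{(0,2)}E_{ji}$ given on the fiber over $x$ by $b \mapsto b \circ (F_{ij}^{-1} \times F_{ij}^{-1})$. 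Because $F_{ij}$ covers $f_{ij}$ and the construction is fiberwise, $T^{(0,2)}F_{ij}$ also covers $f_{ij}$. The pushforward of bilinear forms is a covariant operation, so the cocycle relation $F_{ik} = F_{jk}\circ F_{ij}$ on triple intersections immediately yields $T^{(0,2)}F_{ik} = T^{(0,2)}F_{jk} \circ T^{(0,2)}F_{ij}$. Thus $\mathcal{G}'$ satisfies all the hypotheses of Theorem \ref{THM: colimit of bundles}, and $\adjg T^{(0,2)}E_i$ is a bona fide vector bundle over $\textbf{M}$.

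It then remains to identify this colimit with $T^{(0,2)}\textbf{E}$. Applying $T^{(0,2)}$ to each canonical embedding $\chi_i\colon E_i \to \textbf{E}$ (a fiberwise isomorphism onto its image $\textbf{E}|_{M_i}$) produces bundle morphisms $T^{(0,2)}\chi_i\colon T^{(0,2)}E_i \to T^{(0,2)}\textbf{E}$ covering $\phi_i$. Functoriality together with the gluing identity $\chi_i = \chi_j \circ F_{ij}$ on $E_{ij}$ gives the compatibility $T^{(0,2)}\chi_i = (T^{(0,2)}\chi_j)\circ T^{(0,2)}F_{ij}$, so the universal property of Theorem \ref{THM: universal property of colimit of bundles} furnishes a unique bundle morphism $\alpha\colon \adjg T^{(0,2)}E_i \to T^{(0,2)}\textbf{E}$ covering the identity on $\textbf{M}$. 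Finally I would verify that $\alpha$ is a fiberwise isomorphism: over a point $[x,i]$ it coincides with the pushforward $T^{(0,2)}(\chi_i)_x$ between $T^{(0,2)}(E_i)_x$ and $T^{(0,2)}\textbf{E}_{[x,i]}$, which is bijective because $(\chi_i)_x$ is invertible. A bundle morphism covering the identity that is a fiberwise isomorphism is an isomorphism of bundles, which completes the argument.

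The step I expect to require the most care is the bookkeeping around variance: one must ensure the induced gluing maps point in the correct direction and satisfy the cocycle condition rather than its inverse. Since $(0,2)$-tensors are contravariant in a general linear map but the $F_{ij}$ are isomorphisms, the pushforward $b \mapsto b\circ(F_{ij}^{-1}\times F_{ij}^{-1})$ restores covariance, and it is precisely this covariance that lets the cocycle condition transport without reversal. Once this is pinned down, everything else reduces to the elementary fiberwise fact that an invertible linear map induces an invertible map on bilinear forms, combined with the already-established universal property of colimit bundles; alternatively, one could bypass the explicit universal-property step by invoking Theorem \ref{THM: reconstruction theorem for colimit bundles} applied to $T^{(0,2)}\textbf{E}$ and noting that $\phi_i^*(T^{(0,2)}\textbf{E}) \cong T^{(0,2)}(\phi_i^*\textbf{E}) = T^{(0,2)}E_i$.
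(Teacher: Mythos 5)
Your proposal is correct and follows essentially the same route as the paper's (sketched) proof: apply the tensor construction fiberwise to the gluing data $F_{ij}$, observe that the cocycle condition transports by functoriality, and then use the universal property of Theorem \ref{THM: universal property of colimit of bundles} to identify the resulting colimit with $T^{(0,2)}\textbf{E}$. Your treatment is in fact more explicit than the paper's, particularly on the variance bookkeeping for the induced maps $b \mapsto b\circ(F_{ij}^{-1}\times F_{ij}^{-1})$, which the paper passes over by simply saying one glues along the pullbacks of $(0,2)$-tensors along the $F_{ij}$.
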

\begin{proof}
For readability we will only provide a sketch, since the details of this argument can already be found in \cite{OConnellthesis}. Let us denote by $F_{ij}$ the bundle morphisms that are used to construct $\textbf{E}$ from the $E_i$. Since the $F_{ij}$ are diffeomorphisms from $E_{ij}$ to $E_{ji}$, the differentials $dF_{ij}$ will be bundle isomorphisms between the tangent bundles $TE_{ij}$ and $TE_{ji}$. Moreover, a basic property of differentials confirms that  
 $$   dF_{ik} = d(F_{jk} \circ F_{ij}) = dF_{jk} \circ d F_{ij}.$$
Consequently, we may glue each tangent bundle $TE_i$ along the differentials $dF_{ij}$ to create the bundle $\adjg TE_i$. The differentials $d\chi_i:TE_i \rightarrow T\textbf{E}$ of the canonical embedding maps $\chi_i: E_i \rightarrow \textbf{E}$ can then be used together with Theorem \ref{THM: universal property of colimit of bundles} to conclude that the bundle $\adjg TE_i$ is isomorphic to $T\textbf{E}$. A similar argument can be made for the bundle $T^{(0,2)}\textbf{E}$, except that this time we glue along the maps that pull back the $(0,2)$-tensors along the diffeomorphisms $F_{ij}$.    
\end{proof}
A metric tensor on $\textbf{E}$ may be seen as a global non-vanishing section of the bundle $T^{(0,2)}\textbf{E}$ that is symmetric and positive-definite in its local expression.  
Using the above result, we may readily construct metrics on any vector bundle fibred over $\textbf{M}$.

\begin{theorem}\label{THM: E admits a bundle metric}
    Any vector bundle $\textbf{E}$ over $\textbf{M}$ admits a metric. 
\end{theorem}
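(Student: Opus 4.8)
The plan is to realise a bundle metric on $\textbf{E}$ as a fibrewise symmetric, positive-definite section of the tensor bundle $T^{(0,2)}\textbf{E}$, and then to construct such a section by the same inductive gluing strategy used in Theorem \ref{THM: non-zero sections exist}, taking care that positive-definiteness survives each step. By Lemma \ref{LEM: tensor bundle colimit} we have $T^{(0,2)}\textbf{E} \cong \adjg T^{(0,2)}E_i$, so by Theorem \ref{THM: fibred product of sections} a global section of $T^{(0,2)}\textbf{E}$ is precisely a compatible family of $(0,2)$-tensor fields $g_i$ on the Hausdorff bundles $E_i$ satisfying the overlap condition of Lemma \ref{LEM: sections can be glued}. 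Since being symmetric and positive-definite is a pointwise, fibrewise condition, and since the $M_i$ cover $\textbf{M}$, the glued section $\textbf{g}$ will automatically be a metric as soon as each $g_i$ is one. The task therefore reduces to producing a compatible family $\{g_i\}$ of \emph{metrics} on the $E_i$ that agree on overlaps.

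First I would induct on the size of $I$ using the inductive colimit decomposition $\textbf{M} \cong \textbf{N} \cup_f M_{n+1}$ of Theorem \ref{THM: inductive colimit}. In the base case $M_1 \cup_{f_{12}} M_2$, I would begin with any metric $g_2$ on $E_2$ (which exists by the usual partition-of-unity construction on the Hausdorff manifold $M_2$), restrict it to $Cl^{M_2}(M_{12})$, and transport it along $\overline{f_{12}}$ and $F_{12}$ to a metric $g_{12}$ on the restricted bundle over $Cl^{M_1}(M_{12}) \subseteq M_1$. The inductive step is identical, with $g_2$ replaced by the metric $\textbf{g}$ already constructed on $\textbf{N}$, restricted to the closure of the gluing region and transported along the diffeomorphism $\overline{f}$ of boundaries. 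In either case the problem becomes the following: extend a metric $g_{12}$ defined on a closed subbundle over $Cl^{M_1}(M_{12})$ to a global metric $g_1$ on $E_1$ that restricts to $g_{12}$ on $M_{12}$, after which Lemma \ref{LEM: sections can be glued} assembles $g_1$ with $g_2$ (resp. $\textbf{g}$) into a metric on the whole space.

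The hard part is exactly this extension, since the flow-based extension used for \emph{arbitrary} sections in Theorem \ref{THM: non-zero sections exist} need not preserve positive-definiteness. Here I would exploit the fact that the symmetric positive-definite forms on a fibre form a convex cone. Concretely, I would first extend $g_{12}$ to an arbitrary smooth symmetric $(0,2)$-tensor $\tilde g$ on a collar neighbourhood $U$ of $Cl^{M_1}(M_{12})$; since $\tilde g = g_{12}$ is positive-definite on the closed submanifold, continuity guarantees it stays positive-definite on a possibly smaller neighbourhood, to which I shrink $U$. I would then choose any global metric $h_1$ on $E_1$ (via an ordinary partition of unity on $M_1$) together with a partition of unity $\{\rho, 1-\rho\}$ subordinate to $\{U, M_1 \setminus Cl^{M_1}(M_{12})\}$ with $\rho \equiv 1$ on $Cl^{M_1}(M_{12})$, and set $g_1 := \rho\,\tilde g + (1-\rho)\,h_1$. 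At every point $g_1$ is either one of the two positive-definite symmetric forms or a genuine convex combination of them, hence positive-definite and symmetric; and because $\rho \equiv 1$ on $Cl^{M_1}(M_{12})$ we have $g_1 = g_{12}$ there, so in particular the overlap condition of Lemma \ref{LEM: sections can be glued} is met on $M_{12}$.

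Running this construction through the induction yields a compatible family of metrics on the $E_i$, which by the colimit description of $T^{(0,2)}\textbf{E}$ glues to a global section $\textbf{g}$ of $T^{(0,2)}\textbf{E}$. As positive-definiteness and symmetry are verified on each member of the open cover $\{M_i\}$, the section $\textbf{g}$ is a genuine metric on $\textbf{E}$, completing the argument. The only genuinely new input beyond the section-existence machinery already established is the convexity observation, which is what allows the partition-of-unity step to be carried out without leaving the class of metrics.
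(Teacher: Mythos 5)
Your proposal is correct and follows essentially the same route as the paper: view the metric as a positive-definite symmetric section of $T^{(0,2)}\textbf{E}\cong\bigcup_{\mathcal{G}}T^{(0,2)}E_i$, run the inductive extension-and-gluing argument of Theorem \ref{THM: non-zero sections exist}, and use the convexity of the cone of bundle metrics to keep positive-definiteness through the partition-of-unity step. The only difference is that you spell out explicitly how convexity enters (blending the transported metric with an auxiliary global metric $h_1$ via $\rho\,\tilde g+(1-\rho)h_1$), which the paper leaves as a one-line remark.
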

\begin{proof}
According to Theorem \ref{THM: reconstruction theorem for colimit bundles}, we may view $\textbf{E}$ as a colimit of bundles $E_i$ that are fibred over the Hausdorff submanifolds $M_i$. Lemma \ref{LEM: tensor bundle colimit} then allows us to express the tensor bundle $T^{(0,2)}\textbf{E}$ as a colimit of the tensor bundles $T^{(0,2)}E_i$. We may then apply the construction of Theorem \ref{THM: non-zero sections exist} and use an inductive series of partitions of unity defined on each $M_i$ in order to construct a global section $\textbf{g}$ of the bundle $T^{(0,2)}\textbf{E}$. Note that we may guarantee that $\textbf{g}$ is a bundle metric if we start with bundle metrics $g_i$ of $E_i$ and use the fact that bundles metrics are closed under convex combinations. 
\end{proof}

In the next section we will need to appeal to the existence of Riemannian metrics defined on $\textbf{M}$. Fortunately, these exist as an application of the previous result to $T\textbf{M}$.

\begin{corollary}\label{COR: M admits Riemannian metric}
$\textbf{M}$ admits a Riemannian metric. 
\end{corollary}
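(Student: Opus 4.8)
The plan is to recognise this as an immediate special case of Theorem \ref{THM: E admits a bundle metric}. By definition, a Riemannian metric on $\textbf{M}$ is nothing other than a bundle metric on the tangent bundle $T\textbf{M}$: a global non-vanishing section of $T^{(0,2)}T\textbf{M}$ that is symmetric and positive-definite in every local expression. Thus it suffices to exhibit $T\textbf{M}$ as a vector bundle over $\textbf{M}$ of the type governed by our results, and then apply the metric-existence theorem to it.

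First I would confirm that $T\textbf{M}$ is a genuine vector bundle over $\textbf{M}$ in our sense. Since $\textbf{M}$ is smooth by Remark \ref{REM: assumptions in this paper}, its tangent bundle $T\textbf{M}$ is a smooth rank-$d$ vector bundle over $\textbf{M}$, and by Theorem \ref{THM: reconstruction theorem for colimit bundles} it admits a colimit representation of the form appearing in Theorem \ref{THM: colimit of bundles}. Concretely, one may glue the tangent bundles $TM_i$ of the Hausdorff submanifolds along the differentials $df_{ij}$, which are bundle isomorphisms covering the $f_{ij}$ and which satisfy the cocycle condition $df_{ik} = df_{jk} \circ df_{ij}$ by the chain rule --- exactly as in the proof of Lemma \ref{LEM: tensor bundle colimit}.

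With this identification in hand, I would invoke Theorem \ref{THM: E admits a bundle metric} applied to $\textbf{E} = T\textbf{M}$, which directly produces a bundle metric $\textbf{g}$ on $T\textbf{M}$. By the definition recalled above, $\textbf{g}$ is precisely a Riemannian metric on $\textbf{M}$, which completes the argument.

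Since the substantive work has already been carried out in Theorem \ref{THM: E admits a bundle metric} (and, beneath it, in Theorem \ref{THM: non-zero sections exist} and Lemma \ref{LEM: tensor bundle colimit}), there is no genuine obstacle here. The only point requiring any care is the bookkeeping verification that the tangent bundle really does fit the colimit framework, and this is guaranteed by the reconstruction theorem; everything else is a direct specialisation.
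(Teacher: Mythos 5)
Your proposal is correct and follows exactly the paper's route: the corollary is obtained by applying Theorem \ref{THM: E admits a bundle metric} to $\textbf{E} = T\textbf{M}$. The additional bookkeeping you supply (that $T\textbf{M}$ fits the colimit framework via the differentials $df_{ij}$, as in Lemma \ref{LEM: tensor bundle colimit}) is implicit in the paper's one-line argument and is a reasonable elaboration.
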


\section{Čech Cohomology and Line Bundles}

Theorems \ref{THM: colimit of bundles} and \ref{THM: universal property of colimit of bundles} tell us that any line bundle over $\textbf{M}$ exists as a colimit of line bundles defined on each of the submanifolds $M_i$. In this section we will explore how this relationship manifests in the language of Čech cohomology. To begin with, we will proceed generally and study the Čech cohomology of $\textbf{M}$ in terms of the cohomologies of the $M_i$.\\

Before getting to any results, we will first briefly recall the formalism of Čech cohomology. Aside from a slight change in notation, we will closely follow 
\cite{bott1982differential}. Consider an arbitrary topological space $X$, with an open cover $\mathcal{U} := \{ U_\alpha \ | \ \alpha \in A\}$, and let $G$ be an Abelian group. We will use index notation to abbreviate multiple intersections of open sets in $\mathcal{U}$, that is, we will write $ U_{\alpha_0 \cdots \alpha_q} := U_{\alpha_0} \cap \cdots U_{\alpha_q}$. A degree-$q$ Čech cochain $\check{f}$ consists of a choice of constant functions from the $(q+1)$-ary intersections of elements of $\mathcal{U}$ to the group $G$. In symbols: 
$$  \check{f} := \big{\{} f_{\alpha_0 \cdots \alpha_q}: U_{\alpha_0 \cdots \alpha_q} \rightarrow G \ \big{|} \ \alpha_0 < \cdots < \alpha_q \in A \ \textrm{and} \ f_{\alpha_0 \cdots \alpha_q} \ \text{is constant} \big{ \}}. 
$$
The space of Čech $q$-cochains, which we will denote by $\check{C}^q(X, \mathcal{U}, G)$, consists of all sets $\check{f}$ of the above form. This space naturally inherits an Abelian group structure from $G$. We denote by $\delta$ the Čech differential, which raises the degree of each cochain by one. In additive notation, the map $\delta$ acts as follows: 
$$  \delta f_{\alpha_0 \cdots \alpha_q \alpha_{q+1}}(x) =  \sum_{i} (-1)^i f_{\alpha_0 \cdots \hat{\alpha_i} \cdots \alpha_{q+1}}(x),    $$ where here the caret notation $\hat{\alpha_i}$ denotes exclusion of that index. 
The Čech differential is a group homomorphism that squares to zero, and we denote the resulting cohomology groups by $\check{H}^q(X, \mathcal{U}, G)$. We may define these groups for any open cover $\mathcal{U}$, and the collection of all $\check{H}^q(X, \mathcal{U}, G)$ can be made into a directed system of groups once ordered by refinement. The Čech cohomology of $X$ is then defined as the direct limit: $$    \check{H}^q(X) := \lim_{\mathcal{U}} \check{H}^q(X, \mathcal{U}).    $$
In what follows we will need to make use of the pullback of Čech cochains, so we recall this notion now. Suppose that $\varphi: X\rightarrow Y$ is a continuous map and $\mathcal{U}$ is an open cover of the topological space $Y$. We may define an open cover $\mathcal{V}$ of $X$ by considering all sets of the form $\varphi^{-1}(U)$, where $U$ is an element of $\mathcal{U}$. We may then define a map $\varphi^*: \check{C}^q(Y, \mathcal{U}, G)\rightarrow \check{C}^q(X, \mathcal{V}, G)$ by demanding that $\varphi^*f_{\alpha_0 \cdots \alpha_q}(x) = f_{\alpha_0 \cdots \alpha_q}\circ \varphi(x)$ for all locally-constant functions on $Y$. By construction, the pullback $\varphi^*$ is a group homomorphism.

\subsection{Čech Cohomology via a Mayer-Vietoris Sequence}
We will now set about expressing the Čech cohomology $\check{H}^q(\textbf{M}, G)$ in terms of the groups $\check{H}^q(M_i, G)$. We will obtain this relationship inductively, so throughout this section we will assume that $\textbf{M}$ can be expressed as the colimit of two Hausdorff manifolds $M_1$ and $M_2$, in accordance with Remark \ref{REM: assumptions in this paper}.\footnote{Although we are working with non-Hausdorff manifolds primarily, it should be noted that the following derivation will also work for more general colimits of Hausdorff manifolds that do not assume the ``homeomorphic boundary" condition.} We will also assume that $\textbf{M}$ is endowed with a fixed but arbitrary open cover $\mathcal{U}$. We will not need to appeal to the particular structure of the Abelian group $G$, so we suppress this in our notation. \\

We start with a derivation of a Mayer-Vietoris sequence for $\textbf{M}$. In order to do so, we will need to make use of the pullbacks of cochains. According to our configuration, we have the following commutative diagram of pullbacks 

\begin{center}
\begin{tikzcd}[column sep=1.8em]
M_{12} \arrow[dd, "f_{12}"'] \arrow[rr, "\iota_{12}"] &  & M_1 \arrow[dd, "\phi_1"] &                                      &    & {\check{C}^q(M_{12}, \mathcal{U}^{12}) }                      &  & {\check{C}^q(M_{1}, \mathcal{U}^{1}) } \arrow[ll, "\iota_{12}^*"']                                 \\
                                                      &  &                          & {} \arrow[r, "\check{C}^q", maps to] & {} &                                                               &  &                                                                                                    \\
M_2 \arrow[rr, "\phi_2"']                             &  & \textbf{M}               &                                      &    & {\check{C}^q(M_{2}, \mathcal{U}^{2}) } \arrow[uu, "f_{12}^*"] &  & {\check{C}^q(\textbf{M}, \mathcal{U})} \arrow[uu, "\phi_1^*"'] \arrow[ll, "\phi_2^*"]
\end{tikzcd}
\end{center}

where here $\mathcal{U}^i = \{ \phi_i^{-1}(U) \ | \ U \in \mathcal{U}\}$, and $\mathcal{U}^{12}$ is defined similarly. As a convention we will index these three covers using the relevant subsets of the indexing set of $\mathcal{U}$. \\

We can combine the various pullback maps in order to create a single sequence from the above diagram. We will consider two maps: 
$ \Phi^*$, which acts on cochains on $\textbf{M}$ by concatenating the pullbacks $\phi_i^*$ (as in \ref{THM: fibred product of smooth functions}, \ref{THM: fibred product of sections}), and the map $\iota_{12}^* - f_{12}^*$, which pulls back a pair of cochains defined on the $M_i$ to the subset $M_{12}$ and then computes their difference. The following result confirms that this arrangement of functions forms a short exact sequence. 
\begin{lemma}\label{LEM: 
Binary MV exact sequence}
The sequence
$$ 0 \rightarrow \check{C}^q(\textbf{M}, \mathcal{U})\overset{\Phi^*}\longrightarrow \check{C}^q(M_1, \mathcal{U}^1) \oplus \check{C}^q(M_2, \mathcal{U}^2) \xrightarrow{\iota_{12}
^*-f_{12}^*} \check{C}^q(M_{12}, \mathcal{U}^{12})\rightarrow 0         $$
is exact for all $q$ in $\mathbb{N}$.
\end{lemma}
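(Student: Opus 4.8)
The plan is to treat this as the standard Čech Mayer--Vietoris short exact sequence attached to the two-set open cover $\{M_1, M_2\}$ of $\textbf{M}$. Using the canonical embeddings (Lemma \ref{LEM: fij and Aij open implies phi maps open and M gen manifold}) I would identify each $M_i$ with the open subset $\phi_i(M_i) \subseteq \textbf{M}$, so that $\phi_1(M_1) \cup \phi_2(M_2) = \textbf{M}$ and $\phi_1(M_{12}) = \phi_1(M_1) \cap \phi_2(M_2)$. The structural input I will lean on throughout is the commuting square $\phi_1 \circ \iota_{12} = \phi_2 \circ f_{12}$, which is just the defining relation $[x,1] = [f_{12}(x),2]$ of the adjunction space. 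Since the induced covers $\mathcal{U}^1$, $\mathcal{U}^2$ and $\mathcal{U}^{12}$ are all indexed by the same set as $\mathcal{U}$, every map in the sequence acts componentwise on the functions $f_{\alpha_0 \cdots \alpha_q}$, and I can verify exactness one multi-index at a time.

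For left exactness I would argue that $\Phi^*$ is injective directly from the covering property: if $\phi_1^* \check{f}$ and $\phi_2^* \check{f}$ both vanish, then each component $f_{\alpha_0\cdots\alpha_q}$ vanishes on $\phi_1^{-1}(U_{\alpha_0\cdots\alpha_q})$ and on $\phi_2^{-1}(U_{\alpha_0\cdots\alpha_q})$; as these two sets cover $U_{\alpha_0\cdots\alpha_q}$, the component is zero and hence $\check{f}=0$. Exactness in the middle splits into two inclusions. That $\mathrm{im}\,\Phi^* \subseteq \ker(\iota_{12}^* - f_{12}^*)$ is immediate from the commuting square, since $\iota_{12}^* \phi_1^* = (\phi_1 \iota_{12})^* = (\phi_2 f_{12})^* = f_{12}^* \phi_2^*$. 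For the reverse inclusion I would take a pair $(\check{h}_1, \check{h}_2)$ with $\iota_{12}^* \check{h}_1 = f_{12}^* \check{h}_2$ and glue it to a cochain $\check{f}$ on $\textbf{M}$: on each intersection $U_{\alpha_0\cdots\alpha_q}$ I assign the value read off from whichever $M_i$ the set meets, the hypothesis guaranteeing that the two prescriptions agree wherever the set meets the overlap $M_{12}$. Then $\phi_i^* \check{f} = \check{h}_i$ by construction.

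The main obstacle is surjectivity of $\iota_{12}^* - f_{12}^*$, which is the step where an ordinary Mayer--Vietoris argument invokes a partition of unity to extend data off the overlap. Here the situation is easier precisely because Čech cochains are (locally) constant $G$-valued functions: given $\check{g}$ on $M_{12}$ I would set $\check{h}_2 = 0$ and define $\check{h}_1$ by assigning to each intersection $\phi_1^{-1}(U_{\alpha_0\cdots\alpha_q})$ the same constant value that $\check{g}$ takes on the smaller set $\phi_1^{-1}(U_{\alpha_0\cdots\alpha_q}) \cap M_{12}$ (and the value $0$ on intersections disjoint from $M_{12}$). Extending a constant to a larger domain is free, so no analytic bump function is needed, and restricting back recovers $\check{g}$, giving $(\iota_{12}^* - f_{12}^*)(\check{h}_1, \check{h}_2) = \check{g}$. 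The only genuine care required is the bookkeeping with the locally-constant structure on disconnected intersections --- in particular on sets meeting both $M_1$ and $M_2$ but not $M_{12}$ --- which is handled cleanly by working componentwise as set up in the first step.
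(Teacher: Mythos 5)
Your proposal is correct and follows essentially the same route as the paper's proof: injectivity of $\Phi^*$ from the fact that $M_1$ and $M_2$ cover $\textbf{M}$, middle exactness by gluing a compatible pair into a cochain defined casewise on sets meeting $M_1$ or $M_2$, and surjectivity of $\iota_{12}^*-f_{12}^*$ by the same extension-by-zero/constant-extension device (the paper likewise exploits that cochains are constant, so no partition of unity is needed). The only cosmetic difference is that you phrase injectivity via vanishing components rather than the paper's contrapositive, which changes nothing.
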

\begin{proof}
The map $\Phi^*$ is injective since any two cochains on $\textbf{M}$ must differ somewhere on $M_1$ or $M_2$, thus they will differ once pulled back by the appropriate $\phi_i$ map. The surjectivity of the difference map $\iota_{12}^* - f_{12}^*$ follows from a standard ``extension by zero" argument: any cochain $\check{f}$ on $M_{12}$ is defined according to open sets that are restrictions of larger open sets in $\mathcal{U}^1$ and $\mathcal{U}^2$. For $\iota^*_{12}$ we may define $$  \check{g}_{\alpha_0 \cdots \alpha_q}(x) = \begin{cases}
    \check{f}_{\alpha_0 \cdots \alpha_q}(x) & \textrm{if} \ U_{\alpha_0 \cdots \alpha_q} \cap M_{12} = W_{\alpha_0 \cdots \alpha_q} \\
    0 & \text{otherwise}.
\end{cases}         $$ The cochain $\check{g}$ will then restrict to $\check{f}$ under the pullback by $\iota_{12}$.\\

Exactness is thus proved with an argument that $Im(\Phi^*)=\ker{(\iota_{12}^* - f_{12}^*)}$. Observe first that the commutativity of the diagram preceding this Lemma ensures that $\iota_{12}^* \circ \phi_1^* = f_{12}^*\circ \phi_2^*$, and thus $Im(\Phi^*)\subseteq \ker{(\iota_{12}^* - f_{12}^*)}$. For the converse inclusion, suppose that $\check{f}$ and $\check{g}$ are cochains on $M_1$ and $M_2$, respectively, such that $\iota_{12}^*\check{f} = f_{12}^*\check{g}$. Spelling this out, this means that 
\begin{equation}\tag{$*$}
    f_{\alpha_0 \cdots \alpha_q} \circ \iota_{12}(x) = g_{\alpha_0 \cdots \alpha_q} \circ f_{12}(x)
\end{equation}
for all open sets $W_{\alpha_0}, \cdots , W_{\alpha_q}$ in $\mathcal{U}^{12}$. In order to define a cochain $\check{h}$ on $\textbf{M}$ that satisfies $\Phi^*(\check{h}) = ( \check{f}, \check{g})$, we need to specify a locally-constant function for all of the $(q+1)$-ary intersections of open sets in $\mathcal{U}$. So, suppose that $U_{\alpha_0}, \cdots, U_{\alpha_q}$ are arbitrary elements of $\mathcal{U}$. For any $[x,i]$ in the intersection $U_{\alpha_0 \cdots \alpha_q}$, we define the map $h_{\alpha_0 \cdots \alpha_q}$ as follows: 
$$  h_{\alpha_0 \cdots \alpha_q}([x,i]) = \begin{cases}
    f_{\alpha_0 \cdots \alpha_q}\circ \phi_1^{-1}([x,i]) & \text{if} \ U_{\alpha_0 \cdots \alpha_q} \cap M_1 \neq \emptyset \\
    g_{\alpha_0 \cdots \alpha_q}\circ \phi_2^{-1}([x,i]) & \text{if} \ U_{\alpha_0 \cdots \alpha_q} \cap M_2 \neq \emptyset
\end{cases}.             $$
There is a potential ambiguity in this definition, so we must confirm that the value of $h_{\alpha_0 \cdots \alpha_q}$ does not depend on the choice of $M_1$ or $M_2$. So, suppose that the set $U_{\alpha_0 \cdots \alpha_q}$ intersects both $M_1$ and $M_2$. Then for any element in $M_{12}$, we have two representatives of the equivalent class $[x,1]$, namely $x$ and $f_{12}(x)$. We then have that 
$$  f_{\alpha_0 \cdots \alpha_q}\circ \phi_1^{-1}([x,1]) = f_{\alpha_0 \cdots \alpha_q}\circ \iota_{12}(x) \stackrel{(*)}{=} g_{\alpha_0 \cdots \alpha_q} \circ f_{12}(x) =  g_{\alpha_0 \cdots \alpha_q}\circ \phi_2^{-1}([f_{12}(x),2])   $$
as required. By construction, $\check{h}$ pulls back to $(\check{f}, \check{g})$ under the map $\Phi^*$. We may therefore conclude that $\ker{(\iota_{12}^* - f_{12}^*)} \subseteq Im(\Phi^*)$, from which the equality follows. 
\end{proof}

As a consequence of the above result, we observe that the Čech cochains of $\textbf{M}$ admit a fibred product structure.
\begin{corollary}\label{THM: Čech cochains are fibred product}
The Čech cochains on $\textbf{M}$ satisfy the equality 
\begin{align*}
\check{C}^q(\textbf{M}, \mathcal{U}) & \cong \check{C}^q(M_1, \mathcal{U}^1) \times_{\check{C}^q(M_{12}, \mathcal{U}^{12})} \check{C}^q(M_{2}, \mathcal{U}^2) \\
    & = \Big\{ \left(\check{f}^1, \check{f}^2\right) \in \check{C}^q(M_1, \mathcal{U}^1) \oplus \check{C}^q(M_2, \mathcal{U}^2) \ \Big| \ \iota_{12}^*\check{f}^1 = f_{12}^* \check{f}^2 \Big\}.
\end{align*}
for all $q\in \mathbb{N}$.
\end{corollary}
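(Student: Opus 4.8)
The plan is to read off this corollary as an immediate algebraic restatement of the exact sequence in Lemma \ref{LEM: Binary MV exact sequence}. The key observation is that the fibred product $\check{C}^q(M_1, \mathcal{U}^1) \times_{\check{C}^q(M_{12}, \mathcal{U}^{12})} \check{C}^q(M_2, \mathcal{U}^2)$ is by definition exactly the set of pairs $(\check{f}^1, \check{f}^2)$ on which the two pullbacks $\iota_{12}^*$ and $f_{12}^*$ agree, i.e. the kernel of the difference map $\iota_{12}^* - f_{12}^*$. So the content of the corollary is precisely the identification $\ker(\iota_{12}^* - f_{12}^*) \cong \check{C}^q(\textbf{M}, \mathcal{U})$.

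First I would invoke exactness of the sequence in Lemma \ref{LEM: Binary MV exact sequence} at the middle term: exactness says $\mathrm{Im}(\Phi^*) = \ker(\iota_{12}^* - f_{12}^*)$. Second, I would use injectivity of $\Phi^*$ (exactness at the left term, i.e. the leading $0$) to conclude that $\Phi^*$ restricts to a group isomorphism from $\check{C}^q(\textbf{M}, \mathcal{U})$ onto its image $\mathrm{Im}(\Phi^*)$. Combining these two facts gives
\[
\check{C}^q(\textbf{M}, \mathcal{U}) \xrightarrow{\ \Phi^*\ } \mathrm{Im}(\Phi^*) = \ker(\iota_{12}^* - f_{12}^*) = \check{C}^q(M_1, \mathcal{U}^1) \times_{\check{C}^q(M_{12}, \mathcal{U}^{12})} \check{C}^q(M_2, \mathcal{U}^2),
\]
which is the desired isomorphism. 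Since $\Phi^*$ is already known to be a group homomorphism (it is the concatenation of the group homomorphisms $\phi_i^*$), no further checking of algebraic structure is needed.

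There is essentially no hard step here, since the difference map $\iota_{12}^* - f_{12}^*$ was constructed precisely so that its kernel is the fibred product; the only thing worth spelling out is that the defining condition of the fibred product, $\iota_{12}^* \check{f}^1 = f_{12}^* \check{f}^2$, is literally the equation $(\iota_{12}^* - f_{12}^*)(\check{f}^1, \check{f}^2) = 0$. The mildest point of care is to state that the isomorphism is one of Abelian groups rather than anything richer, and to note that it holds for every fixed $q$ and every fixed cover $\mathcal{U}$, exactly matching the quantifier in the statement. Accordingly, I would keep the proof to a couple of lines, pointing back to the two exactness properties established in Lemma \ref{LEM: Binary MV exact sequence}.
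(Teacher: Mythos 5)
Your proposal is correct and matches the paper's own treatment: the corollary is stated there as an immediate consequence of Lemma \ref{LEM: Binary MV exact sequence}, with the isomorphism obtained exactly as you describe, from injectivity of $\Phi^*$ together with exactness at the middle term identifying $\mathrm{Im}(\Phi^*)$ with $\ker(\iota_{12}^*-f_{12}^*)$, which is the fibred product by definition. No gaps.
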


Routine computations verify that the maps $\Phi^*$ and $\iota_{12}^* - f_{12}^*$ commute with the Čech differential $\delta$. Consequently, we may expand the data of Lemma \ref{LEM: Binary MV exact sequence} into the long exact sequence
$$     \cdots \xrightarrow{\delta^*} \check{H}^q(\textbf{M}) \xrightarrow{\Phi^*} \check{H}^q(M_1) \oplus \check{H}^q(M_2) \xrightarrow{\iota_{12}^* - f_{12}^*} \check{H}^q(M_{12}) \xrightarrow{\delta^*} \check{H}^{q+1}(\textbf{M}) \rightarrow \cdots                 $$
where here the maps $\delta^*$ are the connecting homomorphisms induced as an application of the Snake Lemma. It should be noted that here we have tacitly passed to the cover-independent version of Čech cohomology. In fact, the relationship above is induced from a combination of refinement maps and the universal property of direct limits -- the details can be found in the Appendix.  \\

According to the above sequence, it will not be the case that the Čech cohomology of $\textbf{M}$ will equal a fibred product of the groups $\check{H}^q(M_1)$ and $\check{H}^q(M_2)$. Indeed, if we try and proceed in similar spirit to Theorems \ref{THM: fibred product of smooth functions} and \ref{THM: fibred product of sections} and argue for the injectivity of the $\Phi^*$, we will obtain an obstruction coming from the previous cohomology group of $M_{12}$. This obstruction can be read off of the Mayer-Vietoris sequence as: $$ \frac{\check{H}^q(\textbf{M})}{Im(\delta^*)} = \frac{\check{H}^q(\textbf{M})}{Ker(\Phi)} = Im(\Phi) = Ker(\iota_{12}^* - f_{12}^*) =  \check{H}^q(M_1) \times_{\check{H}^q(M_{12})} \check{H}^q(M_2).  $$
With an eye towards Section 3.4, we make the following observation. 
\begin{lemma}\label{LEM: Čech cohomology binary fibre product}
Suppose that $\textbf{M}$ is a binary adjunction space in which $M_{1}$, $M_2$ and $M_{12}$ are all connected. Then 
\begin{align*}
\check{H}^1(\textbf{M}) & \cong \check{H}^1(M_1)\times_{\check{H}^1(M_{12})}  \check{H}^1(M_2) \\
    & = \Big\{ \left([\check{f}^1], [\check{f}^2]\right) \in \check{H}^q(M_1) \oplus \check{H}^q(M_2) \ \Big| \ \iota_{12}^*[\check{f}^1] = f_{12}^* [\check{f}^2] \Big\}.
\end{align*}

\end{lemma}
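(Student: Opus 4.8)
The plan is to read the result directly off the Mayer--Vietoris long exact sequence preceding the statement, combined with the general fibre-product identity already derived. That identity reads
$$ \check{H}^1(\textbf{M})/Im(\delta^*) \cong \check{H}^1(M_1) \times_{\check{H}^1(M_{12})} \check{H}^1(M_2), $$
where $\delta^*: \check{H}^0(M_{12}) \to \check{H}^1(\textbf{M})$ is the connecting homomorphism at the relevant stage. Hence it suffices to show that, under the three connectedness hypotheses, this particular $\delta^*$ is the zero map: once $Im(\delta^*)=0$, the quotient on the left collapses and $\Phi^*$ becomes the desired explicit isomorphism onto the fibre product.

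First I would invoke exactness at the degree-zero stage of the sequence, namely
$$ \check{H}^0(M_1) \oplus \check{H}^0(M_2) \xrightarrow{\iota_{12}^* - f_{12}^*} \check{H}^0(M_{12}) \xrightarrow{\delta^*} \check{H}^1(\textbf{M}). $$
By exactness, $\delta^*$ vanishes precisely when $\iota_{12}^* - f_{12}^*$ is surjective onto $\check{H}^0(M_{12})$. The entire problem therefore reduces to establishing this surjectivity, which is where the connectedness assumptions enter.

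Next I would carry out the crucial computation of the degree-zero groups. Since a locally constant function on a connected space is globally constant, the connectedness of $M_1$, $M_2$ and $M_{12}$ yields $\check{H}^0(M_1) \cong \check{H}^0(M_2) \cong \check{H}^0(M_{12}) \cong G$. The pullback of a constant cochain is the same constant, so under these identifications both $\iota_{12}^*$ and $f_{12}^*$ reduce to the identity $G \to G$. Consequently $\iota_{12}^* - f_{12}^*$ becomes the map $(a,b) \mapsto a - b$ on $G \oplus G$, which is visibly surjective. This forces $\delta^* = 0$, whence $\Phi^*$ is injective with image equal to $Ker(\iota_{12}^* - f_{12}^*)$, i.e. the fibre product, completing the argument.

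The step I expect to require the most care is this last identification: one must check that, after passing to the cover-independent (direct-limit) version of Čech cohomology, $\iota_{12}^*$ and $f_{12}^*$ genuinely reduce to the identity on $G$. This is exactly where all three connectedness hypotheses are used, and why the clean formula is special to $q=1$. If any of $M_1$, $M_2$, $M_{12}$ were disconnected, the degree-zero groups would be larger sums of copies of $G$ and the difference map would generically fail to surject, reinstating a nonzero $Im(\delta^*)$; and for $q>1$ the relevant obstruction comes from $\check{H}^{q-1}(M_{12})$, which connectedness alone does not control.
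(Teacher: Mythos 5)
Your argument is correct and is essentially the paper's own proof: both read off the vanishing of the connecting homomorphism $\delta^*:\check{H}^0(M_{12})\to\check{H}^1(\textbf{M})$ from the degree-zero portion of the Mayer--Vietoris sequence, using connectedness to identify each $\check{H}^0$ with $G$. Your version is in fact slightly more explicit than the paper's (which just says ``working from the left, exactness implies $\delta^*=0$''), since you spell out that the pullbacks act as the identity on constants and hence the difference map $(a,b)\mapsto a-b$ is surjective.
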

\begin{proof}
According to Theorem 1.6 of \cite{oconnell2023non}, $\textbf{M}$ is connected. Since the $0^{\text{th}}$ Čech cohomology group of a connected topological space is always rank $1$, the Mayer-Vietoris sequence reduces to the following.
$$ 0 \rightarrow G \rightarrow G \oplus G \rightarrow G \xrightarrow{\delta^*} \check{H}^1(\textbf{M}) \rightarrow \check{H}^1(M_1)\oplus \check{H}^1(M_2) \rightarrow \check{H}^1(M_{12}) \rightarrow \cdots $$ 
Working from the left, exactness of this sequence implies that the labelled connecting homomorphism $\delta^*$ is the zero map, from which the result follows.
\end{proof}

\subsection{The General Case}

We will now discuss a version of Theorem \ref{LEM: Binary MV exact sequence} that is suitable for a non-Hausdorff manifold $\textbf{M}$ expressed as the colimit of finitely-many manifolds $M_i$. We will start with a generalisation of Lemma \ref{LEM: Binary MV exact sequence} that takes into account the multiple intersections $M_{i_1\cdots i_p}$. \\

To begin with, we need to generalise the difference maps $\iota_{12}^* - f_{12}^*$. Since there are now $n$-many submanifolds $M_i$, we will have multiple pairwise intersections $M_{ij}$. We may order the pairs of indices lexicographically and define a map 
$$  \tilde{\delta}: \bigoplus_i \check{C}^q(M_i, \mathcal{U}^i) \rightarrow \bigoplus_{i<j}\check{C}^q(M_{ij}, \mathcal{U}^{ij}) $$
which combines the difference maps $\iota_{ij}^* - f_{ij}^*$ in the obvious manner. 
Since there are now multiple intersections $M_{i_1 \cdots i_p}$, each with their own spaces of cochains $\check{C}^q(M_{i_1 \cdots i_q}, \mathcal{U}^{i_1 \cdots i_q})$, we will also have multiple pullbacks 
$$\iota_{i_1 \cdots \hat{i} \cdots i_p}^*: \check{C}^q(M_{i_1 \cdots \hat{i} \cdots i_p}, \mathcal{U}^{i_1 \cdots \hat{i} \cdots i_p}) \rightarrow \check{C}^q(M_{i_1 \cdots i_p}, \mathcal{U}^{i_1 \cdots i_p}),$$
where here $\iota_{i_1 \cdots \hat{i} \cdots i_p}: M_{i_1 \cdots i_p} \rightarrow M_{i_1 \cdots \hat{i} \cdots i_p}$ is the inclusion map that forgets the $i^{th}$ index. We can similarly define difference maps 
$$\tilde{\delta}: \bigoplus_{i_1 < \cdots < i_p} \check{C}^q(M_{i_1 \cdots i_p}, \mathcal{U}^{i_1 \cdots i_p}) \rightarrow 
\bigoplus_{i_1 < \cdots < i_{p+1}} \check{C}^q(M_{i_1 \cdots i_{p+1}}, \mathcal{U}^{i_1 \cdots i_{p+1}})$$
to act on cochains $\check{f}_{i_1 \cdots i_{p+1}}$ defined on each $M_{i_1 \cdots i_{p+1}}$ by 
$$  (\tilde{\delta}\check{f})_{i_1 \cdots i_{p+1}} = \sum_{i} (-1)^{i+1} \iota^*_{i_1 \cdots \hat{i} \cdots i_{p+1}} \check{f}_{i_1 \cdots \hat{i} \cdots i_{p+1}}.\footnote{Note that the extra sum in the exponent of $-1$ is due to our convention that the indexing set $I$ starts at one instead of zero.}$$ 

In essence, the map $\tilde{\delta}$ plays the role of the differential that one would define for the Čech complex constructed from the open cover $\{M_i\}$. In a similar manner to the binary case of Section 3.2, this map $\tilde{\delta}$ can be shown to square to zero and to commute with the Čech differential $\delta$ defined for $\check{C}^q(\textbf{M}, \mathcal{U})$. \\

We will now set about proving the general version of Lemma \ref{LEM: Binary MV exact sequence}, this time taking into account the multiple intersections $M_{i_1 \cdots i_p}$. 

\begin{theorem}\label{THM: generalised MV exact sequence}
The sequence $$ 0\rightarrow \check{C}^q(\textbf{M}, \mathcal{U}) \xrightarrow{\  \Phi^* \ } \bigoplus_i \check{C}^q(M_i, \mathcal{U}^i) \xrightarrow{\ \tilde{\delta}\ } \cdots \xrightarrow{ \ \tilde{\delta} \ } \check{C}^q(M_{1 \cdots n}, \mathcal{U}^{1 \cdots n}) \rightarrow 0     $$
is exact for all $q$ in $\mathbb{N}$.
\end{theorem}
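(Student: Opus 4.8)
The plan is to prove exactness by induction on the number $n=|I|$ of Hausdorff pieces, recognising the displayed sequence as the augmented Čech complex of the finite open cover $\{M_i\}_{i\in I}$ of $\textbf{M}$ with coefficients in the presheaf $W\mapsto\check{C}^q(W,\mathcal{U}|_W)$: the augmentation is $\Phi^*$, the higher maps are the Čech-type differentials $\tilde\delta$, and the $p$-th term collects the cochains on the $p$-fold overlaps $M_{i_1\cdots i_p}$. Because $q$ only plays the role of a passive parameter and every arrow is assembled from the same pullbacks along the fixed inclusions and gluing maps, the induction is purely homological and runs uniformly in $q$; the base case $n=2$ is precisely Lemma \ref{LEM: Binary MV exact sequence}. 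It is worth stressing at the outset why the usual proof of the generalised Mayer--Vietoris principle is unavailable here: that argument produces a contracting homotopy from a partition of unity subordinate to the cover, and by Lemma \ref{LEM: no partition of unity for Hausdorff cover} no such partition exists for $\{M_i\}$. The inductive colimit structure is what will replace it.

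For the inductive step I would invoke Theorem \ref{THM: inductive colimit} to write $\textbf{M}\cong\textbf{N}\cup_f M_{n+1}$, where $\textbf{N}$ is the colimit of $M_1,\dots,M_n$ and the gluing takes place along the region $A:=\bigcup_{i\leq n}M_{(n+1)i}$. Applying Lemma \ref{LEM: Binary MV exact sequence} to this last binary adjunction yields a short exact sequence relating $\check{C}^q(\textbf{M})$ to $\check{C}^q(\textbf{N})$, $\check{C}^q(M_{n+1})$ and $\check{C}^q(A)$. I would then assemble three complexes---the augmented complex of $\textbf{N}$ with respect to $\{M_i\}_{i\leq n}$, the one-term complex of $M_{n+1}$, and the augmented complex of $A$ with respect to its own cover $\{M_{(n+1)i}\}_{i\leq n}$---into a double complex whose rows are copies of the binary sequence and whose columns are handled by the induction hypothesis (here $A$ is an open, hence Hausdorff, subset of $M_{n+1}$ covered by $n$ pieces, so it too is a colimit to which the hypothesis applies). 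Exactness of the rows (Lemma \ref{LEM: Binary MV exact sequence}) together with exactness of the columns (induction) then forces the associated total complex, which is the augmented complex for $\textbf{M}$, to be exact. Injectivity of $\Phi^*$ is immediate and identical to the binary case: two distinct cochains on $\textbf{M}$ already differ on some $M_i$, hence after pulling back along $\phi_i$.

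I expect the principal obstacle to be the bookkeeping forced by the fact that $M_{n+1}$ is glued to $\textbf{N}$ only along the \emph{proper} subregion $A$, rather than along all of $M_{n+1}$. This means that the ``last-piece'' column genuinely distinguishes $\check{C}^q(M_{n+1})$ from $\check{C}^q(A)$, so the connecting homomorphisms of the double complex are not formal and must be tracked carefully when identifying the kernel of the first differential with $\check{C}^q(\textbf{M})$ (as in Corollary \ref{THM: Čech cochains are fibred product}); indeed it is precisely the binary sequence, supplying the glue between $\check{C}^q(M_{n+1})$ and $\check{C}^q(A)$, that prevents a naive filtration from collapsing to the wrong answer. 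The maps realising the columns as exact---in particular the surjectivity at each top stage---are to be produced by the same ``extension by zero'' device used in the proof of Lemma \ref{LEM: Binary MV exact sequence}, whose validity rests on each gluing region $M_{ij}$ being open in $M_i$ (Lemma \ref{LEM: fij and Aij open implies phi maps open and M gen manifold}). Verifying that these extension operators are compatible across the double complex, and that the cocycle condition \textbf{A3)} guarantees the overlap data fit together consistently, is where the real work of the induction lies.
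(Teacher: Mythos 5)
Your strategy is essentially the paper's: induction on $n$ with Lemma \ref{LEM: Binary MV exact sequence} as base case, the inductive colimit $\textbf{M}\cong\textbf{N}\cup_f M_{n+1}$ of Theorem \ref{THM: inductive colimit}, the induction hypothesis applied both to $\textbf{N}$ and to $A=\bigcup_{i\leq n}M_{(n+1)i}$ (a union of $n$ open subsets of the Hausdorff manifold $M_{n+1}$), extension by zero for the surjectivity statements, and a two-out-of-three argument on a commuting diagram of complexes; the paper's diagram is yours transposed, with the split projections that forget the index $n+1$ as columns and the three resulting complexes as rows.

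The one step that does not survive as literally written is ``exactness of the rows (Lemma \ref{LEM: Binary MV exact sequence}) together with exactness of the columns (induction) forces the total complex, which is the augmented complex for $\textbf{M}$, to be exact.'' In the double complex you describe, only the degree-zero row is a binary Mayer--Vietoris sequence --- the higher rows are the bare restriction maps $\bigoplus\check{C}^q(M_{i_1\cdots i_p})\rightarrow\bigoplus\check{C}^q(M_{i_1\cdots i_p(n+1)})$, which are surjective but not short exact; the column obtained by splicing $\check{C}^q(M_{n+1})$ onto the augmented complex of $A$ is \emph{not} exact at the splice, its cohomology there being the group of cochains on $M_{n+1}$ that vanish on $A$, i.e.\ $\ker(\kappa^*)$; and $\check{C}^q(\textbf{M})$ is not a term of the total complex at all, so an identification is still owed. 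The paper's repair, which your closing paragraph gestures at, is to work with the short exact sequence of cochain complexes whose quotient is the augmented complex of $\textbf{N}$ and whose subcomplex is $0\rightarrow\ker(\kappa^*)\rightarrow\check{C}^q(M_{n+1})\rightarrow\bigoplus_{i}\check{C}^q(M_{i(n+1)})\rightarrow\cdots\rightarrow 0$, and to prove this subcomplex exact by splicing the directly verified sequence $0\rightarrow\ker(\kappa^*)\rightarrow\check{C}^q(M_{n+1})\rightarrow\check{C}^q(A)\rightarrow 0$ onto the augmented complex of $A$ supplied by the induction hypothesis; exactness of the middle row then follows from the long exact sequence of the short exact sequence of complexes. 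With that reorganisation, your outline is the paper's proof.
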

\begin{proof}
We will proceed by induction on the size of the set $I$. The case of $I=2$ is already proved as Lemma \ref{LEM: Binary MV exact sequence}. To spare notation, we will illustrate the inductive argument for $I=3$, though it should be understood that the full inductive case is near-identical. With all the data available to us, we may construct a contravariant analogue of the diagram in \cite[Pg. 187]{bott1982differential} as follows, where we have suppress the open covers for readability. 
\begin{center}
\adjustbox{scale=0.8}{
\begin{tikzcd}[row sep=4.8em, column sep = 1.4em]
            & 0 \arrow[d]                                                                 & 0 \arrow[d]                                                                                & 0 \arrow[d]                                                                                                    & 0 \arrow[d]                              &   \\
0 \arrow[r] & \ker{(\kappa^*)} \arrow[r, "\phi_3^*"] \arrow[d]                              & \check{C}^q(M_3) \arrow[r, "{(-f_{13}^*, -f_{23}^*)}"] \arrow[d]                        & \check{C}^q(M_{13})\oplus \check{C}^q(M_{23}) \arrow[r, "\tilde{\delta}"] \arrow[d] & \check{C}^q(M_{123}) \arrow[r] \arrow[d] & 0 \\
0 \arrow[r] & \check{C}^q(\textbf{M}, \mathcal{U}) \arrow[r, "\Phi^*"] \arrow[d, "\kappa^*"']            & \bigoplus_{i} \check{C}^q(M_i) \arrow[r, "\tilde{\delta}"] \arrow[d]                       & \bigoplus_{i<j} \check{C}^q(M_{ij}) \arrow[r, "\tilde{\delta}"] \arrow[d]                                      & \check{C}^q(M_{123}) \arrow[r] \arrow[d] & 0 \\
0 \arrow[r] & \check{C}^q(M_{1} \cup M_{2}) \arrow[r] \arrow[d] & \check{C}^q(M_{1})\oplus \check{C}^q(M_{2}) \arrow[r] \arrow[d] & \check{C}^q(M_{12}) \arrow[r] \arrow[d]                                                                        & 0 \arrow[r] \arrow[d]                    & 0 \\
            & 0                                                                           & 0                                                                                          & 0                                                                                                              & 0                                        &  
\end{tikzcd}

}
\end{center}
The map $\kappa^*$ is the pullback of the inclusion $\kappa: M_1 \cup_{f_{12}} M_2 \rightarrow \textbf{M}$ described in Section 1.1. The first row of the diagram is formed by taking kernels of the vertical maps, and the horizontal maps in the first row are defined so as to make the diagram commute. Since the columns of this diagram are all short exact sequences that split, in order to show the exactness of the center row it suffices to show that the first and third rows are exact. Observe first that the third row is a binary Mayer-Vietoris sequence, so is exact by Lemma \ref{LEM: Binary MV exact sequence}. 

Following the approach of \cite{crainic1999remark}, we may decompose the first row into sequences:

$$  0 \rightarrow \ker{(\kappa^*)} \overset{\phi_3^*}{\longrightarrow} \check{C}^q(M_3) \overset{-\iota^*}{\longrightarrow} \check{C}^q(M_{13} \cup M_{23}) \rightarrow 0        $$
and  
$$   0  \rightarrow \check{C}^q(M_{13} \cup M_{23}) \rightarrow \check{C}^q(M_{13})\oplus \check{C}^q(M_{23}) \rightarrow \check{C}^q(M_{123}) \rightarrow 0         $$
where here this decomposition is formed by taking the kernel of the $\tilde{\delta}$ map. The latter sequence is a Mayer-Vietoris sequence, so is exact by Lemma \ref{LEM: Binary MV exact sequence}. Thus our proof is complete once we argue that the former is a short exact sequence. 

Observe first that the map $\phi^*_3$ is injective since any two distinct elements of $\ker(\kappa^*)$ will be zero on $M_1 \cup M_2$, thus can only differ somewhere on $M_3$. The map $-\iota^*$ is surjective by a standard extension by zero argument (cf. Lemma \ref{LEM: Binary MV exact sequence}). Finally, we will show that $Im(\phi_3^*) = \ker(-\iota^*)$. The inclusion $Im(\phi_3^*) \subseteq \ker(-\iota^*)$ is guaranteed since any element in $\ker{(\kappa^*)}$ will also vanish on $M_{13} \cup M_{23}$. For the converse inclusion, let $\check{f}$ be some cochain on $M_3$ that restricts to zero on $M_{13}\cup M_{23}$. We can extend $\check{f}$ to a full cochain $\check{g}$ on $\textbf{M}$ by defining the functions of $\check{g}$ to be 
$$g_{\alpha_0 \cdots \alpha_q}([x,i]) = \begin{cases} f_{\alpha_0 \cdots \alpha_q}(x) & \text{if} \ x\in M_3 \\ 0 & \text{otherwise}\end{cases}.$$
The cochain $\check{g}$ will then satisfy $\phi_3^*\check{g} = \check{f}$, from which we may conclude that $\ker(-\iota^*)\subseteq Im(\phi_3^*) $, whence equality.
\end{proof}

The generalised Mayer-Vietoris long exact sequence of Theorem \ref{THM: generalised MV exact sequence} can be equivalently rephrased as follows.  

\begin{corollary}\label{COR: general Čech cochain fibre product}
The Čech cochains of $\textbf{M}$ satisfy the equality \begin{align*}
    \check{C}^q(\textbf{M}, \mathcal{U}) & = \prod_{\mathcal{F}} \check{C}^q(M_i, \mathcal{U}^i) \\
    & = \Big\{ \left(\check{f}^1, \cdots, \check{f}^n\right) \in \bigoplus_{i}\check{C}^q(M_i, \mathcal{U}^i) \ \Big| \ \iota_{ij}^*\check{f}^i = f_{ij}^* \check{f}^j \ \text{for all } i,j \in I  \Big\}.
\end{align*} 
\end{corollary}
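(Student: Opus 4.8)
The plan is to read the statement off directly from the exactness of the generalised Mayer--Vietoris sequence established in Theorem \ref{THM: generalised MV exact sequence}. The corollary is in fact nothing more than a reformulation of exactness at the first two terms of that sequence: exactness at $\check{C}^q(\textbf{M}, \mathcal{U})$ says that $\Phi^*$ is injective, while exactness at $\bigoplus_i \check{C}^q(M_i, \mathcal{U}^i)$ says that $\mathrm{Im}(\Phi^*) = \ker(\tilde{\delta})$, where $\tilde{\delta}$ is the first differential of the complex. So the entire content of the proof is to identify $\ker(\tilde{\delta})$ with the fibred product as written.

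First I would unwind the definition of the first map $\tilde{\delta}: \bigoplus_i \check{C}^q(M_i, \mathcal{U}^i) \to \bigoplus_{i<j} \check{C}^q(M_{ij}, \mathcal{U}^{ij})$. By its defining formula, the $(ij)$-component (for $i<j$) sends a tuple $(\check{f}^1, \dots, \check{f}^n)$ to $\iota_{ij}^* \check{f}^i - f_{ij}^* \check{f}^j$. Hence a tuple lies in $\ker(\tilde{\delta})$ precisely when $\iota_{ij}^* \check{f}^i = f_{ij}^* \check{f}^j$ for every pair $i<j$. Combining this with the injectivity of $\Phi^*$ then yields that $\Phi^*$ restricts to an isomorphism from $\check{C}^q(\textbf{M}, \mathcal{U})$ onto $\ker(\tilde{\delta})$, which is the desired fibred product.

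The only point requiring genuine care is reconciling the \emph{ordered} kernel condition (indexed by $i<j$, as dictated by the lexicographic convention built into $\tilde{\delta}$) with the \emph{symmetric} condition ``for all $i,j \in I$'' appearing in the statement. I would dispatch this using the adjunction axioms: the diagonal cases $i=j$ hold trivially, since $\iota_{ii}$ and $f_{ii}$ are identities by \textbf{(A1)} of Definition \ref{DEF: Adjunctive System}; and for $j<i$ the relation $\iota_{ji}^* \check{f}^j = f_{ji}^* \check{f}^i$ on $M_{ji}$ is obtained from the relation for the pair $(j,i)$ by precomposing with the diffeomorphism $f_{ij}: M_{ij} \to M_{ji}$ and invoking $f_{ij}^{-1} = f_{ji}$ from \textbf{(A2)}. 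Thus the symmetric condition adds no new constraints beyond those already imposed for $i<j$, and $\ker(\tilde{\delta})$ coincides exactly with $\prod_{\mathcal{F}} \check{C}^q(M_i, \mathcal{U}^i)$.

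I do not expect any serious obstacle here, since all the analytic work is already carried by Theorem \ref{THM: generalised MV exact sequence}; the corollary is essentially a translation of ``exactness at the first two spots'' into the language of fibred products, exactly parallel to how Corollary \ref{THM: Čech cochains are fibred product} was derived from Lemma \ref{LEM: Binary MV exact sequence} in the binary case. The mildly delicate step is the sign and index bookkeeping in the previous paragraph, and I would state it just carefully enough to confirm that the two formulations of the compatibility condition agree.
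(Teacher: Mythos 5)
Your proposal is correct and matches the paper's (implicit) argument exactly: the corollary is stated without proof precisely because it is the translation of exactness of the generalised Mayer--Vietoris sequence of Theorem \ref{THM: generalised MV exact sequence} at its first two terms, with $\ker(\tilde{\delta})$ identified with the fibred product, mirroring how Corollary \ref{THM: Čech cochains are fibred product} follows from Lemma \ref{LEM: Binary MV exact sequence}. Your extra check that the ordered condition for $i<j$ implies the symmetric condition via axioms \textbf{(A1)} and \textbf{(A2)} is a correct and worthwhile piece of bookkeeping that the paper leaves tacit.
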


In direct analogy to the construction of the Čech-de Rham bicomplex of \cite{bott1982differential}, we may arrange the cochain data of all the $M_i$ into a bicomplex as below. \vspace{-10mm}
\begin{center}
\begin{tikzcd}[row sep=3.7em, column sep = 1.8em]
                                     \vdots                                                                       & \vdots                                                                               & \vdots                                                               &        \\
                                     \bigoplus_i \check{C}^2(M_i, \mathcal{U}^i) \arrow[r, "\tilde{\delta}"] \arrow[u]           & \bigoplus_{i<j} \check{C}^2(M_{ij}, \mathcal{U}^{ij}) \arrow[r, "\tilde{\delta}"] \arrow[u]            & \bigoplus_{i<j<k} \check{C}^2(M_{ijk}, \mathcal{U}^{ijk}) \arrow[r] \arrow[u]           & \cdots \\
{}                                   \bigoplus_i \check{C}^1(M_i, \mathcal{U}^i) \arrow[u, "\delta"] \arrow[r, "\tilde{\delta}"] & \bigoplus_{i<j} \check{C}^1(M_{ij}, \mathcal{U}^{ij}) \arrow[u, "-\delta"] \arrow[r, "\tilde{\delta}"] & \bigoplus_{i<j<k} \check{C}^1(M_{ijk}, \mathcal{U}^{ijk}) \arrow[u, "\delta"] \arrow[r] & \cdots \\
                                     \bigoplus_i \check{C}^0(M_i, \mathcal{U}^i) \arrow[u, "\delta"] \arrow[r, "\tilde{\delta}"] & \bigoplus_{i<j} \check{C}^0(M_{ij}, \mathcal{U}^{ij}) \arrow[u, "-\delta"] \arrow[r, "\tilde{\delta}"] & \bigoplus_{i<j<k} \check{C}^0(M_{ijk}, \mathcal{U}^{ijk}) \arrow[u, "\delta"] \arrow[r] & \cdots 
                                     
\end{tikzcd}
\end{center}

Theorem \ref{THM: generalised MV exact sequence} ensures that the rows of this bicomplex are exact. As such, we may employ standard spectral arguments (found in, say \cite{bott1982differential}) to conclude that the cohomology of this bicomplex coincides with the Čech cohomology of $\textbf{M}$. We remark that it is possible to obtain some alternate descriptions of the Čech cohomology of $\textbf{M}$ by computing the spectral sequence of the above bicomplex starting with the $\delta$-cohomology of the columns. However, given the scope of this paper we will not expand on this observation. For our purposes, we will only use the following result. 
\begin{theorem}\label{THM: 1st Čech cohomology is fibred product for connected colimits}
    Suppose that $\textbf{M}$ is built from $n$-many $M_i$ in which all intersections $M_{i_1 \cdots i_p}$ are connected for all $p\leq n$. Then $\check{H}^1(\textbf{M})$ coincides with the fibred product $\prod_{\mathcal{F}} \check{H}^1(M_i)$.
\end{theorem}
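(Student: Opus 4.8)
The plan is to exploit the bicomplex set up immediately before the statement and to compute its total cohomology by running the \emph{other} spectral sequence, namely the one obtained by taking cohomology with respect to the vertical Čech differential $\delta$ first. Theorem~\ref{THM: generalised MV exact sequence} guarantees that the rows are exact, so the row-first spectral sequence collapses onto $\check{C}^\bullet(\textbf{M}, \mathcal{U})$ and identifies the total cohomology of the bicomplex with $\check{H}^\bullet(\textbf{M})$; I will take this identification (already asserted in the preceding paragraph) as given and work with the column-first sequence to isolate $\check{H}^1$.

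Taking vertical ($\delta$) cohomology first produces an $E_1$ page with entries
\[
E_1^{p,q} = \bigoplus_{i_0 < \cdots < i_p} \check{H}^q(M_{i_0 \cdots i_p}),
\]
whose horizontal differential is the map induced by $\tilde{\delta}$. The connectedness hypothesis enters decisively on the bottom row $q=0$: each intersection $M_{i_0 \cdots i_p}$ is connected, hence $\check{H}^0(M_{i_0 \cdots i_p}) = G$, and on these rank-one groups the pullbacks $\iota^*$ and $f^*$ each send the constant generator to itself, i.e. act as the identity on $G$. Consequently $E_1^{\bullet, 0}$ is precisely the simplicial cochain complex (with coefficients in $G$) of the nerve of the cover $\{M_i\}$, with $\tilde{\delta}$ the simplicial coboundary. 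Because every intersection is nonempty, this nerve is the full simplex $\Delta^{n-1}$, which is contractible, so $E_2^{p,0} = H^p(\Delta^{n-1}; G) = 0$ for all $p \geq 1$.

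With this in hand I read off the degree-one data. The corner $E_2^{0,1}$ is the kernel of $\tilde{\delta}\colon \bigoplus_i \check{H}^1(M_i) \to \bigoplus_{i<j} \check{H}^1(M_{ij})$, and since $\tilde{\delta}$ assembles the difference maps $\iota_{ij}^* - f_{ij}^*$, this kernel is by definition the fibred product $\prod_{\mathcal{F}} \check{H}^1(M_i)$. The total degree-one cohomology is filtered with associated graded pieces $E_\infty^{1,0}$ and $E_\infty^{0,1}$; the previous paragraph gives $E_\infty^{1,0} = E_2^{1,0} = 0$, while the only differential that could disturb the corner, $d_2\colon E_2^{0,1} \to E_2^{2,0}$, has vanishing target. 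Hence $E_\infty^{0,1} = E_2^{0,1}$ and the extension degenerates to $\check{H}^1(\textbf{M}) \cong \prod_{\mathcal{F}} \check{H}^1(M_i)$, as claimed.

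The main obstacle is conceptual rather than computational: everything rests on the vanishing of $E_2^{p,0}$ for $p \geq 1$, and this is exactly where \emph{connected} must be read as \emph{nonempty and connected}. If a higher intersection were empty, the nerve would be a proper subcomplex of $\Delta^{n-1}$ with possibly nontrivial $H^1$, yielding a surviving $E_\infty^{1,0}$ and hence an extra summand beyond the fibred product; note that the binary Lemma~\ref{LEM: Čech cohomology binary fibre product} already relied on $M_{12}$ being nonempty to force its connecting map to vanish. I would therefore make the nonemptiness of all intersections explicit, and I would also verify carefully that on $\check{H}^0$ the maps $\iota^*$ and $f^*$ genuinely reduce to the identity, so that $E_1^{\bullet,0}$ is indeed the simplicial cochain complex of the nerve.
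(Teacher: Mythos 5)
Your argument is correct, but it takes a genuinely different route from the paper. You run the column-first spectral sequence of the bicomplex displayed before the theorem: connectedness (read as nonempty and connected) forces the bottom row $E_1^{\bullet,0}$ to be the simplicial cochain complex of the nerve of $\{M_i\}$, which is a full simplex, so $E_2^{p,0}=0$ for $p\geq 1$; the corner $E_2^{0,1}=\ker\tilde{\delta}$ is the fibred product, no differential can touch it, and the filtration on total degree one degenerates. The paper instead argues by induction on $n$: it rewrites $\textbf{M}$ as the two-step adjunction $(M_1\cup M_2)\cup M_3$ via Lemma \ref{LEM: adj of 3 manifolds is two-step}, applies the binary Mayer--Vietoris sequence of Lemma \ref{LEM: Binary MV exact sequence} at each stage (using connectedness only to kill the connecting map out of $\check{H}^0$, as in Lemma \ref{LEM: Čech cohomology binary fibre product}), and then invokes Theorem \ref{THM: Fibred product is Inductive Product} to collapse the iterated binary fibred products into $\prod_{\mathcal{F}}\check{H}^1(M_i)$. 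Your version is more uniform --- it treats all $n$ at once, makes visible exactly what fails when an intersection is empty or disconnected (a surviving $E_\infty^{1,0}$ or a nonzero $d_2$ into $E_2^{2,0}$), and generalises naturally to higher degrees --- at the cost of the heavier spectral-sequence machinery; the paper's route is more elementary, needing only the snake lemma and the binary case, but leans on the appendix result about inductive fibred products and on checking that intermediate unions such as $M_{13}\cup M_{23}$ remain connected. One point you share with the paper and should still make explicit: the $E_1$ entries are cover-dependent groups $\check{H}^q(M_{i_0\cdots i_p},\mathcal{U}^{i_0\cdots i_p})$, so a final passage to the direct limit over refinements (as in the paper's appendix) is needed before the statement about $\check{H}^1(M_i)$ proper is obtained; this is harmless since direct limits are exact and any open cover of a connected space has $\check{H}^0=G$, but it deserves a sentence.
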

\begin{proof}
We will proceed by induction on the size of the indexing set $I$. This argument revolves around an inductive form of the finite fibred product, so the reader unfamiliar with this form is invited to read the appendix first. 

The binary case is already proved in the form of Lemma \ref{LEM: Binary MV exact sequence}. We will illustrate the case for $I=3$, though it should be understood that the inductive argument is near-identical to the following. Let $\textbf{M}$ be the colimit of the diagram formed from $M_1$, $M_2$ and $M_3$. We can use the inductive construction of $\textbf{M}$ as in Lemma \ref{LEM: adj of 3 manifolds is two-step} to view $\textbf{M}$ as an adjunction of the pair $M_1\cup M_2$ to $M_3$ along the union $M_{13} \cup M_{23}$. By the Mayer-Vietoris argument of Lemma \ref{LEM: Binary MV exact sequence} we have the following portion of the long exact sequence: 
\begin{center}
\adjustbox{scale=0.9}{\begin{tikzcd}[arrow style=math font,cells={nodes={text height=2ex,text depth=0.75ex}}]
       0 \arrow[r] & \check{H}^0 (\textbf{M}) \arrow[r] & \check{H}^0(M_{1} \cup M_2) \oplus  \check{H}^0(M_3) \arrow[r] \arrow[draw=none]{d}[name=Y, shape=coordinate]{} & \check{H}^0(M_{13} \cup M_{23}) \arrow[curarrow=Y]{dll}{} & \\
       & \check{H}^1 (\textbf{M}) \arrow[r] & \check{H}^1(M_{1} \cup M_2) \oplus  \check{H}^1(M_3) \arrow[r, "\iota^*_{a} - \iota^*_{b}"]  & \check{H}^1(M_{13} \cup M_{23}) \arrow[r] & \cdots 
\end{tikzcd}}
\end{center}
where we have used that $(M_1 \cup M_2)\cap M_3 = M_{13} \cup M_{23}$, and $\iota_a$ and $\iota_b$ are the inclusions of $M_{13} \cup M_{23}$ into $M_1 \cup M_2$ and $M_3$, respectively. Observe that again we have passed into the cover-independent Čech cohomology by using the direct limiting process found in the Appendix. By assumption all of the $\check{H}^0$ terms in this sequence equal $G$, so we may apply the same reasoning as that of Lemma \ref{LEM: Čech cohomology binary fibre product} to conclude that the connecting homomorphism $\delta^*$ is the zero map. Thus $\check{H}^1(\textbf{M}) = \ker(\iota^*_{a} - \iota^*_{b})$, which can be equivalently stated as: 
\begin{align*}
    \check{H}^1(\textbf{M}) & = \big\{ ([\check{f}], [\check{g}]) \in \check{H}^1(M_{1} \cup M_2) \oplus  \check{H}^1(M_3) \ \big{|} \  \iota^*_{a}[\check{f}] = \iota^*_{b}[\check{g}] \big\} \\
    & = \check{H}^1(M_{1} \cup M_2) \times_{\check{H}^1(M_{13} \cup M_{23})}  \check{H}^1(M_3).
\end{align*}
By assumption the triple intersection $M_{123}$ is also connected, so we apply the same reasoning to the union $M_{13} \cup M_{23}$ to conclude that 
$$ \check{H}^1(M_{13} \cup M_{23}) = \check{H}^1(M_{13})\times_{H^1(M_{123})} \check{H}^1(M_{23}).     $$
Putting this all together, we have that
\begin{align*}
    \check{H}^1(\textbf{M}) & = \check{H}^1(M_{1} \cup M_2) \times_{\check{H}^1(M_{13} \cup M_{23})}  \check{H}^1(M_3) \\
    & = \check{H}^1(M_{1} \cup M_2) \times_{\check{H}^1(M_{13})\times_{H^1(M_{123})} \check{H}^1(M_{23})}  \check{H}^1(M_3) \\ 
    & \cong \prod_{\mathcal{F}} \check{H}^1(M_i)
\end{align*}
where the final isomorphism follows from Theorem \ref{THM: Fibred product is Inductive Product}.
\end{proof}

\subsection{Classifying Line Bundles}
We will now discuss the prospect of classifying real line bundles over a fixed non-Hausdorff manifold $\textbf{M}$. Continuing on from the previous section, we will assume that $\textbf{M}$ may be expressed as the colimit of $n$-many Hausdorff manifolds $M_i$ according to the requirements of Remark \ref{REM: assumptions in this paper}. \\
We can describe any rank-$k$ vector bundle $\textbf{E}$ over $\textbf{M}$ as a choice of transition functions $g_{\alpha\beta}:U_{\alpha\beta} \rightarrow GL(k)$ satisfying the conditions $g_{\alpha\alpha} = id$ and $g_{\alpha \beta}  g_{\beta \gamma}  g_{\gamma \alpha} = id$. When this is the case, we say that the open cover $\mathcal{U} = \{ U_\alpha \}$ \textit{trivialises} the bundle $\textbf{E}$.  
As in the Hausdorff case, any two sets $\{g_{\alpha\beta} \}$ and $\{h_{\alpha \beta} \}$ of trivialisations will describe the same bundle $\textbf{E}$ whenever there exists a collection of maps $\{k_\alpha: U_\alpha \rightarrow GL(k) \}$ satisfying $h_{\alpha\beta} = k_\beta^{-1} g_{\alpha\beta} k_\alpha$. \\


In the case that $\textbf{E}$ is a line bundle, we may use Theorem \ref{THM: E admits a bundle metric} to conclude that $\textbf{E}$ admits a bundle metric. This allows us to reduce the structure group of $\textbf{E}$ from $GL(1)$ down to $O(1)$. As such, any transition function will now take image in the Abelian group $\mathbb{Z}_2$. We may then equivalently view the construction of $\textbf{E}$ from local trivialisations as a cocycle in $\check{C}^1(\textbf{M}, \mathcal{U}, \mathbb{Z}_2)$ whose cohomology class determines $\textbf{E}$ up to isomorphism. Explicitly, for any open cover $\mathcal{U}$ of $\textbf{M}$ there is a bijection between the set $\text{Line}_{\mathbb{R}}(\textbf{M}, \mathcal{U})$ of real line bundles trivialised by $\mathcal{U}$ and the $\mathcal{U}$-dependent Čech cohomology $\check{H}^1(\textbf{M}, \mathcal{U}, \mathbb{Z}_2)$. We will now use the direct limit construction of $\check{H}^1(\textbf{M}, \mathbb{Z}_2)$ to prove the following.

\begin{theorem}\label{THM: line bundles for NH manifold}
 There is a bijection between the set $\text{Line}_{\mathbb{R}}(\textbf{M})$ of inequivalent real line bundles over $\textbf{M}$ and the first Čech cohomology group $\check{H}^1(\textbf{M}, \mathbb{Z}_2)$. 
\end{theorem}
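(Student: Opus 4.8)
The plan is to upgrade the cover-dependent bijection $\text{Line}_{\mathbb{R}}(\textbf{M}, \mathcal{U}) \cong \check{H}^1(\textbf{M}, \mathcal{U}, \mathbb{Z}_2)$ to a cover-independent statement by passing to the direct limit over the system of open covers of $\textbf{M}$ ordered by refinement. Since $\check{H}^1(\textbf{M}, \mathbb{Z}_2)$ is by definition the direct limit $\varinjlim_{\mathcal{U}} \check{H}^1(\textbf{M}, \mathcal{U}, \mathbb{Z}_2)$, it suffices to organise the bundle side into a directed system indexed by the same covers, to show that the cover-dependent bijections are compatible with the refinement maps, and then to identify the colimit of the bundle side with the full set $\text{Line}_{\mathbb{R}}(\textbf{M})$.

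First I would write $\Lambda_{\mathcal{U}}$ for the bijection sending a line bundle trivialised by $\mathcal{U}$ to the cohomology class of its $\mathbb{Z}_2$-valued transition cocycle, whose existence relies on the reduction of the structure group to $O(1) \cong \mathbb{Z}_2$ afforded by the metric of Theorem \ref{THM: E admits a bundle metric}. Given a refinement $\mathcal{V}$ of $\mathcal{U}$, a bundle trivialised by $\mathcal{U}$ is automatically trivialised by $\mathcal{V}$ (restrict the local frames), which furnishes a transition map on the bundle side; on the cohomology side one has the usual refinement homomorphism $\check{H}^1(\textbf{M}, \mathcal{U}, \mathbb{Z}_2) \to \check{H}^1(\textbf{M}, \mathcal{V}, \mathbb{Z}_2)$. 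The key point is that $\Lambda_{\mathcal{U}}$ and $\Lambda_{\mathcal{V}}$ intertwine these two maps: restricting a trivialising cocycle to a finer cover is precisely the cochain-level refinement, so the square commutes after passing to cohomology. This turns $\{\Lambda_{\mathcal{U}}\}$ into a morphism of directed systems.

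Next I would take colimits. The transition maps on the bundle side are injective, since they merely record that an isomorphism class trivialised by $\mathcal{U}$ remains the same class when viewed over $\mathcal{V}$; hence $\varinjlim_{\mathcal{U}} \text{Line}_{\mathbb{R}}(\textbf{M}, \mathcal{U})$ is the union of the sets $\text{Line}_{\mathbb{R}}(\textbf{M}, \mathcal{U})$ with each isomorphism class counted exactly once. Because every line bundle over $\textbf{M}$ is locally trivial, either by definition or through the colimit description of Theorem \ref{THM: colimit of bundles}, every isomorphism class admits \emph{some} trivialising cover, so this colimit is canonically $\text{Line}_{\mathbb{R}}(\textbf{M})$. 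Applying the universal property of colimits to the compatible family $\{\Lambda_{\mathcal{U}}\}$ then yields a well-defined bijection $\text{Line}_{\mathbb{R}}(\textbf{M}) \to \check{H}^1(\textbf{M}, \mathbb{Z}_2)$, sending $[\textbf{E}]$ to the class of its transition cocycle computed in any trivialising cover.

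The main obstacle I anticipate lies in the naturality check of the second step: the refinement homomorphism on Čech cohomology is independent of the chosen refinement function only \emph{after} passing to cohomology, with different choices related by a cochain homotopy, so some care is required to confirm that the induced map on classes is canonical and that $\Lambda$ genuinely commutes with it. The remaining verifications, namely injectivity of the bundle transition maps and surjectivity onto all isomorphism classes via local triviality, are routine. I would emphasise that nothing in this argument invokes the Hausdorff property of the $M_i$ directly: the non-Hausdorff features of $\textbf{M}$ enter only through the existence of the bundle metric and the cover-dependent classification, both of which have already been secured.
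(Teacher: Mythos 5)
Your proposal is correct and takes essentially the same approach as the paper: both pass from the cover-dependent bijections $\text{Line}_{\mathbb{R}}(\textbf{M},\mathcal{U})\cong\check{H}^1(\textbf{M},\mathcal{U},\mathbb{Z}_2)$ to the direct limit over refinements, checking compatibility with the refinement maps and using local triviality to see that every isomorphism class appears. If anything, your explicit treatment of injectivity of the transition maps on the bundle side and of surjectivity via local triviality is slightly more careful than the paper's closing remark that $\alpha$ is bijective because each $\alpha_{\mathcal{U}}$ is.
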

\begin{proof}
    For each open cover $\mathcal{U}$ of $\textbf{M}$, denote by $\alpha_{\mathcal{U}}$ the bijection between $\text{Line}_{\mathbb{R}}(\textbf{M},\mathcal{U})$ and $\check{H}^1(\textbf{M},\mathcal{U})$. By construction, each group $\check{H}^1(\textbf{M},\mathcal{U}, \mathbb{Z}_2)$ maps into $\check{H}^1(\textbf{M}, \mathbb{Z}_2)$ via some map $\psi_{\mathcal{U}}$, and this map commutes with any refinement maps. We define a map $\alpha: \text{Line}_{\mathbb{R}}(\textbf{M}, \mathbb{Z}_2)\rightarrow \check{H}^1(\textbf{M}, \mathbb{Z}_2)$ by sending each $\textbf{L}$ to $\psi_{\mathcal{U}}\circ \alpha_{\mathcal{U}}(\textbf{L})$, where $\mathcal{U}$ is some open cover that trivialises $\textbf{L}$. To see that the map $\alpha$ is well-defined, suppose that $\textbf{L}$ is some line bundle that is simultaneously trivialised by two open covers $\mathcal{U}$ and $\mathcal{V}$. We may then select some open cover $\mathcal{W}$ that refines both $\mathcal{U}$ and $\mathcal{V}$ simultaneously. Consider two maps $\lambda_{\mathcal{U}}$ and $\lambda_{\mathcal{V}}$ that encode the refinements into $\mathcal{W}$. By the direct limit construction of $\check{H}^1(\textbf{M}, \mathbb{Z}_2)$, we then have that 
    $$ \psi_{\mathcal{U}} \circ \alpha_{\mathcal{U}}(\textbf{L}) = \psi_{\mathcal{W}} \circ \lambda_{\mathcal{U}} \circ \alpha_{\mathcal{U}} (\textbf{L})= \psi_{\mathcal{W}}\circ \alpha_{\mathcal{W}}(\textbf{L}) = \psi_{\mathcal{W}} \circ \lambda_{\mathcal{V}} \circ \alpha_{\mathcal{V}} (\textbf{L}) = \psi_{\mathcal{V}} \circ \alpha_{\mathcal{V}}(\textbf{L}),     $$
    as required. Finally, we observe that $\alpha$ is bijective since every $\alpha_{\mathcal{U}}$ is.
\end{proof}
Given the results of Sections 3.1 and 3.2, it would be useful if there were formulas to express the order of $\check{H}^1(\textbf{M}, \mathbb{Z}_2)$ in terms of the orders of the groups $\check{H}^1(M_i, \mathbb{Z}_2)$. Unfortunately, a general formula may not exist. However, we may use Theorem \ref{THM: 1st Čech cohomology is fibred product for connected colimits} together with a basic property of fibred products to yield the following.
\begin{theorem}\label{THM: main theorem for line bundles}
Let $\textbf{M}$ be a non-Hausdorff manifold formed from Hausdorff manifolds $M_i$, according to \ref{THM: Summary of topological properties}. Suppose furthermore that: 
\begin{enumerate}
    \item the subspaces $M_{{i_1}\cdots {i_p}}$ are all connected, for all $p \leq n$, and 
    \item each of the descended difference maps $$\tilde{\delta}: \bigoplus_{i_1 < \cdots < i_p} \check{H}^1(M_{i_1 \cdots i_p}, \mathbb{Z}_2) \rightarrow 
\bigoplus_{i_1 < \cdots < i_{p+1}} \check{H}^1(M_{i_1 \cdots i_{p+1}}, \mathbb{Z}_2)$$ is surjective. 
\end{enumerate}
Then the number of inequivalent line bundles on $\textbf{M}$ can be expressed with the formula:
$$ |\check{H}^1(\textbf{M}, \mathbb{Z}_2)| =  \sum_{k \leq n} \sum_{i_1,\cdots i_p \in I} (-1)^{(p+1)} |\check{H}^1( M_{{i_1} \cdots {i_p}}, \mathbb{Z}_2)|.             $$
\end{theorem}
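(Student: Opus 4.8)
The plan is to convert the count of line bundles into a single dimension computation and then read off the alternating sum as an Euler characteristic. By Theorem \ref{THM: line bundles for NH manifold} the number of inequivalent real line bundles over $\textbf{M}$ is exactly $|\check{H}^1(\textbf{M}, \mathbb{Z}_2)|$, and because this group is a vector space over the field $\mathbb{F}_2 = \mathbb{Z}_2$ its cardinality is controlled by its dimension through $|\check{H}^1(\textbf{M}, \mathbb{Z}_2)| = 2^{\dim \check{H}^1(\textbf{M}, \mathbb{Z}_2)}$. Condition $1$ places us in the setting of Theorem \ref{THM: 1st Čech cohomology is fibred product for connected colimits}, so I may identify $\check{H}^1(\textbf{M})$ with the fibred product $\prod_{\mathcal{F}} \check{H}^1(M_i)$; unwinding the definition of the descended difference maps, this fibred product is precisely the kernel of $\tilde{\delta} \colon \bigoplus_i \check{H}^1(M_i) \to \bigoplus_{i<j} \check{H}^1(M_{ij})$. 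The task is therefore reduced to computing $\dim \ker \tilde{\delta}$ in terms of the pieces $\check{H}^1(M_{i_1 \cdots i_p})$.

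The key step is to upgrade this single kernel into a statement about the whole descended complex
$$ 0 \to \check{H}^1(\textbf{M}) \xrightarrow{\ \Phi^*\ } \bigoplus_{i} \check{H}^1(M_i) \xrightarrow{\ \tilde{\delta}\ } \bigoplus_{i<j} \check{H}^1(M_{ij}) \xrightarrow{\ \tilde{\delta}\ } \cdots \xrightarrow{\ \tilde{\delta}\ } \check{H}^1(M_{1\cdots n}) \to 0. $$
Since $\tilde{\delta}$ squares to zero this is a cochain complex, and the fibred-product description already furnishes exactness at $\check{H}^1(\textbf{M})$ and at $\bigoplus_i \check{H}^1(M_i)$. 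I would then invoke condition $2$ to secure exactness at every remaining term, so that the entire cohomology of the descended complex is concentrated in the initial slot and equals $\check{H}^1(\textbf{M})$. Once this is in hand, the additivity of dimension over a bounded exact complex of finite-dimensional $\mathbb{F}_2$-vector spaces yields
$$ \dim \check{H}^1(\textbf{M}, \mathbb{Z}_2) = \sum_{p \leq n} \sum_{i_1 < \cdots < i_p} (-1)^{p+1} \dim \check{H}^1(M_{i_1 \cdots i_p}, \mathbb{Z}_2), $$
and exponentiating base $2$ delivers the stated formula for the number of line bundles.

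An appealing alternative, closer to the remark preceding the statement, is to run the computation inductively through the fibred product itself. Presenting $\textbf{M}$ as the binary adjunction of $\bigcup_{i \leq n-1} M_i$ with $M_n$ as in Theorem \ref{THM: inductive colimit} and applying Lemma \ref{LEM: Čech cohomology binary fibre product}, I would write $\check{H}^1(\textbf{M})$ as a two-term fibred product and use the elementary fact that $\dim(A \times_C B) = \dim A + \dim B - \dim C$ whenever the defining difference map is surjective. Iterating this — exactly as in the $I=3$ computation inside Theorem \ref{THM: 1st Čech cohomology is fibred product for connected colimits}, and closing up with Theorem \ref{THM: Fibred product is Inductive Product} — telescopes the nested two-term inclusion–exclusions into a single alternating sum ranging over all index tuples, reproducing the same formula.

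The main obstacle is this middle step: establishing exactness of the descended complex in positive degrees, or equivalently verifying that each nested fibred product really is formed over a surjection so that dimensions subtract without correction. This is precisely where both hypotheses are consumed. Connectedness of all the $M_{i_1 \cdots i_p}$ forces every $\check{H}^0$ term to be a single copy of $\mathbb{Z}_2$ (as exploited in Lemma \ref{LEM: Čech cohomology binary fibre product}), which kills the connecting homomorphisms and prevents lower-degree cohomology from leaking extra terms into the count; the surjectivity of the $\tilde{\delta}$ maps then removes any residual higher classes that would otherwise obstruct the telescoping. The remaining work — tracking signs and discarding those tuples whose intersection $M_{i_1 \cdots i_p}$ is empty, since these contribute trivially — is routine but should be carried out with care.
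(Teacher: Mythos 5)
Your overall strategy --- identify $\check{H}^1(\textbf{M},\mathbb{Z}_2)$ with the fibred product via Theorem \ref{THM: 1st Čech cohomology is fibred product for connected colimits} and then count dimensions over $\mathbb{F}_2$ --- is exactly what the paper intends: the paper prints no proof at all, only the preceding remark that the result follows from that theorem ``together with a basic property of fibred products,'' and your iterated two-term computation is a faithful expansion of that remark. The genuine gap is in your final step. What your Euler-characteristic argument actually yields is the identity $\dim \check{H}^1(\textbf{M},\mathbb{Z}_2) = \sum_{p}\sum_{i_1<\cdots<i_p} (-1)^{p+1}\dim\check{H}^1(M_{i_1\cdots i_p},\mathbb{Z}_2)$, and exponentiating base $2$ converts this into the \emph{multiplicative} relation $|\check{H}^1(\textbf{M},\mathbb{Z}_2)| = \prod_{p}\prod_{i_1<\cdots<i_p} |\check{H}^1(M_{i_1\cdots i_p},\mathbb{Z}_2)|^{(-1)^{p+1}}$. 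That is not the displayed formula, which is an alternating \emph{sum} of cardinalities; the two already disagree in the binary case with $\dim\check{H}^1(M_1)=\dim\check{H}^1(M_2)=2$ and $\dim\check{H}^1(M_{12})=1$, where your argument gives $|\check{H}^1(\textbf{M})|=2^{2+2-1}=8$ while the stated formula gives $4+4-2=6$. So ``exponentiating base $2$ delivers the stated formula'' is false as written: you should either prove the multiplicative (equivalently, dimension-level) version, or note explicitly that the theorem's $|\cdot|$ must be read as rank rather than cardinality for the alternating sum to be the right statement.

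A secondary point to tighten: exactness of the descended complex at the terms $\bigoplus_{i_1<\cdots<i_p}\check{H}^1(M_{i_1\cdots i_p})$ for $p\geq 2$ does not follow from surjectivity of the maps $\tilde{\delta}$ alone, since surjectivity controls images and not kernels; indeed, in a complex in which every map is surjective one forces all later terms to vanish (if $\tilde{\delta}_p$ is onto and $\tilde{\delta}_{p+1}\circ\tilde{\delta}_p=0$, then $\tilde{\delta}_{p+1}=0$, so its surjectivity kills the next term). Hypothesis 2 therefore either trivialises the higher terms or must be translated into the surjectivity statements you actually consume at each stage of the telescoping, namely surjectivity of the binary difference maps onto $\check{H}^1$ of the relevant unions such as $M_{13}\cup M_{23}$. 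Your iterated fibred-product route in the third paragraph is the safer one, but you should verify, rather than assert, that hypothesis 2 supplies exactly those surjections.
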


\section{Conclusion}
In this paper we have explored the prospect of fibering vector spaces over a base space which has the structure of a non-Hausdorff manifold. Using the topological criteria outlined in both Section 1.1 and \cite{oconnell2023non}, we saw that bundle structures can be naturally defined, reconstructed, and in some cases classified. Our initial observation was that of Theorem \ref{LEM: non-Haus smooth manifold}, which confirmed that smooth non-Hausdorff manifolds can be constructed by gluing together ordinary smooth manifolds along diffeomorphic open submanifolds. This, once combined with the contravariance of the $C^\infty$ functor, then allowed us to express the algebra of smooth real-valued functions of a non-Hausdorff manifolds as a limit in the abelian category of unital associative algebras. \\

Throughout Section 2, we saw that vector bundles fibred over our non-Hausdorff manifold $\textbf{M}$ can be constructed by taking a particular colimit of Hausdorff bundles. This was proved as Theorem \ref{THM: colimit of bundles}, and it was later shown in Theorem \ref{THM: reconstruction theorem for colimit bundles} that every vector bundle over $\textbf{M}$ can be described in this manner. Using this observation, we then showed that all sections of all bundles over $\textbf{M}$ carry a description in terms of sections on each Hausdorff submanifold. In Theorem \ref{THM: E admits a bundle metric} this piecewise construction of sections was used to conclude that bundle metrics will exist for any non-Hausdorff bundle $\textbf{E}$ fibred over $\textbf{M}$. In particular, Corollary \ref{COR: M admits Riemannian metric} confirmed that Riemannian metrics always exist on any non-Hausdorff manifold $\textbf{M}$ satisfying our particular topological criteria. \\

In Section 3 we studied the Čech cohomology of non-Hausdorff manifolds. In Sections 3.1 and 3.2 we related the Čech cohomology groups $\check{H}^q(\textbf{M})$ to the groups $\check{H}^q(M_{i_1 \cdots i_p})$ in the form of (generalised) Mayer-Vietoris sequences. These sequences will be typically be non-trivial, and therefore the fibre-product structure seen in \ref{THM: fibred product of smooth functions}, \ref{THM: fibred product of sections} and \ref{COR: general Čech cochain fibre product} will not present itself in Čech cohomology. Despite that, we saw in the form of Theorem \ref{THM: 1st Čech cohomology is fibred product for connected colimits} that the first Čech cohomology group $\check{H}^1(\textbf{M})$ \textit{will} be a fibred product, provided that the Hausdorff submanifolds $M_i$ and all their intersections are connected. Finally, in Theorem \ref{THM: main theorem for line bundles} we identified some conditions under which the inequivalent real lines bundles over $\textbf{M}$ can be expressed as an alternating sum formula. \\

We will finish this paper with some speculations regarding future developments. Of particular interest is the relationship between the Čech cohomology groups established here, and other theories such as de Rham cohomology. As an application of Theorem \ref{THM: fibred product of sections}, the differential forms on $\textbf{M}$ satisfy a fibred product structure, and therefore it will be interesting to compute the de Rham cohomology groups (à la \cite{crainic1999remark}) and relate them to the groups $\check{H}^q(\textbf{M}, \mathbb{R})$. These relationships might then be used in conjunction with a sound theory of affine connections to establish a Chern-Weil Theory for non-Hausdorff manifolds.

\subsection*{Acknowledgements}
This paper was made possible by the funding I have received from the Okinawa Institute of Science and Technology. In no particular order, I'd like to thank Yasha Neiman, Slava Lysov, Tim Henke, Andrew Lobb, Chris Chung and Martín Forsberg Conde for the various discussions. 

\printbibliography

\appendix
\section{Appendix}

\begin{lemma}\label{LEM:Appendix inductive colimit}
Let $\textbf{M}$ be a non-Hausdorff manifold built from three manifolds $M_i$. Then $\textbf{M}$ is homeomorphic to the two-step adjunction space $$ (M_1 \cup_{f_{12}} M_2)\cup_{f_{13} \cup f_{23}} M_3.$$ 
\end{lemma}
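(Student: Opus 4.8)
The plan is to establish the homeomorphism abstractly, invoking the universal property of Lemma \ref{LEM: universal property of adjunction space} in both directions rather than comparing the two quotient topologies point by point. Write $\textbf{Q} := M_1 \cup_{f_{12}} M_2$ for the binary colimit, with canonical embeddings $\psi_1, \psi_2$, and let $A := M_{31} \cup M_{32} \subseteq M_3$ be the region of $M_3$ that is glued on. First I would pin down the gluing map $g := f_{13} \cup f_{23}: A \to \textbf{Q}$ of the statement, given concretely by $g(x) = \llbracket f_{3i}(x), i \rrbracket$ for $x \in M_{3i}$; its value on the overlap $M_{31} \cap M_{32}$ is independent of the choice of $i$ precisely because condition \textbf{(A3)} supplies $f_{32} = f_{12} \circ f_{31}$ there, so $g$ is well defined (and it is an open embedding, by the openness of the $f_{3i}$ and the $\psi_i$). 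This presents $\textbf{P} := \textbf{Q} \cup_g M_3$ as a genuine binary adjunction space, with embeddings $\theta_{\textbf{Q}}: \textbf{Q} \to \textbf{P}$, $\theta_3: M_3 \to \textbf{P}$ obeying the gluing relation $\theta_{\textbf{Q}} \circ g = \theta_3|_A$.

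Next I would manufacture a continuous map $\alpha: \textbf{P} \to \textbf{M}$. Since $\phi_1 = \phi_2 \circ f_{12}$ on $M_{12}$, the universal property applied to $\textbf{Q}$ yields a unique $\beta: \textbf{Q} \to \textbf{M}$ with $\beta \circ \psi_i = \phi_i$, and one checks $\beta \circ g = \phi_3|_A$: for $x \in M_{3i}$ both sides equal $[x,3]$, using the defining identification $[f_{3i}(x), i] = [x,3]$ of $\textbf{M}$. The compatible pair $(\beta, \phi_3)$ then feeds into the universal property of the pushout $\textbf{P}$ to give $\alpha$ with $\alpha \circ \theta_{\textbf{Q}} = \beta$ and $\alpha \circ \theta_3 = \phi_3$. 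For the reverse map I would set $\eta_1 := \theta_{\textbf{Q}} \circ \psi_1$, $\eta_2 := \theta_{\textbf{Q}} \circ \psi_2$, $\eta_3 := \theta_3$ and verify $\eta_i = \eta_j \circ f_{ij}$ for all $i,j$ (it suffices to treat the pairs $(1,2)$, $(1,3)$, $(2,3)$, the rest following from \textbf{(A1)} and \textbf{(A2)}); the pair $(1,2)$ is immediate from $\psi_1 = \psi_2 \circ f_{12}$, while $(1,3)$ and $(2,3)$ collapse, via $f_{3i} \circ f_{i3} = \mathrm{id}$, to the gluing relation $\theta_3 = \theta_{\textbf{Q}} \circ g$ defining $\textbf{P}$. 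The three-fold universal property of $\textbf{M}$ then delivers a unique continuous $\gamma: \textbf{M} \to \textbf{P}$ with $\gamma \circ \phi_i = \eta_i$.

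Finally I would conclude that $\alpha$ and $\gamma$ are mutually inverse, using only the uniqueness clauses. Precomposing $\alpha \circ \gamma$ with each $\phi_i$ returns $\phi_i$, which $\mathrm{id}_{\textbf{M}}$ also does, so uniqueness over $\textbf{M}$ forces $\alpha \circ \gamma = \mathrm{id}_{\textbf{M}}$. For $\gamma \circ \alpha$ I would first observe $\gamma \circ \beta = \theta_{\textbf{Q}}$ (both agree after precomposition with $\psi_1, \psi_2$, so uniqueness over $\textbf{Q}$ applies), whence $\gamma \circ \alpha$ agrees with $\mathrm{id}_{\textbf{P}}$ after precomposition with $\theta_{\textbf{Q}}$ and $\theta_3$, and uniqueness over $\textbf{P}$ gives $\gamma \circ \alpha = \mathrm{id}_{\textbf{P}}$. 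Having a continuous two-sided inverse, $\alpha$ is a homeomorphism. I expect the only genuine difficulty to be bookkeeping: keeping the double-bracket classes of $\textbf{Q}$ and $\textbf{P}$ distinct from the single-bracket classes of $\textbf{M}$, and confirming that every compatibility identity is an instance of \textbf{(A2)} or \textbf{(A3)}. The categorical skeleton itself is routine once the gluing map $g$ has been correctly identified.
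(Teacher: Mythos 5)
Your proposal is correct and follows essentially the same route as the paper's own proof: identify the gluing map $g$ on $M_{31}\cup M_{32}$, check its well-definedness via \textbf{(A3)}, and then produce mutually inverse maps by invoking the universal property of Lemma \ref{LEM: universal property of adjunction space} on both $\textbf{M}$ and the two-step space. The only difference is that you spell out, via the uniqueness clauses, the final verification that the two induced maps are inverse to one another, which the paper leaves as a ``simple exercise.''
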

\begin{proof}
Let $\textbf{N}$ denote the adjunction of $M_1$ and $M_2$ along the open subspace $M_{12}$. We will use $\varphi_1$ and $\varphi_2$ to denote the canonical embeddings of $M_1$ and $M_2$, and brackets $\llbracket \cdot, \cdot \rrbracket$ to denote points in $N$. Consider the subspace $A:= M_{13}\cup M_{23}$, viewed as a subspace of $M_3$. We define a function $f: A \rightarrow \textbf{N}$ by 
$$f(x) = \begin{cases} \llbracket f_{31}(x),1 \rrbracket & \text{if} \ x\in M_{13} \\ \llbracket f_{32}(x),2 \rrbracket & \text{if} \ x\in M_{23}   \end{cases}$$
The function $f$ is well-defined since any element $x$ in the intersection $M_{13}\cap M_{23}$ will lie in $M_{12}$, thus $f_{12}(x) = f_{31} \circ f_{23}(x)$. Diagrammatically, our argument thus far can be depicted as follows, 
\begin{center}
\begin{tikzcd}[row sep=3.2em, column sep=3.3em]
                                        & M_{12} \arrow[rd] \arrow[r]  & M_1 \arrow[rd] \arrow[r, "\varphi_1", dashed]                                                       & \textbf{N} \arrow[rd, "\varphi_a", dashed, bend left] &                                                                            \\
M_{123} \arrow[ru] \arrow[r] \arrow[rd] & M_{13} \arrow[ru] \arrow[rd] & M_2 \arrow[ru, "\varphi_2"', near end, dashed] \arrow[r] & \textbf{M} \arrow[r, "\alpha", dotted, bend left]     & \textbf{N}\cup_{f}M_3 \arrow[l, "\beta", dotted, bend left] \\
                                        & M_{23} \arrow[ru] \arrow[r]  & M_3 \arrow[rru, "\varphi_b
                                        "', dashed, bend right] \arrow[ru]                                     &                                                       &                                                                           
\end{tikzcd}
\end{center}

where here the maps $\chi_i$ and $\varphi_i$ are all the canonical embedding maps of the various (binary) adjunction spaces. Observe that all three $M_i$ naturally embed into the space $\textbf{N}\cup_{f} M_3$. Indeed: for $M_1$ and $M_2$ we may use the compositions $\varphi_a \circ \chi_i$, and for $M_3$ we may simply use $\varphi_b$. Moreover, by construction these maps commute with all of the $\iota_{ij}$ and $f_{ij}$ maps. As such, we may employ the universal property \ref{LEM: universal property of adjunction space} of $\textbf{M}$ to conclude that there exists a unique continuous map $\alpha: \textbf{M} \rightarrow \textbf{N}\cup_{f}M_3$. Explicitly, this map will act as:
$$  \alpha([x,i]) = \begin{cases} \big[[x,1],a \big] & \textrm{if} \ x\in M_1 \\ 
\big[[x,2],a \big] & \textrm{if} \ x\in M_2 \\
\big[[x,3],b \big] & \textrm{if} \ x\in M_3 \end{cases}.        $$
We can use the maps $\eta: \textbf{N} \rightarrow \textbf{M}$ and $\phi_3: M_3 \rightarrow \textbf{M}$ together with the universal property of the binary adjunction space $\textbf{N}\cup_{f}M_3$ to conclude that there is a unique map $\tilde{\alpha}:\textbf{N}\cup_{f}M_3 \rightarrow \textbf{M}$. A simple exercise confirms that the maps $\alpha$ and $\tilde{\alpha}$ are inverses of each other. \end{proof}

\begin{theorem}\label{THM: Fibred product is Inductive Product}
Suppose that the diagram $\mathcal{F}$ appears in an Abelian category in inverse form. Then the general fibred product $\prod_{\mathcal{F}} A_i$ is isomorphic to an inductive limit.
\end{theorem}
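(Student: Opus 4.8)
The plan is to run an induction on the size $n$ of the indexing set $I$, directly mirroring the inductive-colimit decomposition of Theorem \ref{THM: inductive colimit} and its appendix companion Lemma \ref{LEM:Appendix inductive colimit}. Throughout I write $A_i$ for the objects of the inverse diagram, $A_{ij}$ for the objects indexed by the gluing data, and $\iota_{ij}^* : A_i \to A_{ij}$, $f_{ij}^* : A_j \to A_{ij}$ for the reversed arrows; the general fibred product $\prod_{\mathcal{F}} A_i$ is the subobject of $\bigoplus_i A_i$ cut out by the pairwise relations $\iota_{ij}^* a_i = f_{ij}^* a_j$. Since pullbacks always exist in an Abelian category, the inductive limit
$$ P_n := P_{n-1} \times_{B_{n-1}} A_n $$
is well-defined once I specify the base object $B_{n-1}$ together with its two structure maps. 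I take $B_{n-1}$ to be the fibred product $\prod_{i<n} A_{in}$ of the gluing objects, which is itself an inductive limit by the induction hypothesis applied to the sub-diagram carried by the gluing region $\bigcup_{i<n} M_{in}$, so that the recursion is internally consistent.

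Rather than chase elements, I would argue via the universal property, since both candidate objects are finite limits and are therefore determined by the functor of cones they represent. A cone over the inverse diagram with vertex $X$ is a family $q_i : X \to A_i$ satisfying $\iota_{ij}^* q_i = f_{ij}^* q_j$ for all $i,j$, and $\prod_{\mathcal{F}} A_i$ is by definition the universal such cone. The heart of the argument is to show that $P_n$ represents the same functor. First I would define the structure map $P_{n-1} \to B_{n-1}$ by $(q_1,\dots,q_{n-1}) \mapsto (\iota_{in}^* q_i)_{i<n}$ and the map $A_n \to B_{n-1}$ by $q_n \mapsto (f_{in}^* q_n)_{i<n}$, and verify that both genuinely factor through the fibred product $B_{n-1}$: this is exactly where the cocycle condition \textbf{(A3)} of Definition \ref{DEF: Adjunctive System} enters, guaranteeing that the components agree on the higher overlaps $A_{ijn}$. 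Then, invoking the universal property of the pullback together with the induction hypothesis, a map $X \to P_n$ unpacks as a compatible family $q_1,\dots,q_{n-1}$ (the pairwise relations among the first $n-1$ indices) together with a single map $q_n$ whose agreement with $(q_i)_{i<n}$ inside $B_{n-1}$ is precisely the remaining batch of relations $\iota_{in}^* q_i = f_{in}^* q_n$. Assembling these yields exactly a cone over the full inverse diagram, so $P_n$ and $\prod_{\mathcal{F}} A_i$ represent the same functor and are therefore canonically isomorphic.

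The main obstacle I anticipate is purely bookkeeping, but genuinely delicate: confirming that the base object $B_{n-1}$ neither loses nor adds relations. On the one hand, the agreement condition in $B_{n-1}$ must decompose into exactly the $n-1$ missing pairwise relations and nothing more; on the other hand, the higher-overlap constraints that are built into $B_{n-1}$ (and which do not even appear among the defining relations of the flat product $\prod_{\mathcal{F}} A_i$) must be shown to be \emph{consequences} of the pairwise relations, once again via \textbf{(A3)}. Making this redundancy precise is the step that requires care; once it is in place, the isomorphism of functors — and hence, by the Yoneda lemma, the isomorphism $P_n \cong \prod_{\mathcal{F}} A_i$ — is immediate, and writing down the explicit ``unnesting'' assignment $((q_1,\dots,q_{n-1}),q_n) \mapsto (q_1,\dots,q_n)$ in the spirit of Lemma \ref{LEM:Appendix inductive colimit} furnishes a concrete inverse.
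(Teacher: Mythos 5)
Your proposal is correct and follows essentially the same route as the paper: an induction that realises $\prod_{\mathcal{F}}A_i$ as the iterated pullback $P_{n-1}\times_{B_{n-1}}A_n$ with the base $B_{n-1}$ taken to be the fibred product of the gluing objects (the paper's $B=A_{13}\times_{A_{123}}A_{23}$ in the $n=3$ case), with the isomorphism produced by matching universal properties. Your explicit attention to where the cocycle condition \textbf{(A3)} is needed to make the structure maps land in $B_{n-1}$ is a point the paper leaves implicit, but the argument is the same.
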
   
\begin{proof}
    We proceed via induction. Suppose first that the indexing set $I$ has size $3$. We will show that $\textbf{A}$ is isomorphic to the two-step construction $(A_1 \times_{A_{12}} A_2)\times_B A_3$, where $B:=A_{13}\times_{A_{123}} A_{23}$. The following diagram depicts the schematics of the result.   
\begin{center}
\begin{tikzcd}[row sep=1.4em, column sep=1.4em]
        &  & A_{12} \arrow[lldd]   &  & A_1 \arrow[ll] \arrow[lldd]   &  & A_1 \times_{A_{12}} A_2 \arrow[ll, dashed] \arrow[lldd, dashed]              &                                                                                                                                      \\
        &  &                       &  &                               &  &                                                                              &                                                                                                                                      \\
A_{123} &  & A_{13} \arrow[ll]     &  & A_2 \arrow[lluu] \arrow[lldd] &  & \textbf{A} \arrow[lldd] \arrow[ll] \arrow[lluu] \arrow[r, dotted, bend left] & (A_1 \times_{A_{12}} A_2) \times_B A_3 \arrow[luu, dashed, bend right] \arrow[llldd, dashed, bend left] \arrow[l, dotted, bend left] \\
        &  & B \arrow[u] \arrow[d] &  &                               &  &                                                                              &                                                                                                                                      \\
        &  & A_{23} \arrow[lluu]   &  & A_3 \arrow[lluu] \arrow[ll]   &  &                                                                              &                                                                                                                                     
\end{tikzcd}

\end{center}
    We may use the universal property of $B$ to construct two maps $\alpha: A_1 \times_{A_{12}} A_2 \rightarrow B$ and $\beta: A_3 \rightarrow B$. Firstly, $\alpha$ is defined by applying the universal property to the two maps that following the sequence  $A_1\times_{A_{12}} A_2 \rightarrow A_i \rightarrow A_{i3} \rightarrow A_{123}$. The map $\beta$ is defined by applying the universal property of $B$ to the two maps following the sequence $A_3 \rightarrow A_{i3} \rightarrow A_{123}$. The two-step fibred product may then be defined as $$   (A_1 \times_{A_{12}} A_2)\times_B A_3 := \{  (a,b) \in A_1 \times_{A_{12}} A_2) \times A_3 \ | \ \alpha(a) = \beta(b)\}.     $$
    We may then use the same argument as that of the previous Lemma and use the universal properties of both $\textbf{A}$ and $(A_1 \times_{A_{12}} A_2)\times_B A_3$ to construct the desired isomorphism. For the inductive argument, we can proceed in essentially the same manner as that of Theorem \ref{THM: inductive colimit}, except that this time we reverse all the arrows and take limits instead.         
\end{proof}

\subsection{The Cover-Independent Mayer-Vietoris Sequence}
In Section 3.1 we determined the Mayer-Vietoris sequence for Čech cohomology relative to an open cover $\mathcal{U}$. We will now briefly justify a similar relationship between the Čech cohomologies that arise via direct limits. Suppose that $\mathcal{V}=\{V_\beta\}_{\beta \in B}$ is a refinement of the open cover $\mathcal{U} = \{U_{\alpha} \}_{\alpha \in A}$. This refinement can be formalised using a map $\lambda:B\rightarrow A$ defined by the property $V_\beta \subseteq U_{\lambda(\beta)}$ for all $\beta$ in $B$. There may be many such maps for the same refinement, however the following argument is independent of the particular choice of $\lambda$.

\begin{lemma}
    The refinement map $\lambda$ commutes with both $\Phi^*$ and $\iota^*_{12}-f_{12}^*$. 
\end{lemma}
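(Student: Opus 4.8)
The plan is to realise $\lambda$ as a \emph{family} of cochain maps that slot into naturality squares with $\Phi^*$ and with $\iota_{12}^*-f_{12}^*$, and then to verify each square by a direct index-chase. First I would make the induced data explicit. The single index map $\lambda: B \to A$ satisfying $V_\beta \subseteq U_{\lambda(\beta)}$ refines every cover in sight simultaneously: since preimages preserve inclusions, $\phi_i^{-1}(V_\beta) \subseteq \phi_i^{-1}(U_{\lambda(\beta)})$ shows that $\lambda$ also refines $\mathcal{U}^i$ by $\mathcal{V}^i$, and the same holds for $\mathcal{U}^{12}$ by $\mathcal{V}^{12}$ using $\iota_{12}$ and $f_{12}$. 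Thus one and the same $\lambda$ induces refinement cochain maps on $\check{C}^q(\textbf{M})$, on each $\check{C}^q(M_i)$, and on $\check{C}^q(M_{12})$, each acting by the standard rule $(\lambda\check{f})_{\beta_0\cdots\beta_q} = \check{f}_{\lambda(\beta_0)\cdots\lambda(\beta_q)}$ restricted to the smaller intersection.

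The second step is the key computation. Both $\Phi^*$ and the difference map are assembled from pullbacks $\phi_i^*$, $\iota_{12}^*$, $f_{12}^*$, each of which acts on a cochain by precomposition with a fixed continuous map $g$ (namely $\phi_i$, $\iota_{12}$, or $f_{12}$) without disturbing the index labels. For any such $g$ and any cochain $\check{f}$ one has
\begin{equation*}
\big(\lambda (g^*\check{f})\big)_{\beta_0\cdots\beta_q} = (g^*\check{f})_{\lambda(\beta_0)\cdots\lambda(\beta_q)} = \check{f}_{\lambda(\beta_0)\cdots\lambda(\beta_q)}\circ g = \big(g^*(\lambda\check{f})\big)_{\beta_0\cdots\beta_q},
\end{equation*}
where every function is understood to be restricted to the relevant $\mathcal{V}$-intersection. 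The middle equality is the crux: reindexing via $\lambda$ touches only the source labels, while precomposing with $g$ touches only the target values, so the two operations act on disjoint pieces of the data and commute. Applying this identity with $g=\phi_1$ and $g=\phi_2$ yields the square for $\Phi^*$; applying it with $g=\iota_{12}$ on the first summand and $g=f_{12}$ on the second, together with the additivity of $\lambda$, yields the square for $\iota_{12}^*-f_{12}^*$.

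The remaining step is pure bookkeeping on domains: one checks that restricting $\check{f}_{\lambda(\beta_0)\cdots\lambda(\beta_q)}$ to $U^i_{\lambda(\beta_0)\cdots\lambda(\beta_q)}$ under $g^*$ and then to $V^i_{\beta_0\cdots\beta_q}$ produces the same locally-constant function as restricting after refining, which is legitimate precisely because $V_{\beta_0\cdots\beta_q}\subseteq U_{\lambda(\beta_0)\cdots\lambda(\beta_q)}$ forces the corresponding inclusion of preimages. I do not anticipate a genuine obstacle, since the statement is a naturality assertion between two operations that act independently; the only point requiring care is keeping the four induced covers $\mathcal{U}^i,\mathcal{V}^i,\mathcal{U}^{12},\mathcal{V}^{12}$ and their shared index map $\lambda$ consistent, so that a single $\lambda$ legitimately serves all the relevant cochain complexes at once and the displayed identity is applied over the correct domains.
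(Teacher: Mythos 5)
Your proposal is correct and follows essentially the same route as the paper: the core step in both is the observation that the refinement map reindexes the cover labels while the pullbacks $\phi_i^*$, $\iota_{12}^*$, $f_{12}^*$ act by precomposition on values, so the two operations commute termwise. Your added care about the induced covers $\mathcal{U}^i,\mathcal{V}^i,\mathcal{U}^{12},\mathcal{V}^{12}$ sharing the single index map $\lambda$ is a point the paper leaves implicit, but it does not change the argument.
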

\begin{proof}
    Let $\check{f}$ be a cochain in $\check{C}^q(\textbf{M}, \mathcal{V})$. For any function $f_{\beta_0 \cdots \beta_q}$ in $\check{f}$, we have $$  \lambda(\phi_i^*\check{f})_{\beta_0 \cdots \beta_q} = \lambda (f_{\beta_0 \cdots \beta_q} \circ \phi_i) = f_{\lambda(\beta_0)\cdots\lambda(\beta_q)} \circ \phi_i = \phi_i^*(\lambda \check{f}).       $$ 
    Then $$ \lambda \circ \Phi^* \check{f} = (\lambda \circ \phi_1^* \check{f}, \lambda \circ \phi_2^* \check{f}) =  (\phi_1^* \circ \lambda \check{f}, \phi_2^* \circ \lambda\check{f}) = \Phi^* \circ \lambda(\check{f}).  $$
    The case for $\iota_{12}^*$ and $f_{12}^*$ are similar. 
\end{proof}
A standard computation confirms that the refinement map $\lambda$ commutes with the Čech differential \cite{bott1982differential}. This, together with the previous result, allows us to view $\lambda$ as a chain map. We may then apply the Snake Lemma, together with the descended map $\lambda$, to obtain the following commutative relationship between the two cover-dependent Mayer-Vietoris sequences. 
\begin{center}
\adjustbox{scale=0.87}{
\begin{tikzcd}[row sep=3.9em]
\cdots \arrow[r, "\delta^*"]  & {\check{H}^q(\textbf{M}, \mathcal{U})} \arrow[r, "\Phi^*"]                       & {\check{H}^q(M_1, \mathcal{U}^1)\oplus \check{H}^q(M_2, \mathcal{U}^2)} \arrow[r, "\iota_{12}^* - f_{12}^*"]                     & {\check{H}^q(M_{12}, \mathcal{U}^{12})} \arrow[r, "\delta^*"]                       & \cdots \\
\cdots \arrow[r, "\delta^*"'] & {\check{H}^q(\textbf{M}, \boldsymbol{\mathcal{V}})} \arrow[r, "\Phi^*"'] \arrow[u, "\lambda"] & {\check{H}^q(M_1, \mathcal{V}^1)\oplus \check{H}^q(M_2, \mathcal{V}^2)} \arrow[r, "\iota_{12}^*-f_{12}^*"'] \arrow[u, "\lambda"] & {\check{H}^q(M_{12}, \mathcal{V}^{12})} \arrow[u, "\lambda"] \arrow[r, "\delta^*"'] & \cdots
\end{tikzcd}}
\end{center}

Note that the map $\lambda$ commutes with the connecting homomorphisms $\delta^*$ due to the naturality of the Snake Lemma. The cover-independent Čech cohomologies are direct limits that satisfy certain universal properties. Since the refinement maps $\lambda$ form commutative diagrams pictured above, we may invoke these universal properties of directed limits (by using the right choice of compositions) in order to construct a unique Mayer-Vietoris sequence 
$$     \cdots \xrightarrow{\delta^*} \check{H}^q(\textbf{M}) \xrightarrow{\Phi^*} \check{H}^q(M_1) \oplus \check{H}^q(M_2) \xrightarrow{\iota_{12}^* - f_{12}^*} \check{H}^q(M_{12}) \xrightarrow{\delta^*} \check{H}^{q+1}(\textbf{M}) \rightarrow \cdots                 $$
as required.

\end{document}